\newcommand{\D}{{\mathop{}\!\mathrm{d}}} 
\newcommand{\R}{\mathbb{R}}
\newcommand{\N}{\mathbb{N}}
\newcommand{\loc}{\operatorname{loc}}
\newcommand{\PP}{\mathbb{P}}
\newcommand{\E}{\mathbb{E}}
\newcommand{\T }{\mathcal{T}}
\newcommand{\cA}{\mathcal{A}}
\newcommand{\one}{ 1 \hspace{-3pt} \mathrm{l}} %
\newcommand{\aloc}{{a_{\operatorname{loc}}}}
\newcommand{\Omloc}{{\Omega_{\operatorname{loc}}}}
\newcommand{\ab}{{\mathbf{a}}}
\numberwithin{equation}{section}  
\newtheorem{defn}{Definition}[section]
\newtheorem{rem}[defn]{Remark}
\newtheorem{thm}[defn]{Theorem}
\newtheorem{prop}[defn]{Proposition}
\newtheorem{cor}[defn]{Corollary}
\newtheorem{lem}[defn]{Lemma}
\newtheorem{asu}[defn]{Assumption}
\definecolor{bb}{RGB}{0,0,255}
\def\@settitle{%
  \begin{center}%
    \baselineskip14\p@\relax
    \normalfont\LARGE\bfseries 
    \@title
  \end{center}%
}
\title[Non-concave control under model uncertainty]{Non-concave stochastic optimal control in finite discrete time under model uncertainty}
\author[A. Neufeld, J. Sester]{ Ariel Neufeld$^{1}$, Julian Sester$^{2}$}
\begin{document}

\vspace*{-1.0cm}
\maketitle

\begin{center}
\normalsize{\today} \\ \vspace{0.5cm}
\small\textit{$^{1}$NTU Singapore, Division of Mathematical Sciences,\\ 21 Nanyang Link, Singapore 637371.\\
$^{2}$National University of Singapore, Department of Mathematics,\\ 21 Lower Kent Ridge Road, 119077.}                                                                                                                              
\end{center}

\begin{abstract}~
In this article we present a general framework for non-concave 
robust stochastic control problems 
{under model uncertainty} 
in a discrete time finite horizon setting. 
Our framework allows to consider a variety of different path-dependent ambiguity sets of probability measures comprising, as a natural example, the ambiguity set defined via Wasserstein-balls around path-dependent reference measures {with path-dependent radii}, as well as parametric classes of probability distributions. We establish a dynamic programming principle which allows to derive both optimal control and worst-case measure by solving recursively a sequence of one-step optimization problems. 
{Moreover, we derive upper bounds for the difference of the values of the robust and non-robust stochastic control problem in the Wasserstein uncertainty and parameter uncertainty case.}

As a concrete application, we study the robust hedging problem of {financial derivatives} under an asymmetric (and non-convex) loss function accounting for different preferences of sell- and buy side when it comes to the hedging of financial derivatives. As our entirely data-driven ambiguity set of probability measures, we consider Wasserstein-balls around the empirical measure derived from real financial data. We demonstrate that during adverse scenarios such as a financial crisis, our robust approach outperforms typical model-based hedging strategies such as the classical \emph{Delta-hedging} strategy as well as the hedging strategy obtained in the non-robust setting with respect to the empirical measure and therefore overcomes the problem of model misspecification in such critical periods. 
\\

\noindent
{\bf Keywords:} {model uncertainty,  distributionally robust stochastic control, Wasserstein uncertainty, parameter uncertainty, dynamic programming, hedging, data-driven optimization}
\end{abstract}

{
\section{Introduction}
Consider an agent executing a sequence of actions based on the observation she makes in her random environment 
with the goal to maximize her expected cumulative payoff at a predetermined future maturity date.  
The agent may take decisions based on the current and past observations of the environment,  
as well as based on her past executions. 
Sequential decision making problems of this type are usually referred to as (non-Markovian)
\emph{stochastic control} problems and 
  have found a wide range of applications including many 
  in physics 
  (e.g., \cite{freund2000stochastic, guerra1983quantization, milstein2004stochastic, yasue1981quantum}), 
  economics 
  (e.g., \cite{chow1970optimal,fleming2004application, kendrick2005stochastic, leonard1992optimal, stokey2008economics}), 
  and finance 
  (e.g., \cite{benth2001existence, bertsekas1996stochastic, pham2009continuous, touzi2002stochastic, touzi2012optimal, zariphopoulou2001stochastic}), to name but a few.

However, modeling the underlying probability distribution of (the transition of)
the environment 
in stochastic control problems is, in practice, extraordinarily difficult, as the model choice is subject to a high degree of possible model misspecification, typically referred to as \emph{Knightian uncertainty} (\cite{knight1921risk}).
To account for Knightian uncertainty, 
one
typically 
considers a \textit{set} of probability measures, instead of a singleton, which represents the candidates for the true but to the agent unknown probability measure; we refer to \cite{hansen2008robustness} and the references therein for a detailed discussion on model uncertainty.
%
 
Following the seminal work by \cite{gilboa1989maxmin}, the agent then chooses a sequence of actions in order to  \textit{robustly} optimize her expected payoff at maturity with respect to the worst-case probability measure among those in the ambiguity set. 

There are two natural approaches to define the set of probability measures describing the ambiguity. In the first approach, one starts with a \emph{reference probability measure} that can be regarded as the best guess or estimate of the stochastic behavior of the environment. Based on this reference probability measure one then considers  
all probability measures that are in a certain sense close to the reference probability measure, where proximity is typically  measured via a distance on the set of probability measures. The corresponding decision problem in this framework is known as \textit{distributionally robust optimization} (DRO).
This approach can be seen as a purely data-driven approach whenever the corresponding reference measure is estimated solely using data (e.g.\ using the empirical measure).
 We refer to \cite{ben-tal2013robust, erdougan2006ambiguous, mohajerin2018data} for the seminal works in DRO with respect to different distances. 
In the second approach, one fixes a family of parametric distributions where one assumes that the true but unknown law is an element of this family of distributions, but the corresponding parameters are unknown. This is known as \textit{parametric uncertainty}, see, e.g., 
\cite{easley1988controlling, horst2022portfolio, vitus2015stochastic}.

In this paper, we analyze a general multi-period stochastic control problem  under model uncertainty in finite discrete time. Hereby, the corresponding set of probability measures describing model uncertainty are allowed to be \textit{non-dominated}.

Multi-period stochastic control problems under model uncertainty in finite discrete time  with respect to non-dominated probability measures describing the ambiguity have been well-studied in recent years, especially in the financial context of robust utility maximization. Using the general framework of \cite{BouchardNutz} describing model uncertainty in a discrete time multi-period financial market together with its robust arbitrage characterization, \cite{nutz2016utility} first analyzed the robust utility maximization in a  frictionless market with respect to a utility function bounded from above where the set of investment strategies are restricted to those retaining nonnegative wealth. This result in the same setting has then been extended in \cite{blanchard2018multiple}, allowing for unbounded utilities. Later, \cite{carassus2023discrete} generalized \cite{blanchard2018multiple, nutz2016utility} by allowing strategies which could  lead to negative wealth during the trading period. While \cite{ blanchard2018multiple, carassus2023discrete, nutz2016utility} analyzed solely the primal problem and solved the utility maximization problem using dynamic programming (DPP), \cite{bartl2019exponential} combined duality arguments together with DPP methods to solve the robust utility maximization problem with respect to the exponential utility function also in a frictionless market.  \cite{neufeld2018robust} analyzed a general multi-period stochastic control problem  under model uncertainty in finite discrete time which is suited to analyze utility maximization in financial markets with frictions employing dynamic programming. Moreover, \cite{bartl2019robust} obtained existence of optimal strategies for the robust utility maximization problem with random endowment without employing dynamic programming and hence do not require a time-consistency 
assumption on the ambiguity set of probability measures, but instead work under the set-theoretic assumption that medial limits exist, an assumption which cannot be proved under the usual axioms of the Zermelo–Fraenkel set theory with the axiom of choice (ZFC). Furthermore, \cite{rasonyi2021utility} obtained existence of optimal investment strategies for the robust utility maximization under an alternative framework of model uncertainty described by a set of stochastic processes. We highlight that all the above literature \cite{bartl2019exponential, bartl2019robust, blanchard2018multiple, carassus2023discrete, neufeld2018robust, nutz2016utility, rasonyi2021utility} assume that the corresponding utility function is \textit{concave}.

For \textit{non-concave} utilities, \cite{neufeld2019nonconcave} analyzed a general multi-period stochastic control problem  under model uncertainty in finite discrete time, but under the restrictive assumption that the optimal strategies can only take values on a grid (in particular discrete set). Recently, \cite{carassus2024nonconcave} analyzed the non-concave robust utility maximization problem in a multi-period finite discrete time setting under the assumption that the additional set-theoretic assumption called \textit{Projective Determinancy} holds, an axiom which cannot be proved to hold under the ZFC. Moreover, \cite{bayraktar2023nonparametric, bayraktar2022data} analyzed adaptive-robust control problems, where the loss function is allowed to be non-concave. All the works \cite{bayraktar2023nonparametric, bayraktar2022data,carassus2024nonconcave,neufeld2019nonconcave} apply dynamic programming to solve the corresponding stochastic control problem. Furthermore, \cite{bauerle2021q,chen2019distributionally,neufeld2023mdp,uugurlu2018robust,xu2010distributionally} analyzed Markov decision problems under model uncertainty where the corresponding reward function is allowed to be non-concave. This corresponds to a \textit{Markovian} stochastic control problem in an \textit{infinite horizon} setup.

The goal of this paper is to provide a general framework for stochastic optimal control problems under model uncertainty \textit{allowing for non-concave utility functions}. Compared to the literature above (especially in the concave utility setup), we impose more regularity on the loss function and compactness assumptions on the set of kernels describing the ambiguity as well as on the set of strategies. 
We refer to Section~\ref{sec:Comparison} for a more detailed discussion on the conditions imposed on the utility function and the set of priors also in comparison to the existing literature described above.

That said, our  setup then allows  to solve the robust stochastic control problem also in the non-concave setting using dynamic programming, without requiring any additional set-theoretic assumption nor assuming that the set of strategies takes values on a grid. 
More precisely, we establish  in Theorem~\ref{thm_main_result} a dynamic programming principle  which allows us to obtain both optimal control and worst-case measure by solving recursively a sequence of one step optimization problems. 

From a technical point of view, we introduce a new stability condition on the ambiguity sets of probability measures (see Assumption~\ref{asu_P}~(iii)) which then, together with an application of Berge's maximum principle \cite{berge}, allows to derive the main result described above.

We demonstrate  in Theorem~\ref{thm_wasserstein_ambiguity} and Theorem~\ref{thm_param_assumptions} that our general assumptions imposed on the set of priors (see Assumption~\ref{asu_P}) are naturally fulfilled by important classes of ambiguity sets such as Wasserstein-balls around path-dependent reference probability measures with path-dependent radii, as well as parametric classes of probability distributions.

Furthermore, we derive in Theorem~\ref{thm:RobustVSNonRobust} upper bounds for the difference of the values of the robust and non-robust stochastic control problem in the Wasserstein uncertainty and parameter uncertainty case.}

As a concrete application, 
we study {in Section~\ref{sec_applications}}
the robust hedging of {financial derivatives} under an asymmetric (and non-convex) loss function accounting for different preferences of sell- and buy side when it comes to the hedging of financial derivatives (\cite{broll2010prospect}, \cite{carbonneau2023deep}, and  \cite{gobet2020option}).
As our entirely data-driven ambiguity set of probability measures, we consider Wasserstein-balls around the empirical measure derived from real financial data. 
We demonstrate that during adverse scenarios such as a financial crisis, our robust approach 
outperforms typical model-based hedging strategies such as the classical \emph{Delta-hedging} strategy as well as the hedging strategy obtained in the non-robust setting with respect to 
the empirical measure  
and therefore overcomes the problem of model misspecification in such critical periods. To provide evidence, we test 
on data from the peak of the financial disruptions during the  COVID-19 crisis in March $2020$ which constitutes a recent example for a major adverse distribution change in financial markets.

The remainder of this paper is as follows. In Section~\ref{sec_setting} we introduce the setting of our 
stochastic control problem under model uncertainty. Section~\ref{sec_dynamic_programming} contains 
the dynamic programming principle guaranteeing the existence of an  optimizer as well as a worst-case probability measure for our  stochastic control problem under model uncertainty.
In Section~\ref{sec_ambiguity} we present several possible ambiguity sets of probability measures  and several sets of possible controls satisfying our assumptions. 
{In
Section~\ref{sec_RobustVSNonRobust} we establish the upper bounds for the difference of the values of robust and non-robust stochastic control problems. 
In Section~\ref{sec:Comparison} we compare in details the assumptions and results of this paper with existing literature and discuss our contribution to the literature.
{Section~\ref{sec_applications} presents as an application a robust hedging problem under an asymmetric loss function which we solve numerically.} 
The proofs of all results of this paper are provided in Section~\ref{sec_proofs}. 
}

\section{Setting}\label{sec_setting}

To formulate our optimization problem, we consider a fixed and finite time horizon $T \in \N$ as well as a closed set $\Omloc \subseteq \R^d$ for some $d \in \N$. Then, we introduce the set 
\[
\Omega^t:= \underbrace{\Omloc \times \cdots \times \Omloc}_{t-\text{ times }},\qquad t=1\dots,T,
\]
and we define a filtration $(\mathcal{F}_t)_{t=0,\dots,T}$ by setting $\mathcal{F}_t:=\mathcal{B}(\Omega^t)$ for $t=1,\dots,T$ as well as $\mathcal{F}_0:= \{ \emptyset, \Omega\}$, where we abbreviate 
\[
(\Omega, \mathcal{F}):=(\Omega^T, \mathcal{F}_T).
\]
%
%
{ At each time $t=0,\dots,T-1$, the agent takes an action in order to optimize the stochastic control problem.
	%
We consider at each time $t=0,\dots,T-1$ a set of (possibly path-dependent) controls $\mathcal{A}_t$ corresponding to the actions an agent can take at  time $t$.
	\begin{asu}\label{asu_A}
		For every $t\in \{1,\dots,T-1\}$ we assume the following.
		\begin{itemize}
			\item[(i)]
			Let $m_t \in \N$. The correspondence\footnote{Given two spaces $\mathcal{X}, \mathcal{Y}$, we refer to $\Lambda:\mathcal{X}\twoheadrightarrow \mathcal{Y}$ as \textit{correspondence} or \textit{set-valued map}  if
				$\Lambda(x)\subseteq \mathcal{Y}$ for each $x \in \mathcal{X}$.}
				 $\Omega^t \ni 
			\omega^t \twoheadrightarrow \mathcal{A}_t(\omega^t) \subseteq \R^{m_t}$ is compact-valued, upper-hemicontinuous\footnote{We refer to \cite[Chapter~17]{Aliprantis} for the notions \textit{upper-hemicontinuous} and \textit{lower-hemicontinuous} for correspondences.}, and non-empty. 
			\item[(ii)]
			There exists some $L_{\mathcal{A},t}\geq 0$ such that for all $\omega^{t}:= (\omega_1,\dots,\omega_t) \in \Omega^t$, $\widetilde{\omega}^{t}:= (\widetilde{\omega}_1,\dots,\widetilde{\omega}_t) \in \Omega^t$ and for all $a \in \mathcal{A}_t\left(\omega^{t}\right)$ there exists some $\widetilde{a} \in \mathcal{A}_t\left(\widetilde{\omega}^{t}\right)$ satisfying
			\begin{equation}\label{eq_condition_A}
				\|a-\widetilde{a}\|\leq L_{\mathcal{A},t}\left( \sum_{i=1}^t \|\omega_i - \widetilde{\omega}_i \|  \right).
			\end{equation}
			\item[(iii)] Let $m_0 \in \N$. The set $\mathcal{A}_0 \subseteq \R^{m_0}$ is non-empty and compact.
		\end{itemize}
		%
			%
			%
		\end{asu}
		\begin{rem} \label{rem:control}
By Lemma~\ref{lem:Lipschitz-implies-LHC}, Assumption~\ref{eq_condition_A}~(ii) implies that for every $t\in \{1,\dots,T-1\}$ the correspondence $\Omega^t \ni 
		\omega^t \twoheadrightarrow \mathcal{A}_t(\omega^t) \subseteq \R^{m_t}$ is also lower-hemicontinuous. 
		\end{rem}

		Then, we define the set of 
		controls $\mathfrak{A}$ by 
		\begin{align}
			\label{eq:def:controls}
			\mathfrak{A}:= \bigg\{ \mathbf{a}=(a_t)_{t=0,\dots,T-1}~\bigg|~ a_0 \in \mathcal{A}_0,
			\text{ and for all $t=1,\dots,T-1$: }~a_t: \Omega^t \rightarrow \R^{m_t} \text{ $\mathcal{F}_t$-meas.},\  
			a_t(\cdot) \in \mathcal{A}_t(\cdot)
			\bigg\}.
		\end{align}
		Furthermore, for any $s,t\in \{1,\dots, T\}$ with $s-1\leq t$ we denote
		\begin{equation}\label{notation_graph}
			\Omega^t \times \mathcal{A}^s:= \Big\{\big((\omega_1,\dots,\omega_t),(a_0,\dots, a_{s-1})\big) \in \Omega^t \times \bigtimes_{i=0}^{s-1}\R^{m_i} \, \Big|\,  a_0 \in \mathcal{A}_0, a_i \in \mathcal{A}_i(\omega_1,\dots,\omega_i) \ \forall i=1,\dots, s-1\Big\}.
		\end{equation}
	} 
	
	To define our optimization problem, we consider
	a (utility) function $\Psi: \Omega \times \mathcal{A}^T \rightarrow \R$ which fulfills the assumptions below.
	\begin{asu}\label{asu_psi}
	Fix $p \in \N_0$. 
	 Then, we assume that the map $\Psi: \Omega \times \cA^T \to \R$
satisfies the following:
		\begin{itemize}
			{
				\item[(i)]
				There exists some $L_{\Psi}\geq 0$ and  $\alpha \in (0,1]$ such that for all $(\omega^T,a^T) = \big((\omega_1,\dots,\omega_{T}),(a_0,\dots,a_{T-1})\big)$, $(\widetilde{\omega}^T, \widetilde{a}^T)= \big((\widetilde{\omega}_1,\dots,\widetilde{\omega}_{T}),(\widetilde{a}_0,\dots,\widetilde{a}_{T-1})\big) \in \Omega\times \mathcal{A}^T$
				we have
				\begin{equation}\label{eq_Lipschitz_1}
					\left|\Psi(\omega^T,a^T)-\Psi(\widetilde{\omega}^T,\widetilde{a}^T)\right| \leq L_{\Psi} \cdot \left( \sum_{i=1}^T\left\|\omega_i-\widetilde{\omega}_i \right\|^{\alpha}+\|a_{i-1}-\widetilde{a}_{i-1}\|^{\alpha}\right).
				\end{equation}
			}
			\item[(ii)]
			There exists some $C_{\Psi}\geq 1$ such that for all $(\omega^T,a^T) = \big((\omega_1,\dots,\omega_{T}),(a_0,\dots,a_{T-1})\big) \in \Omega\times \mathcal{A}^T$ we have
			\[
			\left|\Psi(\omega^T,a^T)\right| \leq C_{\Psi}\cdot \left(1+\sum_{i=1}^{T} \|\omega_i\|^p\right).
			\]
			
		\end{itemize}
	\end{asu}
%
%
Next, we aim to define our set of probability measures which describe the model uncertainty. To that end, we first denote for every $k \in \N$, $\mathbb{X} \subseteq \R^k$, and $q \in \N_0$ by
\[
C_q(\mathbb{X}, \R):=\left\{g \in C(\mathbb{X},\R) ~\middle|~ \|g\|_{C_q(\mathbb{X})}:=\sup_{x \in X }\frac{|g(x)|}{1+\|x\|^q}< \infty\right\}
\]
the set of continuous functions $g: \mathbb{X}\rightarrow \R$ which additionally possess polynomial growth at most of degree $q$,
where $C(\mathbb{X},\R)$ denotes the set of continuous functions from $\mathbb{X}$ to  $\R$, and where here and in the following $\| \cdot \|$ always denotes the Euclidean norm on the respective Euclidean space. 
Typically, we will consider $\mathbb{X}:=\Omloc$. 
Moreover, 
we denote by 
$(\mathcal{M}_1^q(\Omloc),\tau_q)$ 
 the set of all probability measures on 
  $(\Omloc, \mathcal{B}(\Omloc))$
  with finite $q$-th moment\footnote{We write 
  	$\mathcal{M}_1(\Omloc)$ for $\mathcal{M}_1^0(\Omloc)$, i.e., for the set of all probability measures on $(\Omloc, \mathcal{B}(\Omloc))$ with no moment restrictions.} endowed with the topology $\tau_{q}$ 
characterized by 
\begin{equation}\label{eq_convergence_topology_1}
	\mu_n \xrightarrow{\tau_q} \mu \text{ for } n \rightarrow \infty ~\Leftrightarrow~ \lim_{n \rightarrow \infty} \int_{\Omloc} g \,\D \mu_n = \int_{\Omloc} g \,\D \mu \text{ for all } g \in C_q(\Omloc, \R).
\end{equation}
In particular, for $q=0$, the topology $\tau_0$ coincides with the topology of weak convergence, whereas for $q\geq 1$, $\tau_q$ coincides with the topology induced by the $q$-Wasserstein metric $\operatorname{d}_{W_q}(\cdot, \cdot)$. Recall that for $q \in \N$ we have for two probability measures $\mu_1, \mu_2 \in \mathcal{M}_1^q(\Omloc)$ with finite $q$-moment that their Wasserstein-distance of order $q$ is defined as
\[
\operatorname{d}_{W_q} \left(\mu_1, \mu_2 \right) := \inf_{\pi \in \Pi(\mu_1,\mu_2)} \left(\int_{\Omloc\times \Omloc} \|x-y\|^q \pi(\D x, \D y)\right)^{1/q},
\] 
where $\Pi(\mu_1,\mu_2) \subset \mathcal{M}_1^q(\Omloc \times \Omloc)$ denotes the set of all joint distributions of $\mu_1$ and $\mu_2$, i.e., the set of all probability measures on $\Omloc \times \Omloc$ with first marginal $\mu_1$ and second marginal $\mu_2$, compare for more details, e.g., \cite{villani2009optimal}.

We define an ambiguity set of probability measures by first fixing for $t=0$ a set  $\mathcal{P}_0 \subseteq \mathcal{M}_1(\Omloc)$, whereas for all $t=1,\dots,T-1$ we fix a correspondence $\mathcal{P}_t: \Omega^t \twoheadrightarrow\mathcal{M}_1(\Omloc)$.
%
We impose the following conditions on the correspondences $(\mathcal{P}_t)_{t=0,\dots,T-1}$. 
\begin{asu}\label{asu_P}
Let $p \in \N_0$ be the integer from Assumption~\ref{asu_psi}.
Then, for every $t\in \{1,\dots,T-1\}$ we assume the following.
\begin{itemize}
\item[(i)]
The correspondence $\Omega^t \ni 
\omega^t \twoheadrightarrow \mathcal{P}_t(\omega^t) \subseteq \left( \mathcal{M}_1^p(\Omloc), \tau_{p}\right)$ is compact-valued, continuous\footnote{\emph{Continuous} here refers to both lower-hemicontinuous and upper-hemicontinuous, see, e.g., \cite{Aliprantis}.}, and non-empty. Moreover, if $p=0$, we additionally assume that $\mathcal{P}_t(\omega^t) \subseteq \mathcal{M}_1^1(\Omloc)$ for all $\omega^t \in \Omega^t$.
\item[(ii)] There exists some $C_{\mathcal{P},t} \geq 1$ such that for all $\omega^{t}:= (\omega_1,\dots,\omega_t)\in \Omega^{t}$ and for all $\PP \in \mathcal{P}_t\left(\omega^{t}\right)$ we have
\begin{equation}\label{eq_asu_P_ineq_1}
\int_{\Omloc} \|x\|^p \PP(\D x) \leq C_{\mathcal{P},t}\left(1+\sum_{i=1}^t\|\omega_i\|^p\right).
\end{equation}
\item[(iii)]
There exists some $L_{\mathcal{P},t}\geq 0$ such that for all $\omega^{t}:= (\omega_1,\dots,\omega_t) \in \Omega^t$, $\widetilde{\omega}^{t}:= (\widetilde{\omega}_1,\dots,\widetilde{\omega}_t) \in \Omega^t$ and for all $\PP \in \mathcal{P}_t\left(\omega^{t}\right)$ there exists some $\widetilde{\PP} \in \mathcal{P}_t\left(\widetilde{\omega}^{t}\right)$ satisfying
\begin{equation}\label{eq_condition_P}
\operatorname{d}_{W_1}(\PP, \widetilde{\PP} ) \leq L_{\mathcal{P},t} \left( \sum_{i=1}^t \|\omega_i - \widetilde{\omega}_i \|  \right).
\end{equation}
\item[(iv)] The set $\mathcal{P}_0 \subseteq \left( \mathcal{M}_1^p(\Omloc), \tau_{p}\right)$ is non-empty, compact, and there exists $C_{\mathcal{P},0} \geq 1$ such that
\begin{equation}\label{eq_asu_P_ineq_2}
\int_{\Omloc} \|x\|^p \PP(\D x) \leq C_{\mathcal{P},0}
\end{equation}
for all $\PP \in \mathcal{P}_0$.
 \end{itemize}
\end{asu}
%
%

%
We can now define a probability measure $\PP:= \PP_0 \otimes \dots \otimes \PP_{T-1} \in \mathcal{M}_1(\Omega)$ by 
\begin{equation}\label{eq_P_integrals}
\PP(B) =\PP_0 \otimes \dots \otimes \PP_{T-1}(B):=\int_{\Omloc} \dots \int_{\Omloc} \one_B(\omega_1, \dots,\omega_T) \PP_{T-1}(\omega_1,\dots,\omega_{T-1}; \D \omega_T) \dots \PP_0(\D\omega_1),
\end{equation}
for $B \in \mathcal{F}$.
This allows us to define the ambiguity set $\mathfrak{P} \subseteq \mathcal{M}_1(\Omega)$ of admissible probability measures on $\Omega$ by 
\begin{align*}
\mathfrak{P}:= \bigg\{\PP_0 \otimes \dots \otimes \PP_{T-1}~\bigg|~ 
\text{ for all $t=0,\dots,T-1$: }~&\PP_t: \Omega^t \rightarrow \mathcal{M}_1(\Omloc) \text{ Borel - meas.},\  
\PP_t(\cdot) \in \mathcal{P}_t(\cdot)\bigg\}.
\end{align*}
\begin{rem}
{The Kuratowski--Ryll--Nardzewski Theorem} (e.g., \cite[Theorem 18.13]{Aliprantis}) provides for each $t\in \{1,\dots,T-1\}$ the existence of a measurable kernel $\PP_t:\Omega^t \rightarrow \mathcal{M}_1^p(\Omloc)$ such that $\PP_t(\omega^t) \in \mathcal{P}_t(\omega^t)$ for all $\omega^t \in \Omega^t$.
\end{rem}

Our goal is to analyze the following 
stochastic optimal control problem under model uncertainty which consists in solving the max-min problem 
\begin{equation}\label{eq_optimization_problem}
\mathcal{V}:=\sup_{\mathbf{a} \in \mathfrak{A}} \inf_{\PP \in \mathfrak{P}} \E_{\PP} \left[\Psi(a_0,\dots,a_{T-1})\right], 
\end{equation}
i.e., our goal is to choose a control that maximizes the expected valued of $\Psi$ under the \emph{worst case} probability measure.
{
\begin{rem}
Assumption~\ref{asu_A}~(i), Assumption~\ref{asu_psi}, and Assumption~\ref{asu_P}~(i),~(ii),~(iv) are natural conditions imposed on the set of controls, the utility function, and the probability measures characterizing the model uncertainty 
to derive the existence of a
local (i.e., one-step) worst-case measure in \eqref{eq_defn_J_t} and  local optimal control in \eqref{eq_defn_Psi_t} at each time $t$ in the dynamic programming procedure by applying Berge's maximum theorem \cite{berge}, provided that the function $\Psi_{t+1}$ in \eqref{eq_defn_J_t} possesses the same regularity as $\Psi=\Psi_{T}$. Now, to back-propagate the regularity of $\Psi$, we introduce Assumption~\ref{asu_P}~(iii) (and similarly Assumption~\ref{asu_A}~(ii)) which, to the best of our knowledge, has not appeared in the literature yet. We show in Theorem~\ref{thm_wasserstein_ambiguity}, Theorem~\ref{thm_param_assumptions}, and  Proposition~\ref{prop_controls} that these assumptions are \textit{naturally satisfied} for relevant examples of sets of probability measures and controls.  For a detailed discussion of our results and assumptions compared to the existing literature, we also refer to Section~\ref{sec:Comparison}.
\end{rem}
}
\section{Dynamic Programming Principle}\label{sec_dynamic_programming}
In this section we introduce the dynamic programming procedure allowing us to solve \eqref{eq_optimization_problem} 
 by solving recursively a sequence of one-step optimization problems. 

To that end we first set
\[
\Psi_T :\equiv \Psi,
\]
and then define recursively  for all $t=T-1,\dots,1$  the following quantities:\footnote{For any $t\in \{1,\dots,T-1\}$, $\omega^t \in \Omega^t$, ${\omega}_{\loc} \in \Omloc$ we write $\omega^t \otimes_t {\omega}_{\loc} :=(\omega^t ,{\omega}_{\loc}) \in \Omega^{t+1}$.}
\begin{equation}\label{eq_defn_J_t}
\begin{aligned}
\Omega^t \times \mathcal{A}^{t+1} \ni \left(\omega^t,a^{t+1}\right) \mapsto  J_t(\omega^t,a^{t+1}):&= \inf_{\PP \in \mathcal{P}_t(\omega^t)} \E_{\PP}\left[\Psi_{t+1}\left(\omega^t \otimes_t \cdot, a^{t+1}\right)\right]
\\
&= \inf_{\PP \in \mathcal{P}_t(\omega^t)} \int_{\Omloc}\Psi_{t+1}\left((\omega^t,{\omega}_{\loc}), a^{t+1}\right) \PP(\D {\omega}_{\loc}),
\end{aligned}
\end{equation}
as well as 
\begin{equation}\label{eq_defn_Psi_t}
\begin{aligned}
\Omega^t \times \mathcal{A}^{t} \ni \left(\omega^t,a^{t}\right) \mapsto &\Psi_t(\omega^t,a^t):= \sup_{\widetilde{a} \in \mathcal{A}_t} J_t\left(\omega^t, (a^t,\widetilde{a})\right).
\end{aligned}
\end{equation}
Moreover, for $t=0$ we define
\begin{equation}\label{eq_defn_Psi_0}
\begin{aligned}
\mathcal{A}_0\ni a \mapsto J_0(a)&:=\inf_{\PP \in \mathcal{P}_0} \E_{\PP}\left[\Psi_{1}\left(\cdot, a\right)\right],\\
\Psi_0 &:= \sup_{\widetilde{a} \in \mathcal{A}_0} J_0(\widetilde{a}).
\end{aligned}
\end{equation}
In the following theorem we
establish the existence of an optimal control and a worst-case measure for our 
stochastic optimal control problem under model uncertainty \eqref{eq_optimization_problem} as well as a dynamic programming principle.
\begin{thm}\label{thm_main_result}
Suppose that Assumption~\ref{asu_A}, Assumption~\ref{asu_psi}, and Assumption~\ref{asu_P} are fulfilled. Then the following holds.
\begin{itemize}

\item[(i)]
Let $t\in \{1,\dots,T-1\}$. Then, there exists a measurable selector
\begin{equation}\label{eq_thm_iia_def}
\Omega^t \times \mathcal{A}^t \ni (\omega^t,a^t) \mapsto \widetilde{a}_t^* \left(\omega^t, a^t\right) \in \mathcal{A}_t(\omega^t)
\end{equation}
such that for all $(\omega^t,a^t) \in \Omega^t \times \mathcal{A}^t$
\begin{equation}\label{eq_thm_iia}
\Psi_t(\omega^t,a^t) =  J_t\left(\omega^t,\,\left(a^t,\widetilde{a}_t^* \left(\omega^t, a^t\right)\right)\,\right).
\end{equation}
In addition, there exists $a_0^* \in \mathcal{A}_0$ such that 
\begin{equation}\label{eq_thm_iia_t0}
\Psi_0 =  J_0(a_0^*).
\end{equation}
Moreover, for all $t\in \{1,\dots,T-1\}$ there exists a measurable selector
\begin{equation}\label{eq_thm_iip_def}
\Omega^t \times \mathcal{A}^{t+1} \ni (\omega^t,a^{t+1}) \mapsto  \widetilde{\PP}_t^*(\omega^t,a^{t+1}) \in  \mathcal{P}_t(\omega^t)
\end{equation}
such that for all $(\omega^t,a^{t+1})\in \Omega^t \times \mathcal{A}^{t+1} $ we have
\begin{equation}\label{eq_thm_iiP}
\E_{\widetilde{\PP}_t^*(\omega^t,a^{t+1})} \left[\Psi_{t+1} \left(\omega^t \otimes_t \cdot, a^{t+1}\right)\right]=\inf_{\PP \in \mathcal{P}_t(\omega^t)} \E_{\PP} \left[\Psi_{t+1} \left(\omega^t \otimes_t \cdot, a^{t+1}\right)\right].
\end{equation}
In addition, there exists a measurable selector $\mathcal{A}_0 \ni a \mapsto \widetilde{\PP}_0^*(a) \in \mathcal{P}_0$ such that for all $a \in \mathcal{A}_0$ we have 
\begin{equation}\label{eq_thm_iiP_t0}
\E_{\widetilde{\PP}_0^*(a)} \left[\Psi_{1} \left(\cdot, a\right)\right]=\inf_{\PP \in \mathcal{P}_0} \E_{\PP} \left[\Psi_{1} \left(\cdot, a\right)\right].
\end{equation}
\item[(ii)]
For every $t\in \{1,\dots,T-1\}$ let $\widetilde{a}_t^*$ be the measurable selector from \eqref{eq_thm_iia_def} and let $a_0^*$ be defined in \eqref{eq_thm_iia_t0}, and define
\[
\Omega^t \ni \omega^t =(\omega_1,\dots,\omega_{t}) \mapsto a_t^*(\omega^t):= \widetilde{a}_t^*\left(\omega^t,~\left(a_0^*,\dots,a_{t-1}^*(\omega_1,\dots,\omega_{t-1}\right)\right) \in \mathcal{A}_t(\omega^t).
\]
Moreover, for every $t \in \{1,\dots,T-1\}$ let $\widetilde{\PP}_t^*$ denote the measurable selector from \eqref{eq_thm_iip_def} and define
\[
\Omega^t \ni \omega^t \mapsto {\PP}_t^*(\omega^t) := \widetilde{\PP}_t^*(\omega^t,(a_s^*(\omega^s))_{s=0,\dots,t})) \in  \mathcal{P}_t(\omega^t).
\]
In addition, let $\widetilde{\PP}_0^*$ denote the measurable selector from \eqref{eq_thm_iiP_t0} and define $\PP_0^*:= \widetilde{\PP}_0^*(a_0^*)$.
Then for $\ab^*:=(a_t^*)_{t=0,\dots,T-1} \in \mathfrak{A}$ and $\PP^*:=\PP_0^* \otimes \cdots \otimes \PP_{T-1}^* \in \mathfrak{P}$ we have
\begin{equation}\label{eq_optimality_equation_1}
 \sup_{a\in \mathfrak{A}}\inf_{\PP \in \mathfrak{P}} \E_{\PP}\left[\Psi(a_0,\dots,a_{T-1})\right] 
 = \inf_{\PP \in \mathfrak{P}} \E_{\PP}\left[\Psi(a_0^*,\dots,a_{T-1}^*) \right]
 = \E_{\PP^*}\left[\Psi \left(a_0^*,\dots,a_{T-1}^* \right)\right]
 =\Psi_0.
\end{equation}
\end{itemize}
\end{thm}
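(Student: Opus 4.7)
My plan is to prove both parts simultaneously by backward induction on $t \in \{0,\dots,T\}$, maintaining the inductive claim that each $\Psi_t$ is continuous, Lipschitz continuous in $(\omega^t,a^t)$, and has polynomial growth of degree $p$ in $\omega^t$, and that the one-step problems~\eqref{eq_defn_J_t}--\eqref{eq_defn_Psi_0} admit continuous value functions together with measurable optimisers. The base case $t=T$ is exactly Assumption~\ref{asu_psi}. The engine driving the recursion is Berge's maximum theorem, applied twice per step: once to the correspondence $\mathcal{P}_t$ and once to the compact control set $A_t$.

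At the inductive step, assuming the regularity of $\Psi_{t+1}$, I first verify joint continuity of the auxiliary map
\[
F_{t+1}(\omega^t, a^{t+1}, \PP) := \int_{\Omloc} \Psi_{t+1}(\omega^t \otimes_t x, a^{t+1})\,\PP(\D x)
\]
on $\Omega^t \times A^{t+1} \times (\mathcal{M}_1^p(\Omloc), \tau_p)$: along a convergent sequence in the outer variables, Lipschitz continuity of $\Psi_{t+1}$ yields a uniform-in-$x$ bound for the integrand, while for fixed $(\omega^t,a^{t+1})$ the map $x\mapsto \Psi_{t+1}(\omega^t\otimes_t x,a^{t+1})$ lies in $C_p(\Omloc,\R)$ by the inductive polynomial growth, so $\tau_p$-convergence in $\PP$ closes the argument. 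Since Assumption~\ref{asu_P}~(i) gives that $\mathcal{P}_t$ is non-empty, compact-valued, and continuous, Berge's theorem yields that $J_t$ is continuous on $\Omega^t \times A^{t+1}$ and that the argmin correspondence is non-empty, compact-valued and upper hemicontinuous; a Kuratowski--Ryll-Nardzewski style measurable selection then produces the kernel $\widetilde{\PP}_t^*$ of~\eqref{eq_thm_iip_def}--\eqref{eq_thm_iiP}. A second application of Berge to the continuous $J_t$ and the constant compact correspondence $A_t$ yields continuity of $\Psi_t$ defined in~\eqref{eq_defn_Psi_t} and, by the same selection, the control selector $\widetilde{a}_t^*$ of~\eqref{eq_thm_iia_def}--\eqref{eq_thm_iia}. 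The cases $t=0$ collapse to the same arguments on a trivial base space, delivering~\eqref{eq_thm_iia_t0} and~\eqref{eq_thm_iiP_t0}.

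To close the induction I verify that $\Psi_t$ inherits Lipschitz continuity and polynomial growth. Polynomial growth of $J_t$ follows from Assumption~\ref{asu_P}~(ii) combined with Assumption~\ref{asu_psi}~(ii), and is preserved by $\sup_{\widetilde a\in A_t}$. For Lipschitzness of $J_t$ in $\omega^t$ I invoke the stability hypothesis Assumption~\ref{asu_P}~(iii): given any $\PP\in \mathcal{P}_t(\omega^t)$ I pick $\widetilde{\PP}\in \mathcal{P}_t(\widetilde{\omega}^t)$ with $\operatorname{d}_{W_1}(\PP,\widetilde{\PP}) \leq L_{P,t}\sum_i \|\omega_i^t-\widetilde{\omega}_i^t\|$, add and subtract $\int\Psi_{t+1}(\widetilde{\omega}^t\otimes_t x,a^{t+1})\,\PP(\D x)$, and estimate the two resulting differences: the first via Lipschitzness of $\Psi_{t+1}$ in its first argument, the second via Kantorovich--Rubinstein duality applied to the Lipschitz function $x\mapsto \Psi_{t+1}(\widetilde{\omega}^t\otimes_t x, a^{t+1})$. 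Symmetrising in $(\omega^t,\widetilde{\omega}^t)$ and passing to the infimum over $\PP$ produces the desired Lipschitz estimate, while Lipschitzness in $a^{t+1}$ is inherited pointwise. Taking $\sup_{\widetilde a\in A_t}$ preserves Lipschitz regularity in $(\omega^t,a^t)$, completing the induction.

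Once the recursion is established, part~(ii) follows from a standard dynamic programming identity: by the product structure~\eqref{eq_P_integrals} of $\mathfrak{P}$, any $\PP\in \mathfrak{P}$ disintegrates into its $\omega^t$-conditional kernels, so the inner infimum in~\eqref{eq_optimality_equation_1} can be computed time step by time step; substituting at each step the selectors $\widetilde{\PP}_t^*$ and $\widetilde a_t^*$, the recursive definitions~\eqref{eq_defn_J_t}--\eqref{eq_defn_Psi_0} then give the outer equalities in~\eqref{eq_optimality_equation_1}, while the middle equality is exactly the defining property of $\PP^*$ built along the optimal trajectory. The delicate point I expect to wrestle with is the joint continuity of $F_{t+1}$ in the $\tau_p$ topology when the integrand itself depends on a moving outer variable $\omega^t$, and the concurrent propagation of Lipschitz regularity (not just continuity) across the $T$ induction steps; Assumption~\ref{asu_P}~(iii) is tailor-made for the second task via $W_1$--Lipschitz duality, but careful book-keeping of the Lipschitz and growth constants will be required.
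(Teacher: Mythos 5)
Your proposal is correct and follows essentially the same route as the paper: a backward induction propagating Lipschitz continuity and polynomial growth of $\Psi_t$ (via Assumption~\ref{asu_P}~(ii)--(iii) and a $W_1$-coupling estimate), Berge's maximum theorem plus the measurable maximum theorem for the selectors $\widetilde{\PP}_t^*$ and $\widetilde{a}_t^*$, and then the standard two-sided dynamic programming inequalities obtained by disintegrating an arbitrary $\PP\in\mathfrak{P}$ and inserting the selectors, exactly as in the paper's Lemma~\ref{lem_appendix_backwards_iteration}, Corollary~\ref{cor_appendix_backwards_iteration} and the proof of Theorem~\ref{thm_main_result}~(ii). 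The only cosmetic difference is that you establish the Lipschitz estimate at the level of $J_t$ before taking $\sup_{\widetilde a\in A_t}$, whereas the paper argues directly on $\Psi_t$ using the optimal $\widetilde a_t^*$; both yield the same bound.
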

\section{Modeling Ambiguity { and Controls}}\label{sec_ambiguity}
{
The goal of this section is to demonstrate that our general conditions imposed on the ambiguity set of probability measures (cf.\ Assumption~\ref{asu_P}) and on the set of controls (cf.\ Assumption~\ref{asu_A}) are \textit{naturally satisfied} for relevant examples. More precisely, 
in Section~\ref{sec_ambiguity_wasserstein} we show that  ambiguity sets  modeled as Wasserstein-balls around  path-dependent reference probability measures and  path dependent radii satisfy Assumption~\ref{asu_P}.
In Section~\ref{sec_ambiguity_parametric} we show in the context of parameter uncertainty that  ambiguity sets of probability measures consisting of all parametrized probability measures whose parameter lie within a ball with path dependent radii  around  a path dependent reference parameter also satisfy Assumption~\ref{asu_P}. 
Moreover, we provide in Section~~\ref{sec_controls} several examples for sets of controls which satisfy Assumption~\ref{asu_A}.}
\subsection{Modeling ambiguity with the Wasserstein distance}\label{sec_ambiguity_wasserstein}

The following result shows that the important canonical example of an ambiguity set given by a Wasserstein-ball around a path-dependent reference probability measure and a path dependent radius fulfills the assumption formulated in Assumption~\ref{asu_P}.
This can be seen as a purely data-driven approach to describe model uncertainty whenever the corresponding reference measure is estimated solely using data (e.g.\ using the empirical measure).
\begin{thm} \label{thm_wasserstein_ambiguity}
For each $t \in \{1,\dots,T-1\}$, $q \in \N$, let $\Omega^t \ni \omega^t \mapsto \widehat{\PP}_t(\omega^t) \in \left(\mathcal{M}_1^q(\Omloc), \tau_q\right)$ be $L_{\widehat{\mathbb{P}},t}$-Lipschitz continuous\footnote{For any $t \in \{1,\dots,T\}$ and any metric space $(\mathcal{X},d_{\mathcal{X}})$ we call a function $f:\Omega^t \to \mathcal{X}$ Lipschitz continuous with respect to a Lipschitz-constant $L \geq 0$ if for all $\omega^t=(\omega_1,\dots,\omega_t), \widetilde{\omega}^t=(\widetilde{\omega}_1,\dots,\widetilde{\omega}_t) \in \Omega^t$ we have $d_{\mathcal{X}}(f(\omega^t),f(\widetilde{\omega}^t))\leq L \cdot \left(\sum_{i=1}^t \|\omega_i- \widetilde{\omega}_i\|\right).$}
with respect to some $L_{\widehat{\mathbb{P}},t} \geq 0$,
{let $\overline{\varepsilon}_t>0$, and let $\varepsilon_t:\Omega^t \mapsto [0,\overline{\varepsilon}_t]$ be $L_{\varepsilon,t}$-Lipschitz continuous with respect to some $L_{\varepsilon,t}\geq 0$.
} Moreover, consider some $\widehat{\PP}_0 \in \mathcal{M}_1^q(\Omloc)$ and some $\varepsilon_0\geq 0$. Furthermore, assume that $\Omloc$ is convex.

Then, the ambiguity set defined by  for $t \in \{1,\dots,T-1\}$ by
\begin{align*}
\Omega^t \ni \omega^t \twoheadrightarrow \mathcal{P}_t\left({\omega}^{t}\right)&:=\mathcal{B}_{\varepsilon_t(\omega^t)}^{(q)} \left(\widehat{\PP}_t(\omega^t)\right):= \left\{\PP\in \mathcal{M}_1^q(\Omloc)~ \middle|~ \operatorname{d}_{W_q}\left(\widehat{\PP}_t(\omega^t),~\PP \right) \leq  {\varepsilon_t(\omega^t)} \right\}, 
\\
 \mathcal{P}_0&:= \mathcal{B}_{\varepsilon_0}^{(q)} \left(\widehat{\PP}_0\right):= \left\{\PP\in \mathcal{M}_1^q(\Omloc)~ \middle|~ \operatorname{d}_{W_q}\left(\widehat{\PP}_0,~\PP \right) \leq  \varepsilon_0 \right\}, 
\end{align*}
 satisfies Assumption~\ref{asu_P} if $p < q$.
\end{thm}
{
\begin{rem}\label{rem:adaptive-robust-control}
The above framework presented in Theorem~\ref{thm_wasserstein_ambiguity} for modeling the ambiguity sets allows to formulate the corresponding stochastic optimal control problem under model uncertainty \eqref{eq_optimization_problem} as a non-parametric adaptive robust control problem similar to \cite{bayraktar2023nonparametric, bayraktar2022data}.
More precisely, let here $(Z_t)_{t=1}^T$ be the canonical process on $(\Omega, \mathcal{F})$, i.e.\ $Z_t(\omega^T)=\omega_t$ for all $t \in \{1,\dots,T\}$, $\omega^T=(\omega_1, \dots, \omega_T) \in \Omega$ representing the process which is observable to the agent, but whose law is not known. Moreover,  let here the (random) state process $(X^\mathbf{a}_t)_{t=1}^T$ be characterized by its controlled dynamics defined for every control $\mathbf{a}=(a_t)_{t=0}^{T-1}\in \mathfrak{A}$ by $X^\mathbf{a}_{0}=x_0\in \R^n$ and 
\begin{equation*}
	X^\mathbf{a}_{t+1}=S(X^\mathbf{a}_t,a_t, Z_{t+1}),
\end{equation*}
where $S:\R^n \times A \times \Omloc \to \R^n$ denotes the (deterministic) transition function and $(a_t)_{t=0}^{T-1}\in \mathfrak{A}$ is an $\mathbb{F}$-adapted control taking values in (for simplicity here) some fixed compact set $A\subseteq \R^m$. Moreover, let $l:\R^n\to \R$ be some loss function. Then we see that the following stochastic control problem
\begin{equation*}
\sup_{\mathbf{a} \in \mathfrak{A}} \inf_{\PP \in \mathfrak{P}} \E_{\PP} \left[l(X^\mathbf{a}_T)\right]
\end{equation*}
fits into our stochastic optimal control problem \eqref{eq_optimization_problem} and satisfies our assumptions imposed for our dynamic programming principle in Theorem~\ref{thm_main_result}, provided that
 $S:\R^n \times A \times \Omloc \to \R^n$ and $l:\R^n\to \R$ satisfy some regularity conditions, for example $S$ being Lipschitz-continuous and $l$ being $\alpha$-H\"older continuous  for some $\alpha \in (0,1]$ with polynomial growth at most $p \in \N_0$, as well as the Wasserstein balls $\Omega^t\ni\omega^t \to \mathcal{P}_t(\omega^t):=\mathcal{B}_{\varepsilon_t(\omega^t)}^{(q)} \big(\widehat{\PP}_t(\omega^t)\big)$, $t=0,\dots,T-1$ satisfying the assumptions of Theorem~\ref{thm_wasserstein_ambiguity}. By choosing appropriately the radii $\varepsilon_t$, $t=0,\dots,T-1$, of the corresponding Wasserstein-ball around the estimated law of each 
 $Z_t$, $t=1,\dots,T$,
 one  can incorporate robust control while learning and reducing uncertainty from data. For example, following \cite{bayraktar2023nonparametric, bayraktar2022data} based on \cite{fournier2015rate}, an appropriate choice for the radius $\varepsilon_t$ at each time $t$ is a multiple of $\frac{1}{t+t_0}$, where here $t_0$ denotes the length in time of historical data of the observation process $(Z_{s})_{s=-t_0+1}^0$ (before the start of the optimization problem at time $0$). We refer to \cite{bayraktar2023nonparametric, bayraktar2022data} for a detailed discussion on nonparametric adaptive robust control problems under model uncertainty using Wasserstein balls for the ambiguity sets, as well as for the appropriate choice of the corresponding radii $\varepsilon_t$, $t=0,\dots,T-1$. We highlight that, compared to \cite{bayraktar2023nonparametric, bayraktar2022data}, our framework allows the radii $\varepsilon_t:\Omega^t \to [0,\overline{\varepsilon}_t]$, $t=1,\dots,T-1$, also to depend on the path of the observations $(Z_{s})_{s=1}^t$ and does not need to be deterministic.
\end{rem}
}
%
%
\subsection{Modeling ambiguity in parametric models}\label{sec_ambiguity_parametric}
In this section we show that the general conditions imposed on the ambiguity sets of probability measures in Assumption~\ref{asu_P} from Section~\ref{sec_setting} also 
allows 
to consider parametric families of distributions as ambiguity sets 
with corresponding ambiguity  described 
 by time- and path-dependent parameter sets. 

To this end,  for each $t\in \{0,1,\dots,T-1\}$  let $\left(\PP_\theta \right)_{\theta \in \Theta_t} \subseteq \mathcal{M}_1(\Omloc)$ be a family of probability measures parameterized by $\theta \in \Theta_t$ for some fixed set $\Theta_t \in R^{D_t}$, and let $p \in \N_0$ be the integer from Assumption~\ref{asu_psi}. Then, we impose the following assumptions. 
\begin{enumerate}[label=\textbf{(A\arabic*)}]
\item\label{eq_a1}
For each $t \in \{0,1,\dots,T-1\}$ let $\Theta_t \in \R^{D_t}$ for some $D_t \in \N$ be non-empty, convex, and closed. 
\item\label{eq_a2} For each $t \in \{0,1,\dots,T-1\}$ let the map
\begin{align*}
\R^{D_t} \supseteq \Theta_t &\rightarrow \left(\mathcal{M}_1^{\max\{1,p\}}(\Omloc), \tau_{\max\{1,p\}} \right)\\
\theta &\mapsto \PP_{\theta}(\D x)
\end{align*}
be Lipschitz continuous with respect to some constant $ L_{P_\theta,t}\geq 0$, i.e.\  $\operatorname{d}_{W_{\max\{1,p\}}}\left(\PP_{\theta_1},\PP_{\theta_2}\right) \leq L_{P_\theta,t}   \cdot \|\theta_1- \theta_2\| $ for all $\theta_1,\theta_2 \in \Theta_t$.
\item\label{eq_a3}  For each $t\in \{1,\dots,T-1\}$ let the map
\begin{align*}
\widehat{\theta}_t: \Omega^t & \rightarrow \Theta_t \\
\omega^t&\mapsto \widehat{\theta}_t (\omega^t) 
\end{align*}
be Lipschitz continuous with respect to some constant $L_{\widehat{\theta},t}\geq 0$.
\item \label{eq_a4}  For each $t\in \{1,\dots,T-1\}$,
{ let $\overline{\varepsilon}_t>0$, let $\varepsilon_t:\Omega^t \mapsto [0,\overline{\varepsilon}_t]$ be $L_{\varepsilon,t}$-Lipschitz continuous with respect to some $L_{\varepsilon,t}\geq 0$, 
}
and define 
\[
\Omega^t \ni \omega^t \twoheadrightarrow\mathcal{P}_{t}(\omega^t):= \left\{\PP_{\theta}(\D x) \in \mathcal{M}_1^{\max\{1,p\}}(\Omloc)~\middle|~\theta \in \Theta_t \text{ with } \| \theta - \widehat{\theta}_t(\omega^t)\| \leq {\varepsilon_t(\omega^t)} \right\}.
\]
\item\label{eq_a5} For $t=0$ let $\widehat{\theta}_0 \in \Theta_0$, let $\varepsilon_0\geq 0$, and define 
\[
\mathcal{P}_{0}:= \left\{\PP_{\theta}(\D x) \in \mathcal{M}_1^{\max\{1,p\}}(\Omloc)~\middle|~\theta \in \Theta_0 \text{ with } \| \theta - \widehat{\theta}_0\| \leq \varepsilon_0 \right\}.
\]
\end{enumerate}
{
This approach \ref{eq_a1}--\ref{eq_a5} for describing model uncertainty can be seen as \textit{semi data-driven}, whenever the reference parameters $\widehat{\theta}_t:\Omega^t\to \Theta_t$, $t=0,\dots T-1$, are estimated using data. Indeed in this approach, some model assumptions are imposed by restricting the ambiguity sets of probability measures to be parameterized,
 but on the other hand  one only considers parameters which are close enough to the  estimated parameter $\widehat{\theta}_t:\Omega^t\to \Theta_t$, $t=0,\dots T-1$.
}
 %
\begin{thm}\label{thm_param_assumptions}
For each $t\in \{0,1,\dots,T-1\}$  let $\left(\PP_\theta \right)_{\theta \in \Theta_t} \subseteq \mathcal{M}_1(\Omloc)$ be a family of probability measures parameterized by $\theta \in \Theta_t$, and let $p \in \N_0$ be the integer from Assumption~\ref{asu_psi}. Moreover, assume that assumptions \ref{eq_a1} - \ref{eq_a5} holds. Then Assumption~\ref{asu_P} is satisfied.
\end{thm}

{
\begin{rem}
Both the Wasserstein-ball approach presented in Section~\ref{sec_ambiguity_wasserstein} and the parametric approach described in \ref{eq_a1}--\ref{eq_a5} in this section can be seen as the natural generalizations of the framework developed in \cite[Section~3.1\&Section~3.2]{neufeld2023mdp}, where here our reference measures $\widehat{\PP}_t:\Omega^t\to\mathcal{M}_1(\Omloc)$ and reference estimators $\widehat{\theta}_t:\Omega^t\to \Theta_t$, as well as  the corresponding radii of the balls $\varepsilon_t:\Omega^t\to[0,\overline{\varepsilon}_t]$ at each time $t$ are allowed to be \text{path-dependent}, compared to the corresponding (single, time-independent) reference measure and estimator in \cite{neufeld2023mdp} which was only allowed to be depending on the current state, and where the radius had to be a constant.

Moreover, from a technical point of view, the most difficult part in Theorem~\ref{thm_wasserstein_ambiguity} and Theorem~\ref{thm_param_assumptions} is to verify that each of these corresponding settings satisfy Assumption~\ref{asu_P}~(iii). This condition was not necessary in \cite{neufeld2023mdp} due to the different nature of the Bellman principle in the infinite-horizon case \cite{neufeld2023mdp}. We refer to Section~\ref{sec:Comparison} for a more detailed discussion on the comparison between this work and~\cite{neufeld2023mdp}.
\end{rem}
}

Next, we present 
{ the following parametric families} of distributions fulfilling the above mentioned assumptions \ref{eq_a1} - \ref{eq_a5} and therefore also, according to Theorem~\ref{thm_param_assumptions}, Assumption~\ref{asu_P}.
{
\begin{prop}[Normal distributions]\label{prop_normal_family}
Let $\Omloc := \R^d$ and for each $t \in \{0,1,\dots,T-1\}$ let $\Theta_t:= \R^d \times[0,\infty)^d$. For each $\theta:=(\boldsymbol{\mu}, \boldsymbol{\sigma})
:=\big((\mu_1,\dots,\mu_d),(\sigma_1,\dots,\sigma_d)\big)
\in \R^d \times[0,\infty)^d$, we define\footnote{For any $\boldsymbol{\sigma}:=(\sigma_1,\dots,\sigma_d) \in [0,\infty)^d$ we denote by $\mbox{diag}(\boldsymbol{\sigma})$ the diagonal matrix $M$ in $\R^{d\times d}$ with diagonal entries~$\boldsymbol{\sigma}$, i.e. $M_{i,i}=\sigma_i$ for each $i \in \{1,\dots,d\}$.}
 $\PP_{\theta}=\PP_{(\boldsymbol{\mu}, \boldsymbol{\sigma})}:=\mathcal{N}(\boldsymbol{\mu},\mbox{diag}(\boldsymbol{\sigma}^2))$,
i.e., as an multivariate normal\footnote{
	We say that a $d$-dimensional random variable $X$ is $d$-dimensional multivariate normally distributed with mean $\boldsymbol{\mu} \in \R^d$ and covariance matrix $\Sigma \in \R^{D\times D}$ which is symmetric and positive semidefinite if the characteristic function of $X$ is of the form $\R^d \ni u \mapsto \varphi_X(u):= \exp\left(i u^T \boldsymbol{\mu}- \tfrac{1}{2} u^T \Sigma u\right)$, compare, e.g., \cite[p. 124]{gut2009multivariate}. We write $X \sim \mathcal{N}_d(\boldsymbol{\mu}, \Sigma)$.} distribution with mean $\boldsymbol{\mu}$ and covariance matrix $\mbox{diag}(\boldsymbol{\sigma}^2)$ with  $\boldsymbol{\sigma}^2:=(\sigma^2_1,\dots,\sigma^2_d)$.
	Moreover, let $\widehat{\boldsymbol{\mu}}_0 \in \R^d$ and define for each 
	$t\in \{1,\dots,T-1\}$
	\begin{equation}\label{eq_defn_mu_hat}
		\begin{aligned}
			\widehat{\boldsymbol{\mu}}_t:\Omega^t &\rightarrow \R^d\\
		(\omega_1,\dots,\omega_t) &\mapsto \overline{\omega^t}:=\frac{1}{t}\sum_{s=1}^t \omega_s.
		\end{aligned}
	\end{equation}
Furthermore, let $\widehat{\boldsymbol{\sigma}}_0, \widehat{\boldsymbol{\sigma}}_1 \in [0,\infty)^d$ and define  for each $t\in \{2,\dots,T-1\}$
	$\widehat{\boldsymbol{\sigma}}_t=(\widehat{\sigma}_{t,1},\dots,\widehat{\sigma}_{t,d}):\Omega^t \rightarrow [0,\infty)^d$ by setting for each  $j=1,\dots,d$
	\begin{equation}\label{eq_defn_sigma_hat}
		\begin{aligned}
		\widehat{\sigma}_{t,j}:\Omega^t &\rightarrow [0,\infty)\\
			(\omega_1,\dots,\omega_t) &\mapsto \frac{\sqrt{\pi}}{\sqrt{2}}\frac{\sqrt{t}}{\sqrt{t-1}} \frac{1}{t}\sum_{s=1}^t \big|\omega_{s,j}- \overline{\omega_j}\big|,
		\end{aligned}
	\end{equation}
where for $s=1,\dots,t$ and $j=1,\dots, d$ we write 
$\omega_s=(\omega_{s,1}, \dots, \omega_{s,d})\in \R^d$ and 
$\overline{\omega^t_j}:=\frac{1}{t}\sum\limits_{s=1}^t \omega_{s,j}\in \R$.
Moreover, for each $t\in \{1,\dots,T-1\}$ let $\overline{\varepsilon}_t>0$ and let $\varepsilon_t:\Omega^t \mapsto [0,\overline{\varepsilon}_t]$ be $L_{\varepsilon,t}$-Lipschitz continuous with respect to some $L_{\varepsilon,t}\geq 0$ and let $\varepsilon_0\geq 0$.
Then, for any 
 $p\in \{0,1,2\}$, the set-valued maps
\begin{align*}
	\Omega^t \ni \omega^t \twoheadrightarrow \mathcal{P}_t(\omega^t)&:=\left\{\PP_{(\boldsymbol{\mu}, \boldsymbol{\sigma})}(\D x)
	~\middle|~ (\boldsymbol{\mu}, \boldsymbol{\sigma}) \in \R^d\times [0,\infty)^d \text{ with } \| (\boldsymbol{\mu}, \boldsymbol{\sigma}) -(\widehat{\boldsymbol{\mu}}_t(\omega^t),\widehat{\boldsymbol{\sigma}}_t(\omega^t))\| \leq \varepsilon_t(\omega^t)
	\right\}\\
	\mathcal{P}_0&:= \left\{\PP_{(\boldsymbol{\mu}, \boldsymbol{\sigma})}(\D x)
	~\middle|~ (\boldsymbol{\mu}, \boldsymbol{\sigma}) \in \R^d\times [0,\infty)^d \text{ with } \| (\boldsymbol{\mu}, \boldsymbol{\sigma}) -(\widehat{\boldsymbol{\mu}}_0,\widehat{\boldsymbol{\sigma}}_0)\| \leq \varepsilon_0 
	\right\}\\
\end{align*}
satisfy  \ref{eq_a1} - \ref{eq_a5}.
\end{prop}
\begin{rem}\label{rem:estimator-normal}
	To motivate $\eqref{eq_defn_mu_hat}$ and $\eqref{eq_defn_sigma_hat}$, note that for any i.i.d.\ samples  of normally distributed random variables $Y_1,\dots, Y_t \sim \mathcal{N}_1(\mu,\sigma^2)$,  
	\begin{equation*}
		\widehat{\mu}:=\overline{Y}:=\frac{1}{t}\sum_{s=1}^t Y_s 
		\qquad \mbox {and }\qquad 
		\widehat{\sigma}:= \frac{\sqrt{\pi}}{\sqrt{2}}\frac{\sqrt{t}}{\sqrt{t-1}} \frac{1}{t}\sum_{s=1}^t \big|Y_s- \overline{Y}\big|
	\end{equation*}
	are unbiased estimators of $\mu$ and $\sigma$, respectively. Indeed,  this clearly holds for $\widehat{\mu}$. To see this also for 	$\widehat{\sigma}$, note that  for each $s=1,\dots, t$ we have $Y_s- \overline{Y} \sim \mathcal{N}_1(0,\frac{t-1}{t}\sigma^2)$, which implies that $\E[|Y_s- \overline{Y}|]=\frac{\sqrt{2}}{\sqrt{\pi}}\frac{\sqrt{t-1}}{\sqrt{t}}\sigma$. Therefore, we conclude that indeed $\E[\widehat{\sigma}]=\sigma$.
	

\end{rem}
}

\begin{prop}[Exponential distributions]\label{prop_exponential_family}
Let $\Omloc := [0,\infty)$ and for each $t \in \{0,1,\dots,T-1\}$ let $\Theta_t:=  [0,\infty)$. For each $\theta \in \Theta_t$ define $\PP_{\theta}$ by 
\[
\PP_{\theta}: = \begin{cases}
\operatorname{Exp}\left(\tfrac{1}{\theta}\right) &\text{ if } \theta >0, \\
\delta_{\{0\}} 
&\text{ if } \theta =0,
\end{cases}
\]
i.e., as an exponential distribution with rate parameter $\tfrac{1}{\theta}$ if $\theta>0$ and as the Dirac measure at $0$ if $\theta = 0$.
Moreover, we let $\widehat{\theta}_0 \in \Theta_0$ and define for each $t\in \{1,\dots,T-1\}$
\begin{equation}\label{eq_defn_theta_hat}
\begin{aligned}
\widehat{\theta}_t:\Omega^t &\rightarrow \Theta_t\\
(\omega_1,\dots,\omega_t) &\mapsto \frac{1}{t}\sum_{i=1}^t \omega_i.
\end{aligned}
\end{equation}
{
	Moreover, for each $t\in \{1,\dots,T-1\}$ let $\overline{\varepsilon}_t>0$ and let $\varepsilon_t:\Omega^t \mapsto [0,\overline{\varepsilon}_t]$ be $L_{\varepsilon,t}$-Lipschitz continuous with respect to  some $L_{\varepsilon,t}\geq 0$ and let $\varepsilon_0\geq 0$.
}
Then, 
for any $p\in \N_0$ the set-valued maps
\begin{align*}
\Omega^t \ni \omega^t \twoheadrightarrow \mathcal{P}_t(\omega^t)&:=\left\{\PP_\theta(\D x)\in \mathcal{M}_1^{\max\{1,p\}}(\Omloc)~\middle|~ \theta \in \Theta_t \text{ with } \| \theta-\widehat{\theta}_t(\omega^t)\| \leq {\varepsilon_t(\omega^t)}\right\}\\
\mathcal{P}_0&:= \left\{\PP_\theta(\D x)\in \mathcal{M}_1^{\max\{1,p\}}(\Omloc)~\middle|~ \theta \in \Theta_0 \text{ with } \| \theta-\widehat{\theta}_0\| \leq \varepsilon_0\right\}
\end{align*}
satisfy  \ref{eq_a1} - \ref{eq_a5}.
\end{prop}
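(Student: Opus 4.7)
My plan is to verify conditions \ref{eq_a1}--\ref{eq_a5} in turn, with the bulk of the work concentrated on the Wasserstein-Lipschitz property \ref{eq_a2}. Condition \ref{eq_a1} is immediate since $\Theta_t = [0,\infty) \subseteq \R$ is non-empty, closed, and convex. Conditions \ref{eq_a4} and \ref{eq_a5} are essentially tautological: the sets $\mathcal{P}_t(\omega^t)$ and $\mathcal{P}_0$ defined in the proposition agree by construction with the sets prescribed in \ref{eq_a4} and \ref{eq_a5}, once we have verified that $\widehat{\theta}_t$ takes values in $\Theta_t$, which is clear because arithmetic means of non-negative numbers are non-negative. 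For \ref{eq_a3}, the triangle inequality applied to $\widehat{\theta}_t(\omega^t) - \widehat{\theta}_t(\widetilde\omega^t) = \tfrac{1}{t}\sum_{i=1}^t(\omega_i^t - \widetilde\omega_i^t)$ gives the Lipschitz bound with constant $L_\theta = 1$.

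The main obstacle, and the key step, is \ref{eq_a2}: showing $\theta \mapsto \PP_\theta$ is Lipschitz into $(\mathcal{M}_1^{\max\{1,p\}}(\Omloc), \tau_{\max\{1,p\}})$. Set $q:= \max\{1,p\}$. Because $\Omloc = [0,\infty) \subseteq \R$ is one-dimensional, I will use the well-known representation of the Wasserstein-$q$ distance via quantile functions:
\[
\operatorname{d}_{W_q}(\PP_{\theta_1},\PP_{\theta_2})^q \;=\; \int_0^1 \bigl|F_{\theta_1}^{-1}(u) - F_{\theta_2}^{-1}(u)\bigr|^q \D u,
\]
where $F_{\theta}^{-1}$ denotes the (generalized) inverse CDF of $\PP_\theta$. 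For $\theta>0$, the exponential law with mean $\theta$ has $F_\theta^{-1}(u) = -\theta \log(1-u)$, while $F_0^{-1}(u) \equiv 0$ for the Dirac case $\theta = 0$. In either case, one can write $F_\theta^{-1}(u) = -\theta\log(1-u)$ uniformly in $\theta \geq 0$, so that
\[
\operatorname{d}_{W_q}(\PP_{\theta_1},\PP_{\theta_2})^q = |\theta_1 - \theta_2|^q \int_0^1 |\log(1-u)|^q \D u = |\theta_1 - \theta_2|^q \int_0^\infty v^q e^{-v} \D v = |\theta_1-\theta_2|^q \,\Gamma(q+1),
\]
after substituting $v = -\log(1-u)$. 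This yields $\operatorname{d}_{W_q}(\PP_{\theta_1},\PP_{\theta_2}) \leq (\Gamma(q+1))^{1/q} \cdot |\theta_1 - \theta_2|$, giving \ref{eq_a2} with $L_{P_\theta,t} := (\Gamma(q+1))^{1/q}$. Finite $q$-moment, i.e.\ that $\PP_\theta \in \mathcal{M}_1^q(\Omloc)$, follows because exponential distributions have all moments finite.

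I expect the only delicate point to be the uniform handling of the degenerate case $\theta = 0$: I will either treat it separately and observe that both sides of the Lipschitz inequality vanish when $\theta_1 = \theta_2 = 0$ and reduce to the formula above when exactly one of $\theta_1,\theta_2$ equals zero (since $\delta_0$ is the weak limit of $\operatorname{Exp}(1/\theta)$ as $\theta \downarrow 0$, with $\operatorname{d}_{W_q}(\delta_0, \PP_\theta) = \theta\,(\Gamma(q+1))^{1/q}$), or, more economically, simply note that the quantile formula $F_\theta^{-1}(u) = -\theta\log(1-u)$ extends continuously to $\theta = 0$, making the computation above valid throughout $\Theta_t$. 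With \ref{eq_a1}--\ref{eq_a5} all verified, Proposition~\ref{prop_param_assumptions} then finishes the proof.
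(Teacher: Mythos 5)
Your proposal is correct and follows essentially the same route as the paper: verify \ref{eq_a1}, \ref{eq_a3}--\ref{eq_a5} directly, and establish \ref{eq_a2} via the one-dimensional quantile representation of $\operatorname{d}_{W_{\max\{1,p\}}}$, with $F_\theta^{-1}(u)=-\theta\log(1-u)$ and the Lipschitz constant $(\Gamma(q+1))^{1/q}=[\max\{1,p\}!]^{1/\max\{1,p\}}$, which is exactly the paper's constant, including the same treatment of the degenerate case $\theta=0$ (the paper handles it as a separate case, you note the formula extends continuously — both are fine).
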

\begin{rem}~
\begin{itemize}
\item[(i)]Note that for each $(\theta_n)_{n \in \N} \subset (0,\infty)$ with $\lim_{n \rightarrow \infty} \theta_n = 0$ we have that $\PP_{\theta_n} \sim \operatorname{Exp}\left(\frac{1}{\theta_n}\right)$ converges for any $p \in \N_0$ in $\tau_p$ to $\delta_{\{0\}}=:\PP_0$. Indeed, the corresponding characteristic function satisfies 
\[
\varphi_{\PP_{\theta_n}}(t)=\frac{1}{1-i t {\theta_n}} \rightarrow 1 = \varphi_{\delta_{\{0\}}}(t) \text{ for all } t \in \R \text{ as } n \rightarrow \infty,
\]
which by L\'evy's continuity theorem for characteristic functions implies the result for $p=0$. If $p>0$ note that additionally
\[
\int_0^\infty |x|^p \PP_{\theta_n}(\D x) = \frac{p!}{\left(\tfrac{1}{\theta_n}\right)^p} = p! \theta_n^p \rightarrow 0 = \int_0^\infty |x|^p \delta_{\{0\}}(\D x) \text{ as } n \rightarrow \infty.
\]
Therefore, the result now follows by \cite[Theorem 2.2.1]{panaretos2020invitation}.
\item[(ii)]

Further, note that $\widehat{\theta}_t$ defined in \eqref{eq_defn_theta_hat} corresponds to the maximum likelihood estimator for the parametric family of distributions $\PP_{\theta} \sim \operatorname{Exp}\left(\tfrac{1}{\theta}\right), ~\theta \in (0,\infty)$. Indeed, for any $\theta \in (0,\infty)$, let $\Omega^t \ni (\omega_1,\dots,\omega_t)\mapsto L(\omega_1,\dots,\omega_t|\theta):= \prod_{i=1}^t \frac{1}{\theta} e^{-\frac{1}{\theta} \omega_i}$ denote the likelihood function and 
\[
\Omega^t \ni (\omega_1,\dots,\omega_t)\mapsto \ell(\omega_1,\dots,\omega_t|\theta):= \log\left( L(\omega_1,\dots,\omega_t|\theta)\right) =t\cdot  \log\left(\tfrac{1}{\theta}\right)-\frac{1}{\theta}\sum_{i=1}^t \omega_i
\]
the log-likelihood function.
Then, the partial derivative of the log-likelihood function w.r.t.\,$\theta$ is given by 
\begin{align*}
\frac{\partial}{\partial \theta} \ell(\omega_1,\dots,\omega_t|\theta) =-\frac{t}{\theta}+\frac{1}{\theta^2} \sum_{i=1}^t \omega_i
\end{align*}
which vanishes, since $\theta>0$ by assumption, if and only if  $\theta = \frac{1}{t}\sum_{i=1}^t \omega_i$.
\end{itemize}
\end{rem}
%
%
{
\subsection{Modeling of control sets}\label{sec_controls}
The following result provides natural examples of controls which satisfy Assumption~\ref{asu_A}.
\begin{prop}\label{prop_controls}
For any $t  \in \{0,1,\dots,T-1\}$, let $m_t \in \N$, let $A_0\subseteq \R^{m_0}$ be non-empty and compact, and let $\Omega^t\ni \omega^t \twoheadrightarrow \mathcal{A}_t(\omega^t)\subseteq \R^{m_t}$ satisfy one of the following three conditions.
\begin{itemize}
	\item[(i)] $\omega^t\twoheadrightarrow\mathcal{A}_t(\omega^t)=A_t\subseteq \R^d$ is a constant mapping with $A_t\subseteq \R^{m_t}$ being non-empty and compact.
	\item[(ii)] Let $A_t\subseteq \R^{m_t}$ be non-empty, closed, and convex. 
	Let $\widehat{a}_t:\Omega^t\mapsto A_t$ be Lipschitz continuous,
	let $\overline{\varepsilon}_t>0$, and
	let $\varepsilon_t:\Omega^t \mapsto [0,\overline{\varepsilon}_t]$ be Lipschitz continuous.
	Moreover, for any $\omega^t\in \Omega^t$
	  let
	\begin{equation*}
		\cA_t(\omega^t):=\Big\{a \in A_t \, \Big|\,  \Vert a-\widehat{a}_t(\omega^t)\Vert\leq \varepsilon_t(\omega^t)\Big\}.
	\end{equation*}
	\item[(iii)] For any $j=1,\dots, m_t$ let $\underline{a}_{t,j}:\Omega^t\mapsto \R$ and $\overline{a}_{t,j}:\Omega^t\mapsto \R$
	be both Lipschitz continuous and satisfy (pointwise) $\underline{a}_{t,j}\leq \overline{a}_{t,j}$.
	Moreover, for any $\omega^t\in \Omega^t$
	let
	\begin{equation*}
		\cA_t(\omega^t):= \bigtimes_{j=1}^{m_t} \Big[\underline{a}_{t,j}(\omega^t), \overline{a}_{t,j}(\omega^t)\Big]\subseteq \R^{m_t}.
	\end{equation*} 
 \end{itemize}
Then $\mathcal{A}_t$, $t=0,\dots,T-1$, satisfy Assumption~\ref{asu_A}.
\end{prop}
}
%
%
{
\section{Bounding the difference of value functions with and without\\ model uncertainty}\label{sec_RobustVSNonRobust}
In this subsection, we derive (non-asymptotic) upper bounds for the difference between the value function with and without uncertainty, in the case where the model uncertainty is described either by Wasserstein-uncertainty as in Section~\ref{sec_ambiguity_wasserstein} or by parameter uncertainty as in Section~\ref{sec_ambiguity_parametric}. 

More precisely, assume that there is a true probability measure
$\PP^{\rm{TR}}:= \PP_0^{\rm{TR}} \otimes \dots \otimes \PP_{T-1}^{\rm{TR}} \in \mathcal{M}_1(\Omega)$ given by 
\begin{equation}\label{eq_P_integrals-true}
	\PP^{\rm{TR}}(B) =\PP^{\rm{TR}}_0 \otimes \dots \otimes \PP^{\rm{TR}}_{T-1}(B):=\int_{\Omloc} \dots \int_{\Omloc} \one_B(\omega_1, \dots,\omega_T) \PP^{\rm{TR}}_{T-1}(\omega_1,\dots,\omega_{T-1}; \D \omega_T) \dots \PP^{\rm{TR}}_0(\D\omega_1),
\end{equation}
 $B \in \mathcal{F}$, which is unknown to the agent. Moreover, define the corresponding stochastic control problem \textit{without model uncertainty} by its value function
\begin{equation}\label{eq_optimization_problem-true}
	\mathcal{V}^{\rm{TR}}:=\sup_{\mathbf{a} \in \mathfrak{A}}  \E_{\PP^{\rm{TR}}} \left[\Psi(a_0,\dots,a_{T-1})\right], 
\end{equation}
where the utility function $\Psi:\Omega \times \mathcal{A}^T \to \R$ satisfies Assumption~\ref{asu_psi} and where the controls $\mathfrak{A}$ satisfy Assumption~\ref{asu_A}. The agent does not know $\PP^{\rm{TR}}  \in \mathcal{M}_1(\Omega)$, but has some reference measure 
$\widehat\PP:= \widehat\PP_0 \otimes \dots \otimes \widehat \PP_{T-1} \in \mathcal{M}_1(\Omega)$ given for any $B \in \mathcal{F}$ by 
\begin{equation}\label{eq_P_integrals-estimate}
	\widehat\PP(B) =\widehat\PP_0 \otimes \dots \otimes \widehat\PP_{T-1}(B):=\int_{\Omloc} \dots \int_{\Omloc} \one_B(\omega_1, \dots,\omega_T) \widehat\PP_{T-1}(\omega_1,\dots,\omega_{T-1}; \D \omega_T) \dots \widehat\PP_0(\D\omega_1). 
\end{equation}
We think of $\widehat\PP$ as the agent's estimate for the true but unknown probability measure $\PP^{\rm{TR}}$. If the agent is confident that her estimate $\widehat\PP$  reflects the real behavior of the environment accurately, the agent aims to solve the following stochastic control problem \textit{without model uncertainty} with respect to reference measure defined by its value function
\begin{equation}\label{eq_optimization_problem-reference}
	\widehat{\mathcal{V}}:=\sup_{\mathbf{a} \in \mathfrak{A}}  \E_{\widehat\PP} \left[\Psi(a_0,\dots,a_{T-1})\right]. 
\end{equation}
If the agent is not fully confident in her estimate, she may want to solve the stochastic control problem \textit{with model uncertainty}  described in \eqref{eq_optimization_problem-reference}, where in this section the model uncertainty is described either by a Wasserstein-ball around each estimated kernel $\widehat\PP_{t}$ as described in Section~\ref{sec_ambiguity_wasserstein} or by parameter uncertainty as in Section~\ref{sec_ambiguity_parametric}, where in the parametric case we assume that $\widehat{\PP}_t=\PP_{\widehat{\theta}_t}$, $t=0,\dots,T-1$, for reference parameters $\theta_t$, $t=0,\dots,T-1$.

In the following result, we aim to bound the difference between these value functions under the assumption that for each time $t$, the reference kernel $\widehat{\PP}_t$ might not coincide with the true one $\PP^{\rm{TR}}_{t}$, but is a good enough estimate in the sense that $\PP^{\rm{TR}}_{t} \in \mathcal{P}_t$.

To that end, we introduce for each $s=0,\dots,T-1$ and each $t=0,\dots,T-1$   with $t\leq s$  the following quantities via backwards recursion:

\begin{equation}
	\label{eq:def:err-alpha}
\begin{split}
\mbox{For each $s=1,\dots T-1$ set: } \quad \mu^{\rm{err},\alpha}_{s,s}(\omega^s)&:= \big(d_{W_1}\big(\PP^{\rm{TR}}_{s}(\omega^{s}),\widehat{\PP}_{s}(\omega^{s})\big)\big)^\alpha, \qquad \omega^s \in \Omega^s,\\
\mbox{for each $t=s-1,\dots 1$ set: } \quad \mu^{\rm{err},\alpha}_{s,t}(\omega^t)&:= \E_{\PP^{\rm{TR}}_{t}(\omega^{t})}\big[\mu^{\rm{err},\alpha}_{s,t+1}(\omega^{t}\otimes_{t} \cdot)\big], \qquad \omega^t \in \Omega^t,\\
\mu^{\rm{err},\alpha}_{s,0}&:= \E_{\PP^{\rm{TR}}_{0}}\big[\mu^{\rm{err},\alpha}_{s,1}(\cdot)\big],\\
\mu^{\rm{err},\alpha}_{0,0}&:= \big(d_{W_1}\big(\PP^{\rm{TR}}_{0},\widehat{\PP}_{0}\big)\big)^\alpha,
\end{split}
\end{equation}
where $\alpha \in (0,1]$ is the H\"older exponent introduced in Assumption~\ref{asu_psi}~(i).
Moreover, under the setting of Theorem~\ref{thm_wasserstein_ambiguity} or of Theorem~\ref{thm_param_assumptions}, we define the following quantities with respect to the corresponding radii functions $\varepsilon_t:\Omega^t \to [0,\overline{\varepsilon}_t]$, $t=1,\dots T-1,$ and $\varepsilon_0\geq 0$.
\begin{equation}
	\label{eq:def:epsilon-alpha}
	\begin{split}
\mbox{for each $s=1,\dots T-1$ set: } \quad		\mu^{\varepsilon,\alpha}_{s,s}(\omega^s)&:= \big(\varepsilon_s(\omega^{s})\big)^\alpha, \qquad \omega^s \in \Omega^s,\\
		\mbox{for each $t=s-1,\dots 1$ set: } \quad \mu^{\varepsilon,\alpha}_{s,t}(\omega^t)&:= \E_{\PP^{\rm{TR}}_{t}(\omega^{t})}\big[\mu^{\varepsilon,\alpha}_{s,t+1}(\omega^{t}\otimes_{t} \cdot)\big], \qquad \omega^t \in \Omega^t,\\
		\mu^{\varepsilon,\alpha}_{s,0}&:= \E_{\PP^{\rm{TR}}_{0}}\big[\mu^{\varepsilon,\alpha}_{s,1}(\cdot)\big],\\
		\mu^{\varepsilon,\alpha}_{0,0}&:= \varepsilon_0^\alpha.
	\end{split}
\end{equation}
\begin{thm}\label{thm:RobustVSNonRobust}
	Let Assumption~\ref{asu_A}, Assumption~\ref{asu_psi}, and Assumption~\ref{asu_P} hold and let $p \in \N_0$ be the integer from Assumption~\ref{asu_psi}.
	 Then the following holds:
	 \vspace{0.1cm}
	 \begin{enumerate}
	 	\item[(1)]\textbf{Stability:}
	 	
	 	\vspace{0.1cm}
	 	\noindent
Assume additionally to the above that 
the true measure $\PP^{\rm{TR}}:= \PP_0^{\rm{TR}} \otimes \dots \otimes \PP_{T-1}^{\rm{TR}} \in \mathcal{M}_1(\Omega)$ and the reference measure $\widehat\PP:=\widehat\PP_0 \otimes \dots \otimes \widehat\PP_{T-1} \in \mathcal{M}_1(\Omega)$ satisfy for each $t \in \{1,\dots,T-1\}$ 
that 
	$\Omega^t \ni \omega^t \mapsto \widehat{\PP}_t(\omega^t) \in \mathcal{M}_1^{\max\{1,p\}}(\Omloc)$
	and
	 $\Omega^t \ni \omega^t \mapsto \PP^{\rm{TR}}_t(\omega^t) \in \mathcal{M}_1^{\max\{1,p\}}(\Omloc)$ are both Lipschitz continuous with respect to the $d_{W_{1}}(\cdot,\cdot)$-metric 
and some Lipschitz-constants $L_{\widehat{\mathbb{P}},t} \geq 0$ and $L_{\mathbb{P}^{\rm{TR}},t} \geq 0$, respectively, that both $\PP^{\rm{TR}}_0, \widehat{\PP}_0 \in \mathcal{M}_1^{\max\{1,p\}}(\Omloc)$, as well as 
for  each $s=0,\dots,T-1$ and  $t=0,\dots, T-1$ with $t\leq s$  that $\mu_{s,t}^{\rm{err},\alpha}<\infty$. Then
\begin{equation}\label{eq:stability}
	\big|\mathcal{V}^{\rm{TR}}-\widehat{\mathcal{V}}\big|
	\leq
	L_{\Psi} \sum_{s={0}}^{T-1} \bigg[
	\Big(2^{T-(s+1)}  \prod_{u=s+1}^{T-1} \max\left\{L_{\mathcal{A},u}^{\alpha}+L_{\widehat{\PP},u}^{\alpha},1\right\}\Big)
	\cdot
	\mu^{\rm{err},\alpha}_{s,0} \bigg]<\infty.
\end{equation}
	\item[(2)] \textbf{Robust vs.\ non-robust in Wasserstein-uncertainty-case}:
	
	\vspace{0.1cm}
	\noindent
	Assume here additionally to the above that the setting and assumptions of Theorem~\ref{thm_wasserstein_ambiguity} hold with respect to the reference measure  $\widehat\PP:=\widehat\PP_0 \otimes \dots \otimes \widehat\PP_{T-1} \in \mathcal{M}_1(\Omega)$ and with $p<q\in \N$. Moreover, assume  for each $t=1\dots,T-1$  that 	$\Omega^t \ni \omega^t \mapsto \PP^{\rm{TR}}_t(\omega^t) \in \mathcal{M}_1^{\max\{1,p\}}(\Omloc)$ is Lipschitz continuous with respect to the $d_{W_{1}}(\cdot,\cdot)$-metric 
	and some Lipschitz-constant $L_{\mathbb{P}^{\rm{TR}},t} \geq 0$
	and that 
	$\PP^{\rm{TR}}_t(\omega^t)\in \mathcal{B}_{\varepsilon_t(\omega^t)}^{(q)}\big(\widehat{\PP}_t(\omega^t)\big)$ holds for each $\omega^t \in \Omega^t$, as well as that
	$\PP^{\rm{TR}}_0\in \mathcal{B}_{\varepsilon_0}^{(q)}\big(\widehat{\PP}_0\big)$. 
	Then the following holds:
	\begin{equation}\label{eq:upper-bound-Wasserstein}
		0 \leq \mathcal{V}^{\rm{TR}}-\mathcal{V}
		\leq 
		2^{\alpha} L_{\Psi} \sum_{s={0}}^{T-1} \bigg[
		\Big(2^{T-(s+1)}  \prod_{u=s+1}^{T-1} \max\left\{L_{\mathcal{A},u}^{\alpha}+(L_{\widehat{\PP},u}+L_{\varepsilon,u})^{\alpha},1\right\}\Big)
		\cdot
		\mu^{\varepsilon,\alpha}_{s,0} \bigg]<\infty.
	\end{equation} 
	\item[(3)] \textbf{Robust vs.\ non-robust in Parameter-uncertainty-case:}
	
	\vspace{0.1cm}
	\noindent
Assume here additionally to the above that now the setting and assumptions of Theorem~\ref{thm_param_assumptions} hold, that for each $t \in \{1,\dots,T-1\}$ and $\omega^t\in \Omega^t$ we have that $\widehat{\PP}_t(\omega^t)=\PP_{\widehat{\theta}_t(\omega^t)}$ holds with
	$\widehat{\theta}_t: \Omega^t \rightarrow \Theta_t$ being defined in \ref{eq_a3}
	and that $\widehat{\PP}_0=\PP_{\widehat{\theta}_0}$ holds with
	$\widehat{\theta}_0 \in \Theta_0$ being defined in \ref{eq_a5}, as well as
	that for each $t \in \{1,\dots,T-1\}$ and $\omega^t \in \Omega^t$ we have that $\PP^{\rm{TR}}_t(\omega^t)=\PP_{\theta^{\rm{TR}}_t(\omega^t)}$ holds for some
	$\theta^{\rm{TR}}_t: \Omega^t \rightarrow \Theta_t$ being Lipschitz continuous with respect to some Lipschitz-constant $L_{\theta^{\rm{TR}},t}\geq 0$ 
	 satisfying for each $\omega^t\in \Omega^t$ that 
	$\| \theta^{\rm{TR}}_t(\omega^t) - \widehat{\theta}_t(\omega^t)\| \leq \varepsilon_t(\omega^t)$, and that $\PP^{\rm{TR}}_0=\PP_{\theta^{\rm{TR}}_0}$ holds for some $\theta^{\rm{TR}}_0 \in \Theta_0$ satisfying 
		$\| \theta^{\rm{TR}}_0 - \widehat{\theta}_0\| \leq \varepsilon_0$. Then
		\begin{equation}\label{eq:upper-bound-parametric}
		0 \leq \mathcal{V}^{\rm{TR}}-\mathcal{V}
		\leq 
		2^{\alpha} L_{\Psi} \sum_{s={0}}^{T-1} \bigg[
		\Big(2^{T-(s+1)}  \prod_{u=s+1}^{T-1} \max\left\{L_{\mathcal{A},u}^{\alpha}+\big(L_{\PP_{{\theta}},u} \cdot (L_{\widehat{\theta},u} + L_{\varepsilon,u})\big)^{\alpha},1\right\}\Big)
		\cdot
		L_{\PP_{\theta},s}^\alpha
		\cdot
		\mu^{\varepsilon,\alpha}_{s,0} \bigg]<\infty.
	\end{equation}
	 \end{enumerate}	
\end{thm}
}
%
%
{
\section{Comparison to existing literature and our contributions}
\label{sec:Comparison}
In this paper we provide a general framework for multi-period finite discrete time  stochastic optimal control problems under model uncertainty where the utility function is allowed to be non-concave. Our assumptions both on the utility function (see Assumption~\ref{asu_psi}) and on the set of probability measures describing the ambiguity (see Assumption~\ref{asu_P}) are kept fairly general allowing for a broad class of stochastic control problems to fit into our setup.
In our main result, Theorem~\ref{thm_main_result}, we  establish a dynamic programming principle  which allows us to obtain both optimal control and worst-case measure by solving recursively a sequence of one step optimization problems. Moreover, in Theorem~\ref{thm_wasserstein_ambiguity} and Theorem~\ref{thm_param_assumptions} we demonstrate that our general assumptions imposed on the set of priors  are naturally fulfilled by important classes of ambiguity sets such as Wasserstein-balls around reference probability measures and by parametric classes of probability distributions.

Let us compare our result and our assumptions imposed to obtain these results with the literature closest to our work. Multi-period finite discrete time stochastic optimal control problems under model uncertainty have been  solved both for concave utility functions \cite{blanchard2018multiple, carassus2023discrete,  neufeld2018robust, nutz2016utility} and for non-concave utility functions \cite{bayraktar2023nonparametric, bayraktar2022data, carassus2024nonconcave,  neufeld2019nonconcave}, by deriving a dynamic programming principle. The main approach in all of these works is to find suitable conditions on the utility function, the set of priors, and the set of controls which can be propagated through backwards induction  so that the dynamic programming principle (DPP) holds. 

In the papers \cite{blanchard2018multiple, carassus2023discrete,  neufeld2018robust, nutz2016utility} where the utility function is concave, the corresponding authors managed to keep the requirements on the utility function to be minimal. Roughly speaking (using our notation) they only required the utility function $a^T \mapsto \Psi(\omega^T,a^T)$ to be upper-semicontinuous for each $\omega^T \in \Omega$ and that $(\omega^T,a^T) \mapsto \Psi(\omega^T,a^T)$ is either Borel measurable or (even more general) lower semianalytic. To define the set of probability measures describing the ambiguity  they followed the framework of \cite{BouchardNutz} where at each time $t$ a set-valued map of local priors (instead of a single kernel) characterizes the model uncertainty for the next time step.
 This structure is also in line with the condition known as \textit{rectangularity} of the ambiguity sets of kernels (where there the set-valued map typically only depends on the current state and action, whereas \cite{BouchardNutz} allows for path dependency). \cite{blanchard2018multiple, carassus2023discrete,  neufeld2018robust, nutz2016utility} only required the graph of the set-valued map of kernels to be analytic, which is a very minimal measurability condition imposed in order to guarantee the existence of (universally) measurable selectors (see, e.g., \cite[Proposition~7.49, p.182]{bertsekas1996stochastic}).

In the papers \cite{carassus2024nonconcave,  neufeld2019nonconcave} analyzing the non-concave case, they followed the approach of \cite{blanchard2018multiple, carassus2023discrete,  neufeld2018robust, nutz2016utility} to formulate as minimal regularity conditions on the utility function and the priors. Now a priori, not concavity is the key assumption to get DPP to hold, but rather continuity conditions which can be propagated through backwards induction in order to obtain, e.g., measurable selectors. However, the release of the concavity  assumption on the utility function naturally reduces the regularity,
which increases the difficulty to obtain suitable measurable selectors in order to derive the dynamic programming principle. As a consequence,  \cite{neufeld2019nonconcave} had to assume that set of controls take values on a grid (in particular a discrete set), whereas \cite{carassus2024nonconcave} assumed the set-theoretical axiom of
\textit{Projective Determinacy (PD)} to hold, an axiom  which cannot be proved in the usual Zermelo-Fraenkel set theory with the Axiom of Choice (ZFC), in order to get suitable selectors.

To overcome the difficulty for the non-concave case, our approach is to assume more regularity on the utility function (namely to be H\"older continuous), as well as on the set-valued map of (local) priors within the framework of \cite{BouchardNutz}, namely by assuming that they are continuous (as set-valued maps) and compact valued. From a technical point of view, we introduce a new stability/continuity condition on the set-valued map of local priors (see Assumption~\ref{asu_P}~(iii)) which allows to propagate the H\"older continuity of the value functions $\Psi_t$ at each time $t=T,\dots, 1,0$ through backward induction. While Assumption~\ref{asu_P}~(iii) naturally appears in the proof for the backward propagation of the H\"older continuity, it is one of our main contributions of this paper to demonstrate that Assumption~\ref{asu_P}~(iii) actually naturally holds true for relevant examples of set of priors such as the Wasserstein-ball around a (possibly) path-dependent reference measure with (possibly) path-dependent radius (see Theorem~\ref{thm_wasserstein_ambiguity}) as well as parametric classes of distributions (see Theorem~\ref{thm_param_assumptions}).

Now let us comment on the set of controls. All of the related literature \cite{blanchard2018multiple, carassus2023discrete, carassus2024nonconcave, neufeld2018robust, neufeld2019nonconcave, nutz2016utility} do not assume any constraints on the values of the strategies, whereas we  impose that the set-valued maps describing the possible values of the controls 
are compact-valued. That said, compared to the literature above, we do not impose any no-arbitrage condition. More precisely, while \cite{blanchard2018multiple, carassus2023discrete, carassus2024nonconcave, nutz2016utility} imposed some robust no-arbitrage conditions which can be seen as the natural extensions of classical no-arbitrage conditions in financial markets without model uncertainty, \cite{neufeld2018robust, neufeld2019nonconcave} introduced
general linearity type conditions which can be seen as the robust analog of the linearity type conditions introduced in \cite{pennanen2011convex, pennanen2012stochastic} in settings without model uncertainty. While these linearity type conditions are closely related to no-arbitrage conditions in financial markets, topologically speaking they ensure that one can restrict one-selves to strategies taking values on a compact set, see, e.g., \cite[Lemma~2.7]{nutz2016utility} or \cite[Proposition]{neufeld2018robust}. This basically follows from the following 
observations in optimization \cite[Theorem 9.2]{Rockafellar.97} and \cite[Theorem~3.31 \& Theorem~1.17]{RockafellarWets2009variational}, see also the discussion in \cite[Remark~2.6]{neufeld2018robust}.

Moreover, previous to our work, \cite{bayraktar2023nonparametric, bayraktar2022data} introduced a framework for data-driven adaptive robust control problems. In the case where the set-valued maps of kernels consist of Wasserstein-balls around path-dependent reference measures (see Theorem~\ref{thm_wasserstein_ambiguity}), we can basically recover the setup of \cite{bayraktar2023nonparametric, bayraktar2022data}, but where our setting allows the radii of the Wasserstein-balls also to be random and path-dependent, instead of being deterministic as in \cite{bayraktar2023nonparametric, bayraktar2022data}.
We  refer to Remark~\ref{rem:adaptive-robust-control} for some further comparison to  \cite{bayraktar2023nonparametric, bayraktar2022data}.

Furthermore, \cite{bauerle2021q,chen2019distributionally,neufeld2023mdp,uugurlu2018robust,xu2010distributionally} have analyzed Markov decision problems under model uncertainty. This can be seen as the \textit{infinite-time} analog of this work in the special case where the utility function~$\Psi$ is of the form  
\begin{equation}\label{eq:MDP-utility}
	\Psi(\omega_1,\omega_2,\dots,a_0,a_1,\dots,)=\sum_{t=0}^\infty \gamma^t r(\omega_t,a_t,\omega_{t+1}),
\end{equation} 
where $\gamma \in (0,1)$ and $r(\cdot,\cdot,\cdot)$ is some reward function. In \cite{chen2019distributionally,xu2010distributionally}, the authors assumed that both state and action space is finite.   \cite{bauerle2021q} assumed that the ambiguity set of probability measures is \textit{dominated}, whereas \cite{uugurlu2018robust} considered so called \textit{conditional risk mappings} on an atomless probability space. The closest work on Markov decision problems under model uncertainty to this paper is \cite{neufeld2023mdp}. 
Similarly to this work, \cite{neufeld2023mdp} assumed Lipschitz-continuity type of regularity on the reward function $r(\cdot,\cdot,\cdot)$. Moreover, the set-valued map of (local) kernels were also assumed in \cite{neufeld2023mdp} to be  compact-valued and continuous,  but in \cite{neufeld2023mdp} the set valued-map only takes the current state and action as input, not the whole path. Moreover, the values of the controls in \cite{neufeld2023mdp} are assumed to take values in a \textit{fixed} compact set, whereas in this work the controls are allowed to take values in a compact valued \textit{correspondence} which is path-dependent.
Moreover, we highlight that due to the different nature of the  infinite time stochastic control problem with utility function \eqref{eq:MDP-utility}, the corresponding Bellman equation consists only of a single one-step (local) optimization involving a fixed-point.
Hence one does not have to solve $T$-many different one-step optimization problems through backward induction. In particular, one does not need to find suitable conditions on the utility function, the set of priors, and the set of controls which can be propagated through backward induction so that DPP holds. As a consequence, a condition on the priors such as Assumption~\ref{asu_P}~(iii) was not necessary in \cite{neufeld2023mdp}. It is one of the technical contributions of this paper to realize that Assumption~\ref{asu_P}~(iii) is a good condition allowing to backward propagate the $\alpha$-H\"older continuity of the utility function $\Psi$, while at the same time realizing that Assumption~\ref{asu_P}~(iii) naturally holds true for relevant examples for set of priors (see Theorem~\ref{thm_wasserstein_ambiguity} and Theorem~\ref{thm_param_assumptions}). 

Next, in Section~\ref{sec_RobustVSNonRobust} we obtain an upper bound for the difference between the values of the (non-robust) stochastic control problems defined with respect to the true (but to the agent unknown) probability measure $\PP^{\rm{TR}}:= \PP_0^{\rm{TR}} \otimes \dots \otimes \PP_{T-1}^{\rm{TR}} \in \mathcal{M}_1(\Omega)$ and the reference measure $\widehat\PP:= \widehat\PP_0 \otimes \dots \otimes \widehat \PP_{T-1} \in \mathcal{M}_1(\Omega)$, respectively. 
The upper bound is expressed in terms of the estimation error of the true kernel $\PP^{\rm{TR}}_t$ at each time~$t$  defined as the (integrated) $d_{W_1}$-distance of $\PP_{t}^{\rm{TR}}$ and its estimate $\widehat \PP_{t}$, see Theorem~\ref{thm:RobustVSNonRobust}(1).
Moreover, in the setting of Wasserstein-uncertainty (see Theorem~\ref{thm_wasserstein_ambiguity}) or the setting of parameter-uncertainty (see Theorem~\ref{thm_param_assumptions}), we obtain an upper bound for the difference of the value of the robust stochastic control  problem and the non-robust stochastic control problem defined with respect to the reference measure $\widehat\PP$, see Theorem~\ref{thm:RobustVSNonRobust}~(2)~\&~(3). Theorem~\ref{thm:RobustVSNonRobust}~(1) hence can be seen as the path-dependent generalization of such stability results obtained  for Markov decision problems (MDP) with corresponding utility function of the form \eqref{eq:MDP-utility} 
in a finite horizon setting  (\cite[Theorem~2.2.8]{kern2020sensitivity}, \cite[Theorem~6.2]{zahle2022concept}, and \cite[Theorem~3]{kiszka2022stability}))
and in an infinite horizon setting (\cite[Theorem~4.2]{muller1997does}). Moreover, the upper bound  in Theorem~\ref{thm:RobustVSNonRobust}~(2) for the difference between the values of the robust stochastic control problem and the non-robust stochastic control problem 
 when model uncertainty is described by Wasserstein-balls around the reference kernel at each time~$t$ can be seen as the path-dependent generalization in a finite horizon setting of the upper bound obtained in \cite[Theorem~3.1]{neufeldsester2024upperbound}.
In addition, \cite{bartl2022sensitivity} has analyzed a stochastic control problem in a finite horizon setting where model uncertainty is described via a ball around the reference measure defined with respect to the \textit{adapted} Wasserstein distance. In their main result \cite[Theorem~2.4]{bartl2022sensitivity}, they showed that \textit{asymptotically} when the radius of the ball tends to zero, the value of the robust stochastic control problem asymptotically equals to
the value of the non-robust problem
plus an explicit correction term. Roughly speaking, this result hence can be seen in spirit as a similar result to our Theorem~\ref{thm:RobustVSNonRobust}(2) in the particular situation where our reference measure coincides with the true probability measure.
Furthermore, to the best of our knowledge, we are not aware of any result in the literature that has analyzed the difference between the values of the robust stochastic control problem and the non-robust stochastic control problem in the parameter-uncertainty-case similar to our result obtained in Theorem~\ref{thm:RobustVSNonRobust}(3).

Finally, we highlight that our approach heavily relies on \textit{time-consistency} of the stochastic control problem due to our approach of solving it using dynamic programming principle. Therefore, relevant \textit{time-inconsistent} stochastic optimization problems such as, e.g., multi-period mean-variance portfolio optimization (see, e.g., \cite{hakansson1971multi}) or multi-period portfolio selection under cumulative prospect theory (see, e.g., \cite{shi2015discrete}) cannot (directly) be covered in our setting. We leave this for future research to see how one can solve such multi-period time-inconsistent stochastic  optimal control problems under model uncertainty.
}
%
%
\section{{Application: Data-driven hedging with asymmetric loss function}}\label{sec_applications}

\subsection{Setting to data-driven hedging with asymmetric loss function}\label{subsec_hedging}
We consider {$d \in \N$} underlying financial {assets} attaining values $S_{0} ,S_{1},\dots,S_{T} { \in \R^d}$ over the future time horizon $0, 1,\dots, T$ as well as some financial derivative with payoff $\Phi(S_0,\dots,S_T)$ paid at maturity $T$. We face the problem of having sold the derivative at initial time $0$ and therefore being exposed to possibly unexpected high   payoff obligations $\Phi(S_0,\dots,S_T)$. To reduce this risk, we are interested in \emph{hedging} the payoff obligation $\Phi(S_0,\dots,S_T)$ by investing in the underlying asset such that our potential exposure is reduced, compare also, e.g. \cite{hull1993options} for further financial background on hedging of financial derivatives.

We start by modeling an ambiguity set of probability measures and follow to this end the approach outlined in Section~\ref{sec_ambiguity_wasserstein}. The underlying asset returns in the time period between $t-1$ and $t$ are given by
\[
\mathcal{R}_{t}:=\frac{S_{t}-S_{t-1}}{S_{t-1}}\in \Omloc\subseteq { \R^d}, \qquad t \in \{1,\dots,T\},
\]
where $\Omloc ={[-C,C]^d \subset  \R^d}$ for some constant value $C>0$.
To construct ambiguity sets of probability measures {to model the evolution of the returns}, we consider a time series of historically realized returns 
\begin{equation}\label{eq_time_series_returns}
\left(\mathscr{R}_1,\dots,\mathscr{R}_{N} \right) \in \Omloc^{\hspace{-0.4cm}N},\qquad \text{ for some } N \in \N.
\end{equation}
Relying on the time series from \eqref{eq_time_series_returns}, we design ambiguity sets $\mathcal{P}_t$, $t=0,\dots,T-1$. To this end, we define $\widehat{\PP}_0$ through a sum of Dirac-measures given by
\begin{equation}\label{eq_defn_phat_portfolio_1}
\widehat{\PP}_0(\D x):= \frac{1}{N}\sum_{s=1}^{N}\delta_{\mathscr{R}_{s}}(\D x)\in \mathcal{M}_1(\Omloc ),
\end{equation}
and $\widehat{\PP}_t$ for $t=1,\dots,T-1$ by
\begin{equation}\label{eq_defn_phat_portfolio_2}
\Omega^t \ni \omega^t=(\omega_1,\dots,\omega_t) \mapsto \widehat{\PP}_t\left(\omega^t\right) (\D x):= \sum_{s=t}^{N-1}\pi_s^t(\omega^t) \cdot \delta_{\mathscr{R}_{s+1}}(\D x)\in \mathcal{M}_1(\Omloc),
\end{equation}
where $\pi_s^t(\omega^t)\in [0,1]$, $s =t,\cdots,N-1$ with $\sum_{s=t}^{N-1}\pi_s^t(\omega^t) = 1$. We want to weight the distance between the past $t$ returns  before $\mathcal{R}_{t+1}$ and  the $t$ returns before $\mathscr{R}_{s+1}$, while assigning higher probabilities to more similar sequences of $t$ returns. This means, the measure {$\widehat{\PP}_t$ assigns the highest probability to that historical return that followed the sequence of historical returns which resembles the current market situation the most. Mathematically this construction is akin to kernel regression methods from time series analysis (see, e.g., \cite{badiane2018kernel}) where nearby points (in our case similar sequences of returns) are assumed to imply similar future evolution of returns}. To this end, we set
\begin{equation*}\label{eq_defn_ps_empirical}
\Omega^t \ni \omega^t \mapsto \pi_s^t(\omega^t):= \left(\frac{\exp(-\beta \operatorname{dist}_s(\omega^t)^2)}{\sum_{\ell=t}^{N-1}\exp(-\beta\operatorname{dist}_{\ell}(\omega^t)^2)}\right),
\end{equation*}
for\footnote{ In our experiments, we chose an inverse temperature parameter of $\beta=500$.} { inverse temperature parameter} $\beta >0$ and with
\begin{align*}
\operatorname{dist}_s(\omega^t):=\left\|\left(\mathscr{R}_{s-t+1},\cdots,\mathscr{R}_{s}\right)-\omega^t\right\|=\left\|\left(\mathscr{R}_{s-t+1},\cdots,\mathscr{R}_{s}\right)-\left(\omega_1,\cdots,\omega_t\right)\right\|,~~ 
\end{align*}
for all $s=t,\cdots,N-1$. Then, for any\footnote{ We choose for our numerical results here $\varepsilon$ to be constant and analyze various choices of $\varepsilon$ empirically.} {$\varepsilon \in [0,\infty)$ } we define ambiguity sets of probability measures via
\begin{equation}\label{eq_P_hedging}
\begin{aligned}
\Omega^t \ni \omega^t \mapsto \mathcal{P}_t\left({\omega}^{t}\right):=\mathcal{B}_{\varepsilon}^{(1)} \left(\widehat{\PP}_t(\omega^t)\right)&\text{ for } t \in \{1,\dots,T-1\},\\
 \mathcal{P}_0:= \mathcal{B}_{\varepsilon}^{(1)} \left(\widehat{\PP}_0\right)&\text{ for }t =0,
\end{aligned}
\end{equation}
where $\mathcal{B}_{\varepsilon}^{(1)} (\widehat{\PP})$ denotes a Wasserstein-ball of order $1$ with radius $\varepsilon$ around some probability measure $\widehat{\PP}$, as defined in Theorem~\ref{thm_wasserstein_ambiguity}.

As our assumptions on the objective function $\Psi$ formulated in Assumption~\ref{asu_psi} do not require the objective function to be concave, we are able to consider non-concave objective functions as they appear in behavioral economics and, in particular, in prospect theory (\cite{kahneman1979prospect}). To this end, we consider a {loss function} of the form
\begin{equation}\label{eq_u_prospect}
\R \ni x \mapsto U(x):= x^{\mathfrak{{a}}}\one_{\{x \geq 0\}}+ \mathfrak{b} \cdot (-x)^{\mathfrak{a}}\one_{\{x < 0\}} \qquad \text{ where } \mathfrak{a} \in (0,1), \mathfrak{b}>1,
\end{equation}
which assigns the value $x^{\mathfrak{a}}$ to positive inputs $x$ and $\mathfrak{b}x^{\mathfrak{a}}$ to negative inputs $x$, see also \cite{tversky1989rational} or \cite{weber2009decisions}, and compare the illustration in Figure~\ref{fig_s_shape}. Considering an asymmetric loss function as in \eqref{eq_u_prospect} accounts for the empirical fact that agents exhibit distinct behaviors in response to gains and losses, namely they are much more sensitive with respect to losses.
 Experimental studies reported in \cite{tversky1992advances} provide estimates of $\mathfrak{a}=0.88$, $\mathfrak{b} = 2.25$.
\begin{figure}[h!]
\centering
\includegraphics[scale=0.3]{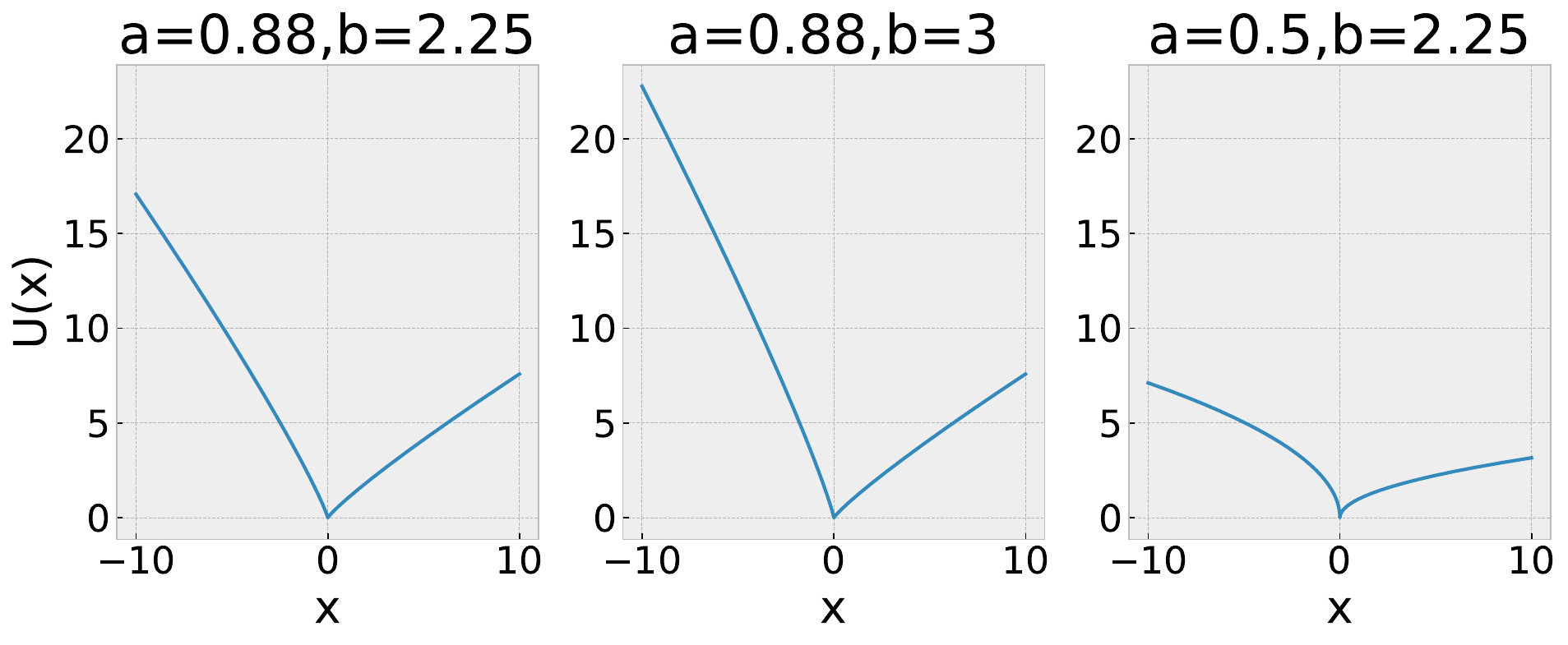}
\caption{ The three plots show the loss function described in \eqref{eq_u_prospect} for different choices of the parameters $\mathfrak{a}$ and $\mathfrak{b}$. }\label{fig_s_shape}
\end{figure}

To define our optimization problem, we consider some financial derivative with payoff function $\Phi(S_0,S_1,\dots,S_T)$. The agent's goal is to find a hedging strategy, i.e., a combination of an initial cash position $d_0$ and time-dependent positions { $(\Delta_{j})_{j=0,\dots,d}:=(\Delta_j^i)_{i=1,\dots,d, \atop j=0,\dots,T-1}$} invested in the underlying {assets}
minimizing the \emph{hedging error}
between the time-$T$ value of the self-financing hedging strategy given by 
{
\[
d_0+\sum_{i=1}^d \sum_{j=0}^{T-1}  \Delta_j^i (S_{j+1}^i-S_j^i) = d_0+\sum_{i=1}^d\sum_{j=0}^{T-1}   \Delta_j^i \left\{S_0^i \left(\prod_{k=1}^{j}(\omega_k^i+1)\right)\cdot \omega_{j+1}^i\right\}
\]
}
and the payoff of $\Phi$, { where we have used for all $t=0,\dots,T$ the notation $S_t = (S_t^i)_{i=1,\dots,d}$ as well as $\omega_t = (\omega_t^i)_{i=1,\dots,d} \in \Omloc$, and that $\omega_t = \mathcal{R}_t = \frac{S_t-S_{t-1}}{S_t}$ for each $\omega_t \in \Omloc$} . To this end, we identify the financial positions of the agent with her actions and define for some $\overline{B}>0$ and $\overline{A}>0$ the { (here path-independent)} sets of actions $({\mathcal{A}}_T)_{t=0,\dots,T-1}$ by 
{
\begin{align*}
{\mathcal{A}}_0&:= \left\{(d_0,(\Delta_0^i)_{i=1,\dots,d}) \in [-\overline{B},\overline{B}]\times [-\overline{A},\overline{A}]^d \right\},\\
{\mathcal{A}}_t&:= \left\{(\Delta_t^i)_{i=1,\dots,d} \in [-\overline{A},\overline{A}]^d \right\},~\text{ for }~t=1,\dots,T-1.
\end{align*}
} {Then we} introduce the objective function
\begin{equation}\label{eq_Psi_hedging}
\begin{aligned}
\Omega \times {\mathcal{A}}^T \ni ({\omega^T, a^T}) \mapsto \Psi({\omega^T, a^T}):= -{{U}\Bigg(d_0+\sum_{i=1}^d \sum_{j=0}^{T-1}  \Delta_j^i (S_{j+1}^i-S_j^i)}-\Phi(S_0,S_1,\dots,S_T)\Bigg),
\end{aligned}
\end{equation}
where we abbreviate {$a^T = \left(d_0,(\Delta_0^i)_{i=1,\dots,d} , (\Delta_1^i)_{i=1,\dots,d},\dots,(\Delta_{T-1}^i)_{i=1,\dots,d} \right) \in {\mathcal{A}_0 \times \cdots \times  \mathcal{A}_{T-1}}$, and where $S_t = S_0\prod_{n=1}^t(\omega_n+1)$ for each $t=1,\dots,T$ with $S_0 \in \R^d$.}

Solving $\sup_{\mathbf{a} \in \mathfrak{A}} \inf_{\PP \in \mathfrak{P}} \E_{\PP} \left[\Psi(a_0,\dots,a_{T-1})\right]$ with $\Psi$ as defined in \eqref{eq_Psi_hedging} corresponds to minimizing the distributionally robust expected hedging error induced by {$U$} between the outcome of a self-financing hedging strategy and some financial derivative {with payoff function $\Phi$}. 
Considering the situation of a financial institution which has sold the derivative $\Phi$, the asymmetry of {$U$}  accounts for the fact that a hedging-strategy that leads to higher payoffs than the derivative is considered a better outcome than a hedging-strategy that leads to smaller payoffs than the derivative payoff and therefore to unexpected losses.  We refer to  \cite{broll2010prospect}, \cite{carbonneau2023deep}, and  \cite{gobet2020option} for related literature on asymmetric hedging, but without model uncertainty. 

{ Moreover, we compare our proposed approach (i.e., the ambiguity sets of probability measures, defined in \eqref{eq_P_hedging}) with a robust adaptive hedging strategy, recently introduced in \cite{bayraktar2023nonparametric}. To construct ambiguity sets, \cite{bayraktar2023nonparametric} proposed to use the empirical distribution as a reference measure, i.e.,
 \begin{equation}\label{eq_defn_phat_portfolio_adaptive}
 \begin{aligned}
\Omega^t \ni \omega^t=(\omega_1,\dots,\omega_t) \mapsto \widehat{\PP}^{\rm{ada.}}_t\left(\omega^t\right) (\D x):&=  \frac{1}{N+t}\left(\sum_{s=1}^{N} \delta_{\mathscr{R}_{s}}(\D x)+\sum_{i=1}^{t} \delta_{\omega_i}(\D x)\right)\in \mathcal{M}_1(\Omloc), \\
\widehat{\PP}^{\rm{ada.}}_0(\D x):&= \frac{1}{N}\sum_{s=1}^{N}\delta_{\mathscr{R}_{s}}(\D x)\in \mathcal{M}_1(\Omloc ),
 \end{aligned}
\end{equation}
and then to use a Wasserstein ball around the reference measure \eqref{eq_defn_phat_portfolio_adaptive} with a radius that decreases over time:\begin{equation}\label{eq_P_hedging_adaptive}
\begin{aligned}
\Omega^t \ni \omega^t \mapsto \mathcal{P}^{\rm{ada.}}_t\left({\omega}^{t}\right):=\mathcal{B}_{\varepsilon_t}^{(1)} \left(\widehat{\PP}^{\rm{ada.}}_t(\omega^t)\right)&\text{ for } t \in \{1,\dots,T-1\},\\
 \mathcal{P}^{\rm{ada.}}_0:= \mathcal{B}_{\varepsilon_0}^{(1)} \left(\widehat{\PP}^{\rm{ada.}}_0\right)&\text{ for }t =0.
\end{aligned}
\end{equation}
We specify the explicit choices of $\varepsilon_t$ in Section~\ref{sec:heging1dim} and \ref{sec:heging5dim}, respectively.}

{
\begin{prop}\label{prop_assumptions_exa1}
Assume the framework from Section~\ref{subsec_hedging} and let $p=0$. Then, the ambiguity sets of probability measures defined in \eqref{eq_P_hedging} and \eqref{eq_P_hedging_adaptive} both satisfy Assumption~\ref{asu_P}. Moreover, if the payoff function of the derivative $\Phi: \R^{d(T+1)} \rightarrow \R$ is $\beta$-Hölder-continuous for some $\beta \in (0,1]$, then $\Psi$, defined in \eqref{eq_Psi_hedging}, satisfies Assumption~\ref{asu_psi}.
\end{prop}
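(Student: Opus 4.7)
The plan for the first assertion is to reduce it to an application of Proposition~\ref{prop_wasserstein_ambiguity} with the parameters $q=1$ and $p=0$ (so $p<q$ as required). The state space $\Omloc=[-C,C]$ is convex, and since both $\widehat{\PP}_0$ and $\widehat{\PP}_t(\omega^t)$ are supported in the bounded interval $\Omloc$ they lie trivially in $\mathcal{M}_1^1(\Omloc)$ with uniformly bounded first moment. What remains to check is that the path-dependent reference measure $\omega^t\mapsto \widehat{\PP}_t(\omega^t)$ is Lipschitz with respect to $\operatorname{d}_{W_1}$.

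For this key step I would exploit the fact that $\widehat{\PP}_t(\omega^t)$ and $\widehat{\PP}_t(\widetilde{\omega}^t)$ are discrete probability measures supported on the \emph{same} atoms $\{\mathscr{R}_{s+1}\}_{s=t}^{N-1}\subseteq[-C,C]$, differing only in their softmax weights $\pi_s^t(\cdot)$. By Kantorovich--Rubinstein duality and the identity $\sum_s(\pi_s^t(\omega^t)-\pi_s^t(\widetilde{\omega}^t))=0$, any $1$-Lipschitz test function $f$ can be recentered by subtracting $f(\mathscr{R}_{t+1})$, which on $[-C,C]$ gives the pointwise bound $|f(\mathscr{R}_{s+1})-f(\mathscr{R}_{t+1})|\leq 2C$ and hence
\[
\operatorname{d}_{W_1}\bigl(\widehat{\PP}_t(\omega^t),\widehat{\PP}_t(\widetilde{\omega}^t)\bigr)\leq 2C\sum_{s=t}^{N-1}\bigl|\pi_s^t(\omega^t)-\pi_s^t(\widetilde{\omega}^t)\bigr|.
\]
Each $\pi_s^t$ is a smooth softmax combination of $\omega^t\mapsto\exp(-\beta\operatorname{dist}_s(\omega^t)^2)$; since $\operatorname{dist}_s$ is $1$-Lipschitz, the exponentials are bounded, and the softmax denominator is bounded away from $0$ uniformly on the compact domain $\Omega^t$, each $\pi_s^t$ is Lipschitz on $\Omega^t$ with a constant depending only on $\beta$, $N$, $t$, and $C$. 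Combining the two estimates yields the required global Lipschitz bound $L_B$, and Proposition~\ref{prop_wasserstein_ambiguity} then concludes part one.

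For the second assertion I verify Assumption~\ref{asu_psi} directly. The function $V$ from \eqref{eq_v_prospect} is globally Lipschitz on $\R$ by construction, since its one-sided derivatives are uniformly bounded by a constant depending only on $a$, $b$, $\delta$. By hypothesis $\Phi$ is Lipschitz, and on the bounded domain $\Omega=[-C,C]^T$ each cumulative-product map $\omega\mapsto S_j(\omega)=S_0\prod_{k=1}^j(1+\omega_k)$ is Lipschitz. Moreover, for each fixed $\omega\in\Omega$ the map $a\mapsto d_0+\sum_j\Delta_j(S_{j+1}-S_j)$ is affine with coefficients bounded uniformly on $\Omega$, which gives joint Lipschitz continuity in $(\omega,a)$. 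Composing these Lipschitz maps produces the Lipschitz constant $L_\Psi$ required in (i). For (ii), with $p=0$ the polynomial-growth bound reduces to uniform boundedness of $\Psi$, which is immediate from continuity of $\Psi$ on the compact set $\Omega\times A^T$. The principal obstacle is the Lipschitz estimate for the softmax weights $\pi_s^t$, since this is where the precise nonlinear structure of the data-driven reference measures interacts with the Wasserstein geometry; once this is in place, everything else reduces to Proposition~\ref{prop_wasserstein_ambiguity} together with standard composition rules for Lipschitz maps.
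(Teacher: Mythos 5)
Your proposal is correct and follows essentially the same route as the paper: reduce the first claim to Proposition~\ref{prop_wasserstein_ambiguity} with $q=1$, $p=0$, prove $\operatorname{d}_{W_1}$-Lipschitz continuity of $\omega^t\mapsto\widehat{\PP}_t(\omega^t)$ via Kantorovich--Rubinstein duality together with Lipschitz continuity of the softmax weights $\pi_s^t$ on the compact domain, and verify Assumption~\ref{asu_psi} by composing Lipschitz maps on the compact set $\Omega\times A^T$. The only cosmetic differences are that you recenter the test function at $f(\mathscr{R}_{t+1})$ (using that the weight differences sum to zero) where the paper recenters at $f(0)$, and you bound the softmax weights via a lower bound on the denominator rather than via bounded partial derivatives; both yield the same conclusion.
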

}
\subsection{Numerical method} \label{sec_numerics}
To solve our optimization problem \eqref{eq_optimization_problem} numerically, we apply the dynamic programming principle from Theorem~\ref{thm_main_result}.
The numerical routine which is summarized in Algorithm~\ref{algo_1} approximates optimal actions $(a_t^*)_{t=0,\dots,T-1}$ as defined in Theorem~\ref{thm_main_result} by using deep neural networks.

\begin{algorithm}[h!]
\SetAlgoLined
\SetKwInOut{Input}{Input}
\SetKwInOut{Output}{Output}

\Input{Hyperparameters for the neural networks; Number of  iterations $\operatorname{Iter}_\Psi$  for the improvement of each function $\Psi_t$; Number of iterations  $\operatorname{Iter}_a$ for the improvement of the action function; Number of measures  $N_{\mathcal{P}}$; Number  of Monte-Carlo simulations $N_{\operatorname{MC}}$; State space $\Omloc$; {Path-dependent action spaces $(\mathcal{A}_t)_{t=0,\dots,T-1}$ with $\mathcal{A}_t \subseteq \R^{m_t}$ for all $t$}; Payoff function $\Psi$;}
Set ${\mathcal{NN}_{\Psi,T}} \equiv \Psi$;\\
\For{$t=T-1,\dots,0$}{
Initialize a neural network ${\mathcal{NN}_{\Psi,t}} : (\R^d)^t \times (\R^{m_0} \times \cdots\R^{m_{t-1}})  \rightarrow \R $;\\
Initialize a neural network  ${\mathcal{NN}_{a,t}} : (\R^d)^t \times  (\R^{m_0} \times \cdots\R^{m_{t-1}})  \rightarrow {\R^{m_t}} $;\\

\For{$\operatorname{iteration} =1,\dots, \operatorname{Iter}_a $}{
Sample $(\omega_1,\dots,\omega_t)=\omega^t \in \Omega^t$; \\

Sample $(a_0,\dots,a_{t-1})=a^t \in {\mathcal{A}^t(\omega^t)}$;\\
\For{$k=1,\dots,N_{\mathcal{P}}$}{
Sample next states
$w^{t+1, (k),(i)} \sim \mathbb{P}_k \in \mathcal{P}_t(\omega^t)$ for $i= 1,\dots,N_{MC}$
}
Maximize
\begin{equation}\label{eq:max_step_algo1}
\begin{aligned}
&\min_k \frac{1}{N_{MC}}\sum_{i=1}^{N_{MC}} {\mathcal{NN}_{\Psi,t+1}}\left((\omega^t, w^{t+1, (k),(i)}),~\left(a^t,{\mathcal{NN}_{a,t}} (\omega^t,a^t)\right)\right) \\
\end{aligned}
\end{equation}
 w.r.t.\,the parameters of the neural network $\mathcal{NN}_{a,t}$;
}

\For{$\operatorname{iteration} =1,\dots,\operatorname{Iter}_\Psi $}{
Sample $(\omega_1,\dots,\omega_t)=\omega^t \in \Omega^t$; \\

Sample $(a_0,\dots,a_{t-1})=a^t \in {\mathcal{A}^t(\omega^t)}$;\\
\For{$k=1,\dots,N_{\mathcal{P}}$}{
Sample next states
$w^{t+1, (k),(i)} \sim \mathbb{P}_k \in \mathcal{P}_t(\omega^t)$ for $i= 1,\dots,N_{MC}$
}
Minimize 
\begin{equation}\label{eq:min_step_algo1}
\left({\mathcal{NN}_{\Psi,t}}(\omega^t, a^t)- \min_k \frac{1}{N_{MC}}\sum_{i=1}^{N_{MC}} {\mathcal{NN}_{\Psi,t+1}}\left((\omega^t, w^{t+1, (k),(i)}),~\left(a^t, {\mathcal{NN}_{a,t}} (\omega^t,a^t)\right)\right)  \right)^2
\end{equation}
 w.r.t.\,the parameters of the neural network  ${\mathcal{NN}_{\Psi,t}}$;
}
}
Define $a_0: = {\mathcal{NN}_{a}^0} $;
\For {$t=1,\dots,T-1$}{
Define 
\begin{equation}\label{eq:composition_optimal_action}
\Omega^t \ni \omega^t =(\omega_1,\dots,\omega_{t}) \mapsto a_t(\omega^t):= {\mathcal{NN}_{a,t}}\left(\omega^t,~\left(a_0,\dots,a_{t-1}(\omega_1,\dots,\omega_{t-1}\right)\right) \in {\mathcal{A}^t(\omega^t)};
\end{equation}
}
\Output{Actions $(a_t)_{t=0,\dots,T-1}$;}
 \caption{Training of optimal actions}\label{algo_1}
\end{algorithm}

{
The idea of Algorithm~\ref{algo_1} is to approximate recursively the quantities $J_t$ defined in \eqref{eq_defn_J_t} and $\Psi_t$ defined in \eqref{eq_defn_Psi_t} through deep neural networks. To this end, at all times $t=T-1,\dots,0$ one represents $\Psi_{t+1}$ by a neural network, and approximates the expectation $\inf_{\PP \in \mathcal{P}_t(\omega^t)} \E_{\PP}\left[\Psi_{t+1}\left(\omega^t \otimes_t \cdot, a^{t+1}\right)\right]$ via Monte-Carlo simulation, compare \eqref{eq:max_step_algo1}, where the infimum over probability measures is approximated by picking $N_{\mathcal{P}}$ probability measures from the ambiguity set $\mathcal{P}_t$ and then performing  Monte-Carlo simulation for each of the considered measures. The minimum of the $N_{\mathcal{P}}$ Monte-Carlo outcomes is then the resultant approximation of $J_t$ given path-action pairs $(\omega^t, a^{t+1})$. To obtain an approximation of $\Psi_t(\omega^t,a^t):= \sup_{\widetilde{a} \in \mathcal{A}_t} J_t\left(\omega^t, (a^t,\widetilde{a})\right)$, one first parameterizes controls from $\mathcal{A}_t$ by a neural network $ {\mathcal{NN}_{a,t}} (\omega^t,a^t)$, and  optimizes $\widehat{J}_t\left(\omega^t, ~\left(a^{t}, {\mathcal{NN}_{a,t}} (\omega^t,a^t)\right)\right)$ w.r.t.\,the parameters of  $\mathcal{NN}_{a,t}$ on several samples $(\omega^t,a^t)$, e.g., via stochastic gradient descent, where $\widehat{J}_t$ denotes the approximation of $J_t$ computed in \eqref{eq:max_step_algo1}.

In a next step one assigns the computed approximation $\widehat{J}_t\left(\omega^t, ~\left(a^{t}, {\mathcal{NN}_{a,t}} (\omega^t,a^t)\right)\right)$ to a neural network $\mathcal{NN}_{\Psi,t}$. This is done in \eqref{eq:min_step_algo1} via least square minimization on a batch of samples of path-action pairs $(\omega^t, a^t)$.  We leave the specific method for sampling path-action pairs $(\omega^t, a^t)$ to the applicant, as the most efficient approach will depend on the chosen application. A natural, though likely suboptimal, starting point on compact spaces is uniform sampling.

Upon reaching the terminal time step $t=0$, in line with Theorem~\ref{thm_main_result}~(ii),  the optimal action can be computed by combining the local maximizers computed in \eqref{eq:max_step_algo1}, compare \eqref{eq:composition_optimal_action}. \\

If one knows how to effectively select distributions from an ambiguity set $\mathcal{P}_t$, as, e.g., in the parametric case described in Section~\ref{sec_ambiguity_parametric}, then the step where we sample $w^{t+1, (k),(i)} \sim \mathbb{P}_k \in \mathcal{P}_t(\omega^t)$ for $i= 1,\dots,N_{MC}$ 
is straightforward. However, in the often considered situation that the ambiguity set $\mathcal{P}_t$ is defined as a Wasserstein-ball\footnote{While in our numerical experiments we choose $\varepsilon_t(\cdot)$ to be deterministic, we present the algorithm in its most general form.} of size $\varepsilon_t$ centered at a reference measure $\widehat{\PP}_t$ (compare Section~\ref{sec_ambiguity_wasserstein}), this step is not clear, and hence one can refine Algorithm~\ref{algo_1} by applying a well-known duality argument that allows to represent at each time $t$
\begin{equation}\label{eq:wasserstein_dual_numerics}
\begin{aligned}
&\inf_{\PP \in \mathcal{B}_{\varepsilon_t(\omega^t)}^{(q)} \left(\widehat{\PP}_t(\omega^t)\right)} \E_{\PP}\left[\Psi_{t+1}\left(\omega^t \otimes_t \cdot, a^{t+1}\right)\right] \\
&= \sup_{\lambda>0} \E_{\widehat{\PP}_t(\omega^t)}\left[\inf_{z\in \Omloc}\left\{\Psi_{t+1}\left(\omega^t \otimes_t z, a^{t+1}\right)+\lambda \| \cdot - z\|\right\}\right]- \lambda \varepsilon_t(\omega^t)^q,
\end{aligned}
\end{equation}

compare for more details, e.g., \cite{mohajerin2018data}, \cite{gao2023distributionally}, or \cite{bartl2020computational}. This consideration leads to Algorithm~\ref{algo_2} which follows exactly the same idea as Algorithm~\ref{algo_1} but replaces the step involving the sampling of $N_{\mathcal{P}}$ measures by using the above outlined duality leading to a tractable optimization problem involving the optimization over the scalar $\lambda >0$, as well as the minimization $\inf_{z \in \Omloc}$ which can be implemented for compact spaces $\Omloc$ via sampling from a (uniform) grid and then minimizing the corresponding quantities.

\begin{algorithm}[h!]
{
\SetAlgoLined
\SetKwInOut{Input}{Input}
\SetKwInOut{Output}{Output}

\Input{Hyperparameters for the neural networks; Number of  iterations $\operatorname{Iter}_\Psi$  for the improvement of each function $\Psi_t$; Number of iterations  $\operatorname{Iter}_a$ for the improvement of the action function; Number  of Monte-Carlo simulations $N_{\operatorname{MC}}$; State space $\Omloc$; Path-dependent action spaces $(\mathcal{A}_t)_{t=0,\dots,T-1}$ with $\mathcal{A}_t \subseteq \R^{m_t}$ for all $t$; Payoff function $\Psi$; Reference measures $(\widehat{\PP}_t)_{t=0,\dots_{T-1}}$; Pathwise size $\varepsilon_t(\cdot)$ of the Wasserstein-ball}
Set ${\mathcal{NN}_{\Psi,T}} \equiv \Psi$;\\
\For{$t=T-1,\dots,0$}{
Initialize a neural network ${\mathcal{NN}_{\Psi,t}} : (\R^d)^t \times (\R^{m_0} \times \cdots\R^{m_{t-1}})  \rightarrow \R $;\\
Initialize a neural network  ${\mathcal{NN}_{a,t}} : (\R^d)^t \times  (\R^{m_0} \times \cdots\R^{m_{t-1}})  \rightarrow {\R^{m_t}} $;\\

\For{$\operatorname{iteration} =1,\dots, \operatorname{Iter}_a $}{
Sample $(\omega_1,\dots,\omega_t)=\omega^t \in \Omega^t$; \\

Sample $(a_0,\dots,a_{t-1})=a^t \in {\mathcal{A}^t(\omega^t)}$;\\
Sample next states
$\omega^{t+1, (i)} \sim  \widehat{\PP}_t(\omega^t)$ for $i= 1,\dots,N_{MC}$;\\
Sample $z_1,\dots,z_N \in \Omloc$;\\
Maximize
\begin{equation}\label{eq:max_step_algo2_wasserstein}
\begin{aligned}
&\frac{1}{N_{MC}} \sum_{i=1}^{N_{MC}} \left[\min_{j=1,\dots,N} \left\{\Psi_{t+1}\left(\omega^t \otimes_t z_j, (a^{t},{\mathcal{NN}_{a,t}} (\omega^t,a^t))\right)+\lambda \|\omega^{t+1, (i)} -z_j\|\right\}\right] - \lambda \varepsilon_t^q(\omega^t)
\end{aligned}
\end{equation}
 w.r.t.\,the parameters of the neural network $\mathcal{NN}_{a,t}$ and w.r.t.\,the parameter $\lambda>0$;
}
\For{$\operatorname{iteration} =1,\dots,\operatorname{Iter}_\Psi $}{
Sample $(\omega_1,\dots,\omega_t)=\omega^t \in \Omega^t$; \\

Sample $(a_0,\dots,a_{t-1})=a^t \in {\mathcal{A}^t(\omega^t)}$;\\
Sample next states
$\omega^{t+1, (i)} \sim  \widehat{\PP}_t(\omega^t)$ for $i= 1,\dots,N_{MC}$\\
Sample $z_1,\dots,z_N \in \Omloc$;\\
Minimize 
{\small
\begin{equation*}
\left({\mathcal{NN}_{\Psi,t}}(\omega^t, a^t)- \frac{1}{N_{MC}} \sum_{i=1}^{N_{MC}} \left[\min_{j=1,\dots,N} \left\{\Psi_{t+1}\left(\omega^t \otimes_t z_j, (a^{t},{\mathcal{NN}_{a,t}} (\omega^t,a^t))\right)+\lambda \|\omega^{t+1, (i)} -z_j\|\right\}\right] + \lambda \varepsilon_t^q(\omega^t) \right)^2
\end{equation*}
}
 w.r.t.\,the parameters of the neural network  ${\mathcal{NN}_{\Psi,t}}$
}
Define $a_0: = {\mathcal{NN}_{a}^0} $;
\For {$t=1,\dots,T-1$}{
Define 
\begin{equation}\label{eq:composition_optimal_action_wasserstein}
\Omega^t \ni \omega^t =(\omega_1,\dots,\omega_{t}) \mapsto a_t(\omega^t):= {\mathcal{NN}_{a,t}}\left(\omega^t,~\left(a_0,\dots,a_{t-1}(\omega_1,\dots,\omega_{t-1}\right)\right) \in {\mathcal{A}^t(\omega^t)};
\end{equation}
}
}
\Output{Actions $(a_t)_{t=0,\dots,T-1}$;}
 \caption{Training of optimal actions for Wasserstein ambiguity sets.}\label{algo_2}
 }
\end{algorithm}
}
{
Note that Algorithm~\ref{algo_1} can be seen as a finite horizon adaption of the algorithm proposed in \cite[Section 4.4.2]{neufeld2023mdp}. Algorithm~\ref{algo_2} is an adaption of Algorithm~\ref{algo_1} to the specific case where model ambiguity is modeled by Wasserstein-balls, and is similar to the algorithm proposed in \cite[Section 3.1]{bayraktar2022data} where the dynamic programming equation is also solved by a duality argument as in \eqref{eq:wasserstein_dual_numerics}. However, the regression task in \cite{bayraktar2022data} is then solved by Gaussian processes instead of neural networks.
}

{ 
\subsection{Empirical Experiments}\label{sec:experiments}
\subsubsection{Data}\label{sec:data}

To test our approach empirically, we consider a time-series of daily returns of the { stocks of \emph{Google}, \emph{Ebay}, \emph{Amazon}, \emph{Microsoft}, and \emph{Apple}} between beginning of January $2010$ and beginning of February $2020$ to construct data-driven ambiguity sets of probability measures, as described in \eqref{eq_P_hedging} and \eqref{eq_P_hedging_adaptive}, which we use to train our agent. To evaluate the trained strategy, we consider two test periods ranging from beginning of February $2020$ until mid of April $2020$, and between mid of April $2020$ and end of June $2020$, respectively, compare also Figure~\ref{fig_apple_train_test}. { Both periods consist of exactly 50 trading days.}

\begin{figure}[h!]
\begin{center}
\includegraphics[scale=0.5]{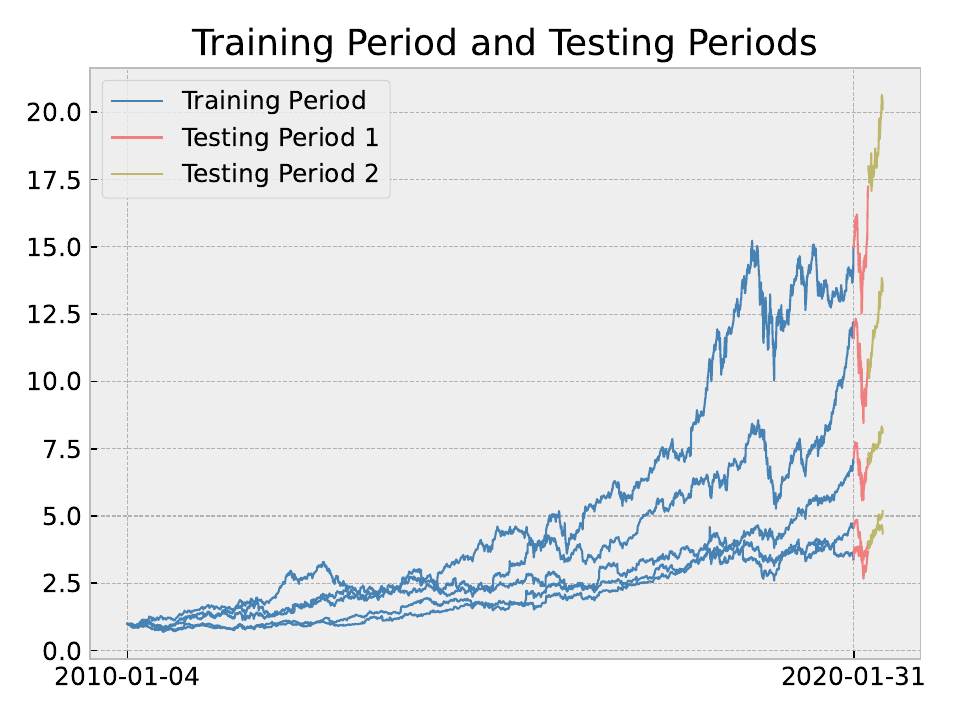}
\end{center}
\caption{The graph depicts the { normalized} evolution of the price of the { stocks of \emph{Google}, \emph{Ebay}, \emph{Amazon}, \emph{Microsoft}, and \emph{Apple}} and the separation of the data into a training period (beginning of January $2010$ until beginning of February $2020$) and the two test periods: beginning of February $2020$ until mid of April $2020$, and  mid of April $2020$ until  end of June $2020$.} \label{fig_apple_train_test}
\end{figure}
\subsubsection{Hedging at-the-money call options}\label{sec:heging1dim}

We set { $d=1$ and $T=10$, which corresponds roughly to an expiration date of two weeks in the future,} and we train { several hedging strategies:
\begin{enumerate}
\item  a non-robust ($\varepsilon=0$) hedging strategy,
\item  a robust hedging strategy with $\varepsilon = 0.0001$ for ambiguity sets defined via \eqref{eq_P_hedging},
\item  a robust hedging strategy with $\varepsilon = 0.0005$ for ambiguity sets defined via \eqref{eq_P_hedging},
\item  a robust adaptive hedging strategy using the approach presented in \cite{bayraktar2023nonparametric}.
\item  a Black-Scholes delta hedging strategy (compare, e.g., \cite{hull1993options}, \cite{hull2017optimal}) where the Black-Scholes volatility is estimated according to historical data.
\end{enumerate}
}
{ The strategies are trained }according to\footnote{To apply Algorithm~\ref{algo_2}, we use the following hyperparameters: 
 Monte-Carlo sample                                    size $N_{\operatorname{MC}}=2^{7}$; number of iterations for $a$: $\operatorname{Iter}_a=500$; number of iterations for $\Psi$: $\operatorname{Iter}_\Psi=2000$, as well as a Batch Size of $128$. The neural networks that approximate $a$ and $\Psi$ constitute of $5$ layers with $32$ neurons each possessing \emph{ReLU} activation functions in each layer, except for the output layers. The learning rate used to optimize the networks $a$ and $\Psi$ when applying the \emph{Adam} optimizer (\cite{kingma2014adam}) is $0.001$. Further details of the implementation can be found under \href{https://github.com/juliansester/Robust-Hedging-Finite-Horizon}{https://github.com/juliansester/Robust-Hedging-Finite-Horizon}.}
 { Algorithm~\ref{algo_2}}  for an at-the-money call option with payoff function 
 $$
{ \Phi(S_0,S_1,\cdots,S_{10})=(S_{10}-S_0)^+,}
 $$
 where we normalize $S_0$ to $1$.

{
To implement the robust adaptive approach from \cite{bayraktar2023nonparametric}, outlined in \eqref{eq_P_hedging_adaptive}, we set $\varepsilon_t = \frac{H^{\alpha}}{\sqrt{N+t}}$ for some constant $H^\alpha\in \R$ which can be computed as the $\alpha$-quantile\footnote{We set $\alpha =0.9$. However any other quantile level could be chosen instead of $90\%$. We decided for comparability to stick to the parameter choice proposed in \cite{bayraktar2023nonparametric}.} of an integral over a Brownian bridge, see \cite[Section 2.1]{bayraktar2023nonparametric} for more details, where it is also outlined that by results from \cite{del1999central}, this choice ensures that 
\begin{equation}
\PP^*(d_{W_1}(\widehat{\PP}^{\rm{ada.}}_t,\PP^*) \leq \varepsilon_t) \geq \alpha
\label{eq:epsilon_adaptive}
\end{equation}
for $\PP^*$ denoting the true but unknown distribution of the realized returns, and where we used for our experiments $\alpha=0.9$ in line with the experiments from \cite{bayraktar2023nonparametric}.
 }

To evaluate the performance of { the different strategies (1)--(5)} we { initiate every day in the respective testing periods a hedging strategy for an at-the-money call option with maturity $T=10$ days. Subsequently, we} evaluate { the realized hedging errors} on testing period 1 and testing period 2 and show the results in Figures~\ref{fig_test_1}, \ref{fig_test_2} and in Table~\ref{tbl:hedging_1dim}. { For a testing period with $50$ trading days, we therefore can observe for each stock the outcomes of $40$ different hedges of at-the-money call options.}

The results show that  during test period 1 (the advent of the Covid-19 pandemic), the { two robust hedging strategies $(\varepsilon=0.0001$ and $\varepsilon=0.0005$)  outperform the non-robust strategy, the classical Black--Scholes delta hedging strategy and also the adaptive strategy. The adaptive strategy on the other hand outperforms the Black-Scholes delta hedge (compare the rightmost column labeled "All Stocks" of Table \eqref{tbl:hedging_1dim}). The robust and adaptive hedging strategies are designed to perform well under the worst-case measure from differently constructed ambiguity sets. We suspect that this explains the relative outperformance of these approaches in comparison to non-robust approaches in adverse financial scenarios for which test period 1 is a prime example. Note that even if the financial scenario does not follow exactly the \emph{worst-case distribution} from the respective ambiguity set, the stability results from Section~\ref{sec_RobustVSNonRobust} ensure that the robust approach performs reasonably well whenever the realized distribution is close enough to the worst-case measure from the ambiguity set of probability measures.}

{ In contrast, in test period 2 the Black--Scholes delta hedging strategy}\footnote{The Black--Scholes delta hedging strategy for a call option with strike $K$ and maturity $t_n$, invests at time $t_i$, the amount $N(d_1)$ in the underlying asset with value $S_{t_i}$, where $N$ denotes the cdf of a standard normal distribution and where $d_1=\frac{1}{\sigma\sqrt{t_n-t_i}} \left[\ln \left(S_{t_i}/K\right)+\frac{1}{2}\sigma^2(t_n-t_i)\right]$. Note that we set the interest rate to be $0$. To apply the strategy, we estimate the annual volatility $\sigma$ from historical data. 
{ In our case we use for Google: $\sigma_{\rm{Google}} \approx 0.24$, for Ebay: $\sigma_{\rm{Ebay}} \approx 0.31$, for Amazon: $\sigma_{\rm{Amazon}} \approx 0.31$, for Microsoft: $\sigma_{\rm{Microsoft}} \approx 0.23$, for Apple: $\sigma_{\rm{Apple}} \approx 0.25$.}} is the best performing strategy. This accounts for the fact that in turbulent times, where the underlying empirical distributions of the asset returns might change dramatically compared to the training period, it turns out to be favorable to take model uncertainty into account as both the non-robust model and the Black--Scholes model rely on misspecified probability distributions. However, in \emph{good weather} periods (as in test period 2), { where the normal distribution underlying the Black-Scholes model is almost not misspecified,} it is hard to beat a strategy such as the Black--Scholes delta hedging strategy. { The worst-case approach that underpins our presented robust hedging approach using ambiguity sets of probability measures is, by the stability results from Section~\ref{sec_RobustVSNonRobust}, designed to perform well if the worst case measure is close to the realized distribution. If this is however not the case there is no performance guarantee for the robust approach leading to suboptimal hedging behavior. This becomes apparent in test period 2. We leave the improvement of robust approaches in such not-misspecified periods for future research.} 

\begin{figure}[h!]
\begin{center}
\includegraphics[scale=0.35]{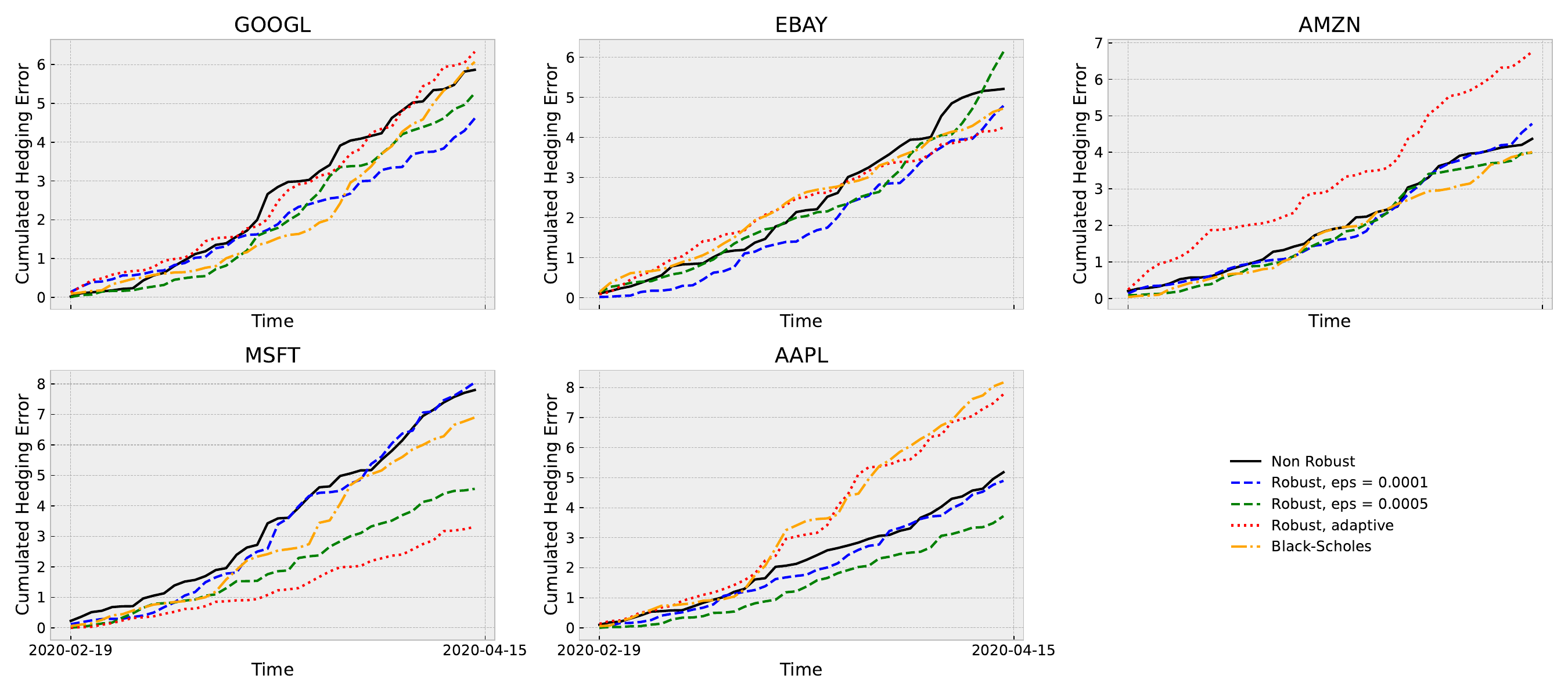}
\includegraphics[scale=0.35]{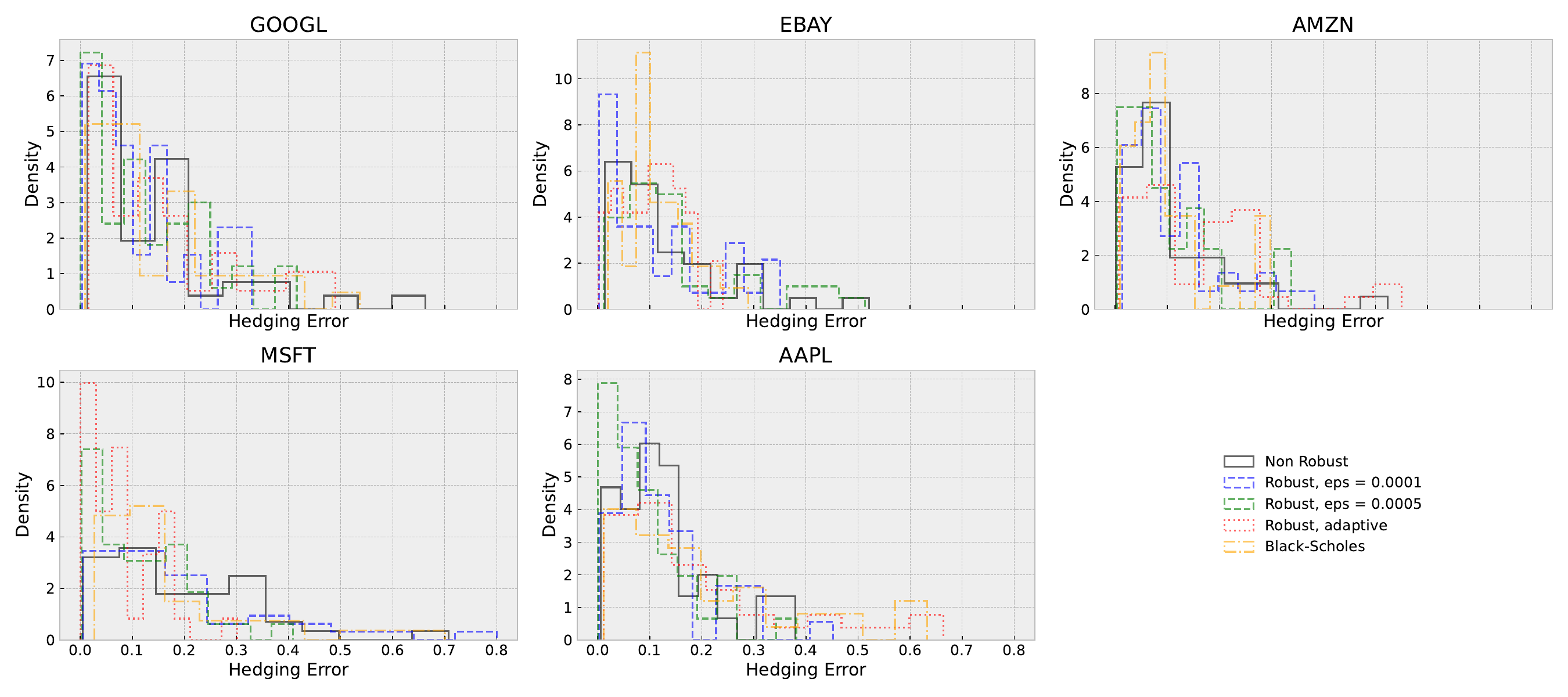}
\end{center}
\caption{{Cumulated hedging errors and error distributions for different strategies during Test Period 1. The top panels show the cumulative hedging errors over time, while the bottom panels display the corresponding histograms of hedging error {distributions obtained over all $40$ hedges.}} } \label{fig_test_1}
\end{figure}

\begin{figure}[h!]
\begin{center}
\includegraphics[scale=0.35]{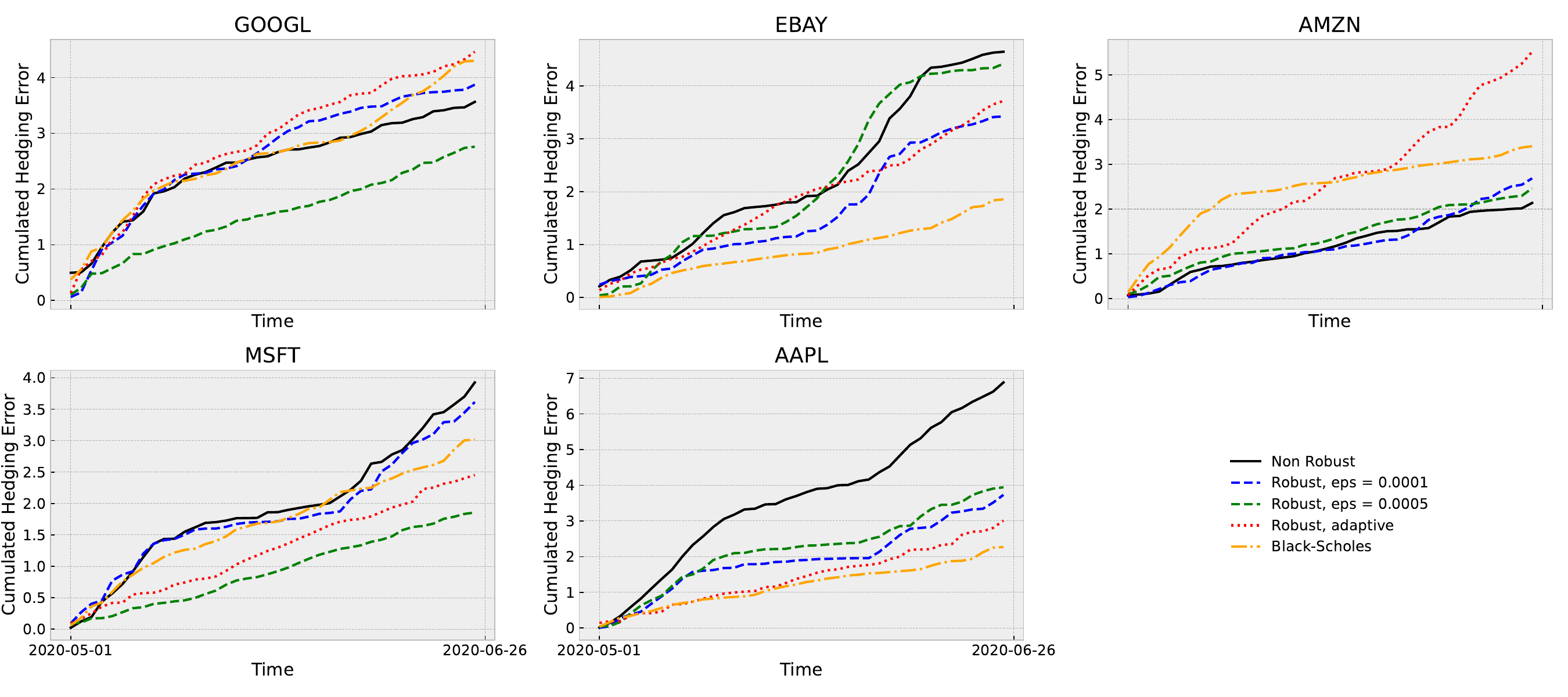}
\includegraphics[scale=0.35]{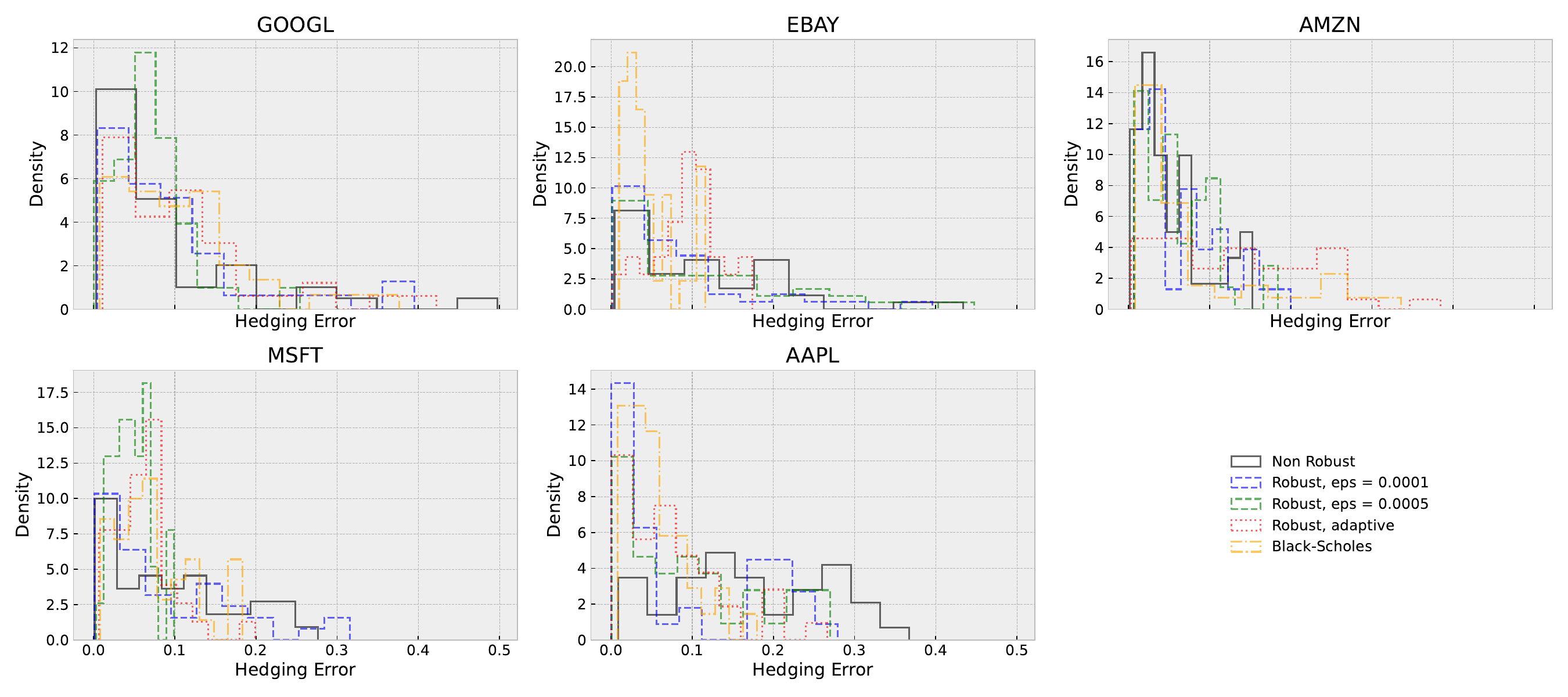}
\end{center}
\caption{{ Cumulated hedging errors and error distributions for different strategies during Test Period 2. The top panels show the cumulative hedging errors over time, while the bottom panels display the corresponding histograms of hedging error {distributions obtained over all $40$ hedges.}}} \label{fig_test_2}
\end{figure}

\begin{table}[htbp]
\centering
\caption{{Descriptive statistics of hedging errors across different strategies and stocks in Test Periods 1 and 2. \emph{count} denotes the number of samples, \emph{mean} the empirical mean, \emph{std dev} the empirical standard deviation,  \emph{min} the minimum, \emph{max} the maximum, {and \emph{25 \%, 50 \%, 75\%} stand for the corresponding quantile levels.}}}\label{tbl:hedging_1dim}
\begin{adjustbox}{width=\textwidth}
\begin{tabular}{llcccccc|cccccc}
\toprule
\multicolumn{2}{c}{} & \multicolumn{6}{c|}{\textbf{Test Period 1}} & \multicolumn{6}{c}{\textbf{Test Period 2}} \\
\cmidrule(lr){3-8} \cmidrule(lr){9-14}
\textbf{Strategy} & \textbf{Statistic} & AAPL & AMZN & EBAY & GOOGL & MSFT & \textbf{All Stocks} & AAPL & AMZN & EBAY & GOOGL & MSFT & \textbf{All Stocks} \\
\midrule
\multirow{7}{*}{Non-Robust}
&count & 40 & 40 & 40 & 40 & 40 & 200 & 40 & 40 & 40 & 40 & 40 &200 \\
& mean & 0.129 & 0.109 & 0.130 & 0.147 & 0.195 & 0.142 & 0.172 & 0.053 & 0.116 & 0.089 & 0.098 &0.105\\
& std & 0.093 & 0.098 & 0.114 & 0.139 & 0.149 & 0.122 & 0.097 & 0.042 & 0.099 & 0.102 & 0.078 &0.094\\
& min & 0.006 & 0.001 & 0.014 & 0.014 & 0.005 & 0.001 & 0.009 & 0.002 & 0.004 & 0.003 & 0.001 &0.001\\
& 25\% & 0.076 & 0.046 & 0.050 & 0.040 & 0.086 & 0.054 & 0.108 & 0.022 & 0.037 & 0.031 & 0.028 &0.028\\
& 50\% & 0.105 & 0.082 & 0.095 & 0.130 & 0.157 & 0.102 & 0.162 & 0.039 & 0.104 & 0.053 & 0.082 &0.076\\
& 75\% & 0.156 & 0.121 & 0.167 & 0.188 & 0.301 & 0.194 & 0.259 & 0.069 & 0.179 & 0.100 & 0.143 &0.153\\
& max & 0.380 & 0.523 & 0.521 & 0.663 & 0.707 & 0.707 & 0.368 & 0.152 & 0.434 & 0.497 & 0.276 &0.497\\
\midrule
\multirow{7}{*}{Robust, eps = 0.0001}
&count & 40 & 40 & 40 & 40 & 40 & 200 & 40 & 40 & 40 & 40 & 40 &200 \\
& mean & 0.122 & 0.120 & 0.120 & 0.115 & 0.201 & 0.135 & 0.093 & 0.067 & 0.086 & 0.097 & 0.090 &0.086\\
& std & 0.094 & 0.090 & 0.107 & 0.096 & 0.177 & 0.121 & 0.093 & 0.049 & 0.089 & 0.094 & 0.084 &0.084\\
& min & 0.003 & 0.014 & 0.002 & 0.004 & 0.005 & 0.002 & 0.000 & 0.007 & 0.001 & 0.004 & 0.001 & 0.001\\
& 25\% & 0.059 & 0.060 & 0.030 & 0.047 & 0.062 & 0.048 & 0.014 & 0.032 & 0.024 & 0.029 & 0.024 & 0.024\\
& 50\% & 0.096 & 0.087 & 0.089 & 0.083 & 0.152 & 0.098 & 0.045 & 0.051 & 0.053 & 0.073 & 0.057 &0.057\\
& 75\% & 0.158 & 0.145 & 0.180 & 0.151 & 0.261 & 0.181 & 0.187 & 0.094 & 0.108 & 0.121 & 0.147 &0.121\\
& max & 0.452 & 0.383 & 0.351 & 0.329 & 0.800 & 0.799 & 0.279 & 0.200 & 0.396 & 0.395 & 0.315 &0.396\\
\midrule
\multirow{7}{*}{Robust, eps = 0.0005}
&count & 40 & 40 & 40 & 40 & 40 & 200 & 40 & 40 & 40 & 40 & 40 &200 \\
& mean & 0.093 & 0.100 & 0.153 & 0.132 & 0.114 & 0.118 & 0.099 & 0.062 & 0.110 & 0.069 & 0.046 &0.077\\
& std & 0.083 & 0.086 & 0.130 & 0.110 & 0.093 & 0.103 & 0.084 & 0.043 & 0.109 & 0.047 & 0.023 &0.071\\
& min & 0.001 & 0.004 & 0.011 & 0.001 & 0.003 & 0.001 & 0.001 & 0.007 & 0.001 & 0.000 & 0.003 &0.000\\
& 25\% & 0.032 & 0.042 & 0.068 & 0.040 & 0.032 & 0.041 & 0.024 & 0.025 & 0.025 & 0.033 & 0.029 &0.027\\
& 50\% & 0.062 & 0.071 & 0.113 & 0.097 & 0.095 & 0.091 & 0.082 & 0.053 & 0.068 & 0.063 & 0.047 &0.056\\
& 75\% & 0.135 & 0.152 & 0.197 & 0.206 & 0.175 & 0.164 & 0.153 & 0.090 & 0.175 & 0.088 & 0.062 &0.099\\
& max & 0.381 & 0.338 & 0.513 & 0.416 & 0.408 & 0.513 & 0.270 & 0.184 & 0.448 & 0.254 & 0.099 &0.448\\
\midrule
\multirow{7}{*}{Robust adaptive}
&count & 40 & 40 & 40 & 40 & 40 & 200 & 40 & 40 & 40 & 40 & 40 &200 \\
& mean & 0.194 & 0.169 & 0.106 & 0.158 & 0.083 & 0.142 & 0.075 & 0.138 & 0.093 & 0.112 & 0.061 &0.095\\
& std & 0.170 & 0.131 & 0.061 & 0.131 & 0.067 & 0.125 & 0.063 & 0.093 & 0.041 & 0.093 & 0.036 &0.074\\
& min & 0.012 & 0.007 & 0.002 & 0.016 & 0.001 & 0.001 & 0.000 & 0.003 & 0.001 & 0.011 & 0.007 &0.000\\
& 25\% & 0.084 & 0.065 & 0.059 & 0.058 & 0.030 & 0.055 & 0.024 & 0.072 & 0.071 & 0.044 & 0.039 &0.042\\
& 50\% & 0.136 & 0.164 & 0.102 & 0.127 & 0.072 & 0.104 & 0.069 & 0.135 & 0.096 & 0.093 & 0.059 &0.077\\
& 75\% & 0.243 & 0.245 & 0.149 & 0.200 & 0.133 & 0.178 & 0.107 & 0.216 & 0.117 & 0.134 & 0.075 &0.125\\
& max & 0.664 & 0.550 & 0.240 & 0.490 & 0.302 & 0.663 & 0.266 & 0.385 & 0.174 & 0.422 & 0.199 &0.422\\
\midrule
\multirow{7}{*}{Black-Scholes}
&count & 40 & 40 & 40 & 40 & 40 & 200 & 40 & 40 & 40 & 40 & 40 &200 \\
& mean & 0.204 & 0.100 & 0.118 & 0.152 & 0.173 & 0.149 & 0.057 & 0.085 & 0.046 & 0.108 & 0.075 &0.074\\
& std & 0.172 & 0.080 & 0.063 & 0.130 & 0.159 & 0.132 & 0.038 & 0.090 & 0.032 & 0.085 & 0.048 &0.066\\
& min & 0.012 & 0.009 & 0.020 & 0.009 & 0.027 & 0.009 & 0.008 & 0.008 & 0.010 & 0.007 & 0.008 &0.007\\
& 25\% & 0.075 & 0.046 & 0.086 & 0.053 & 0.070 & 0.057 & 0.026 & 0.028 & 0.025 & 0.045 & 0.041 &0.027\\
& 50\% & 0.155 & 0.083 & 0.101 & 0.101 & 0.118 & 0.107 & 0.050 & 0.044 & 0.034 & 0.086 & 0.064 &0.051\\
& 75\% & 0.284 & 0.124 & 0.150 & 0.202 & 0.196 & 0.189 & 0.076 & 0.107 & 0.064 & 0.138 & 0.107 &0.102\\
& max & 0.633 & 0.298 & 0.289 & 0.537 & 0.700 & 0.700 & 0.180 & 0.336 & 0.116 & 0.377 & 0.183 &0.377\\
\midrule
\bottomrule
\end{tabular}

\end{adjustbox}
\end{table}

{
\subsubsection{Hedging at-the-money basket options}\label{sec:heging5dim}
Next, we aim at hedging an at-the-money basket option depending on the outcome of the $d=5$ assets (\emph{Google}, \emph{Ebay}, \emph{Amazon}, \emph{Microsoft}, and \emph{Apple}), presented in Section~\ref{sec:data}. This means the payoff function is given by 
$$
 \Phi(S_0,S_1,\cdots,S_{10})=\frac{1}{d}\left(\sum_{i=1}^d S_{10}^i-S_0^i\right)^+.
$$
As in the one-dimensional case, we compare our results with a robust adaptive approach, which was in the multi-dimensional case introduced in \cite{bayraktar2022data}. In the multi-dimensional case Equation \eqref{eq:epsilon_adaptive} is no more applicable with $\varepsilon_t = \frac{H^{\alpha}}{\sqrt{N+t}}$. Instead we choose at all times $t$
\begin{align*}
\widetilde{\varepsilon}_t:= \frac{64}{2.7} \bigg[\gamma^*_t+(N+t)^{-1/2} \bigg( &(C\tfrac{\sqrt{d}}{2}-\gamma^*_t)+\log\left(C \tfrac{\sqrt{d}}{2\gamma^*_t}\right) 2C \sqrt{d} \lceil d/2 \rceil \\
&+  \sum_{k=2}^{\lceil d/2 \rceil} {{\lceil d/2 \rceil} \choose k } (2C \sqrt{d})^k \bigg(\tfrac{\left(C\tfrac{\sqrt{d}}{2}\right)^{1-k}-{\gamma^*_t}^{1-k}}{1-k}\bigg)\bigg)\bigg]
\end{align*}
with $\gamma^*_t = \frac{2C \sqrt{d}}{(N+t)^{1/(2 \lceil d/2 \rceil)}-1}$ and $C = \max_{s=1,\dots,N \atop i = 1,\dots,5} |\mathscr{R}_s^i|$, 
which by Lemma~\ref{lem:epsilon_multidim} ensures, analogue to \eqref{eq:epsilon_adaptive}, that
\begin{equation*}
\PP^*(d_{W_1}(\widehat{\PP}^{\rm{ada.}}_t,\PP^*) \leq \widetilde{\varepsilon}_t) \geq 0.9.
\end{equation*}
for $\PP^*$ denoting the true but unknown distribution of the realized asset returns.

We depict the results of the trained hedging strategies in Figures~\ref{fig_basket_1}, \ref{fig_basket_2} and in Table~\ref{tbl_basket}.
\begin{figure}[h!]
\begin{center}
\includegraphics[scale=0.35]{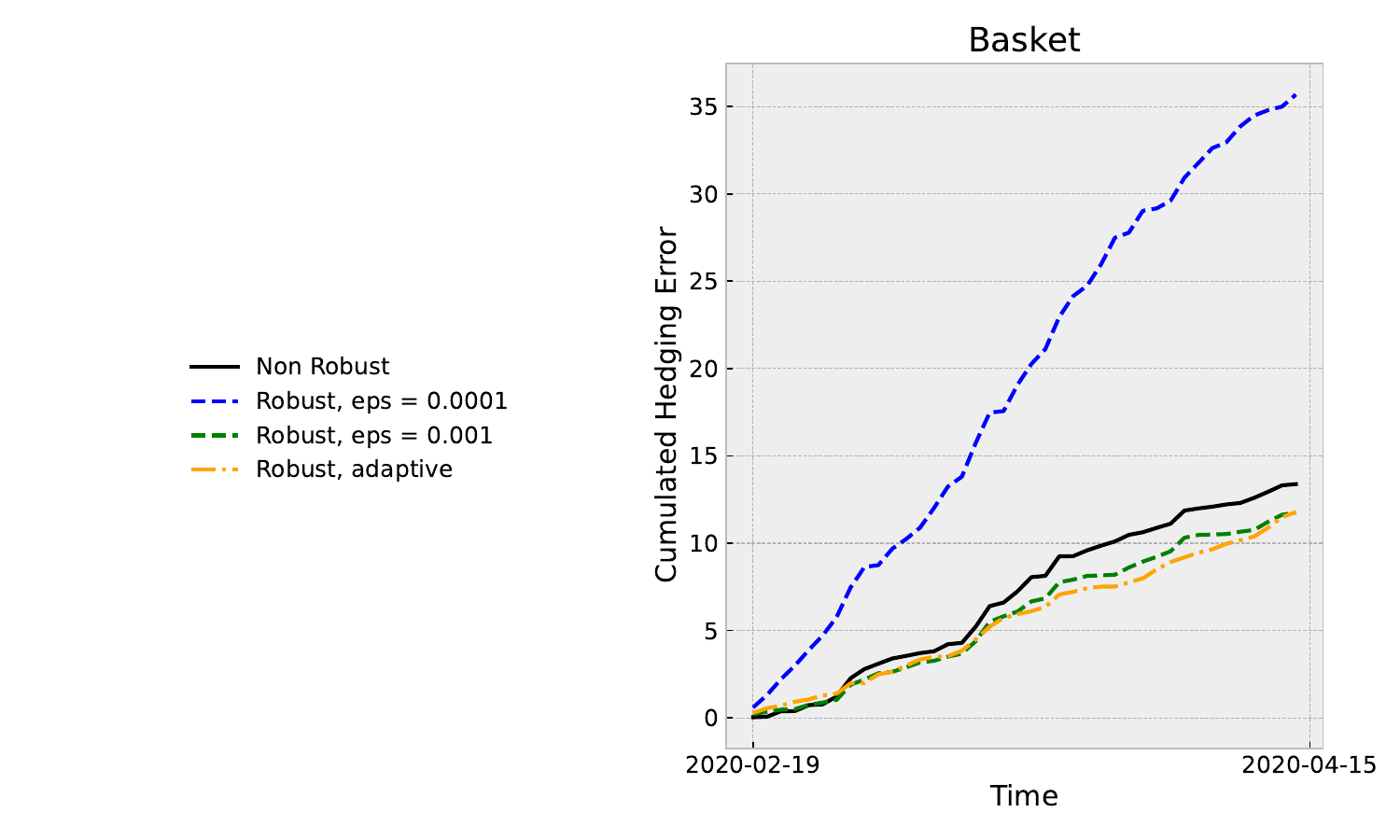}
\includegraphics[scale=0.35]{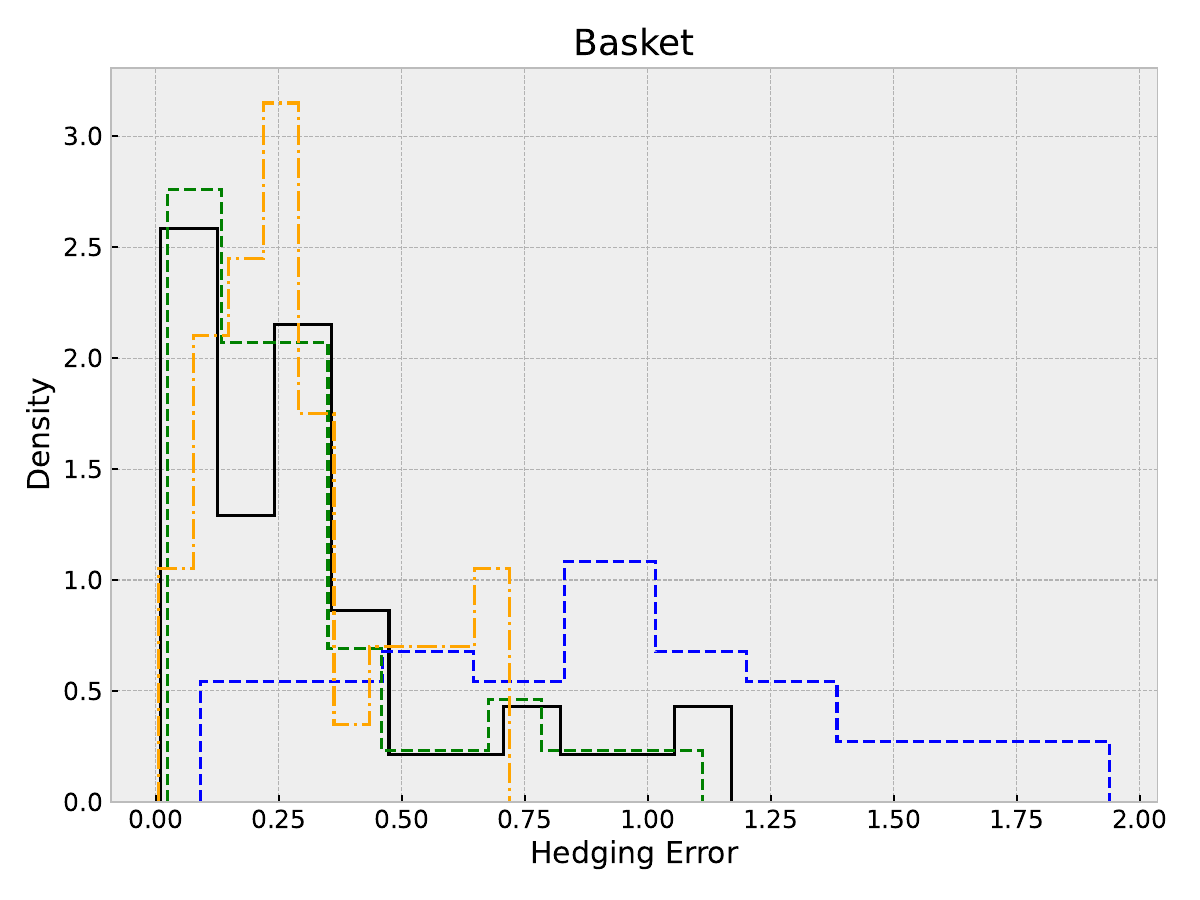}
\end{center}
\caption{{  Cumulated hedging errors and error distributions for different strategies when hedging a basket option during Test Period 1. The left panel shows the cumulative hedging errors over time, while the right panel displays the corresponding histograms {of hedging error distributions obtained over all $40$ hedges.}}} \label{fig_basket_1}
\end{figure}

\begin{figure}[h!]
\begin{center}
\includegraphics[scale=0.35]{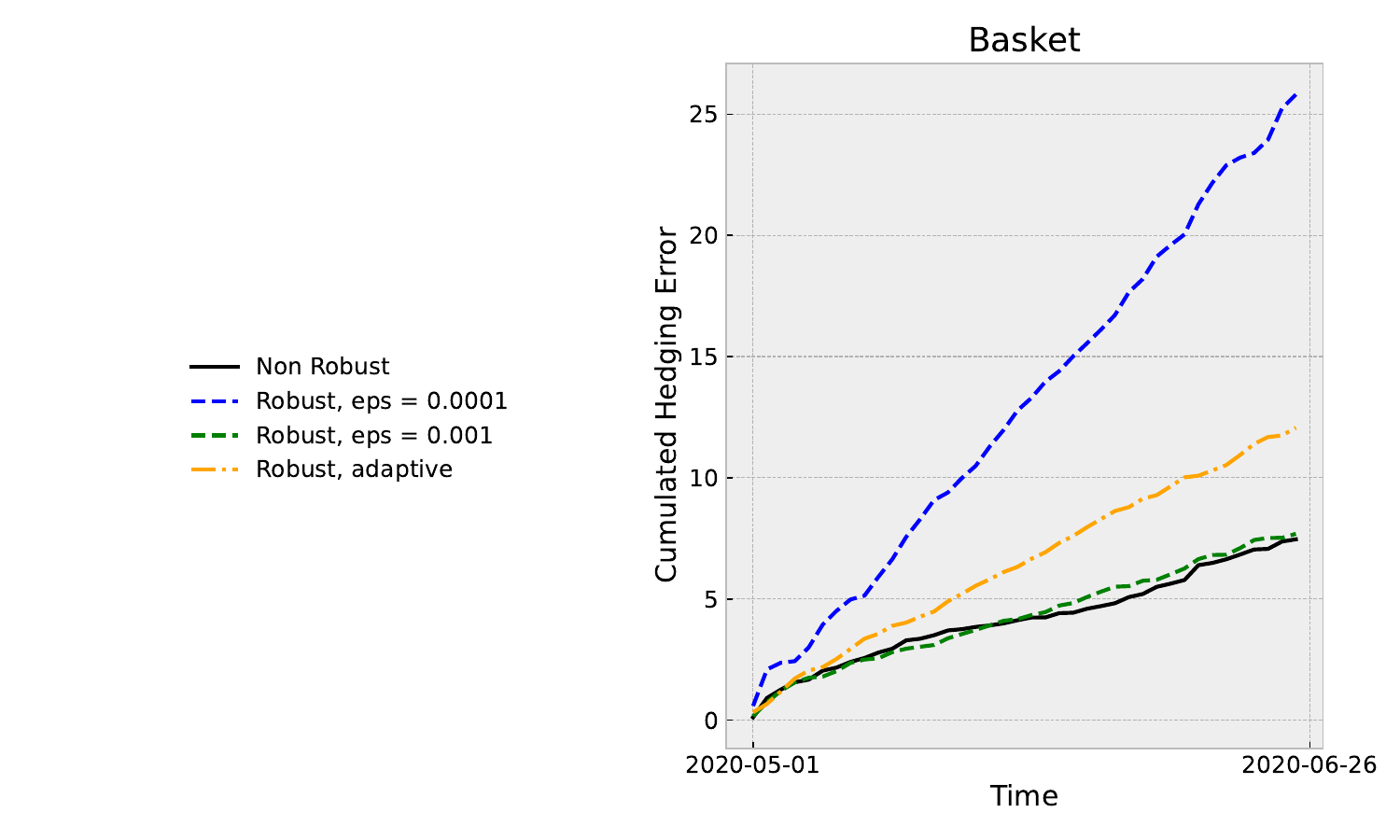}
\includegraphics[scale=0.35]{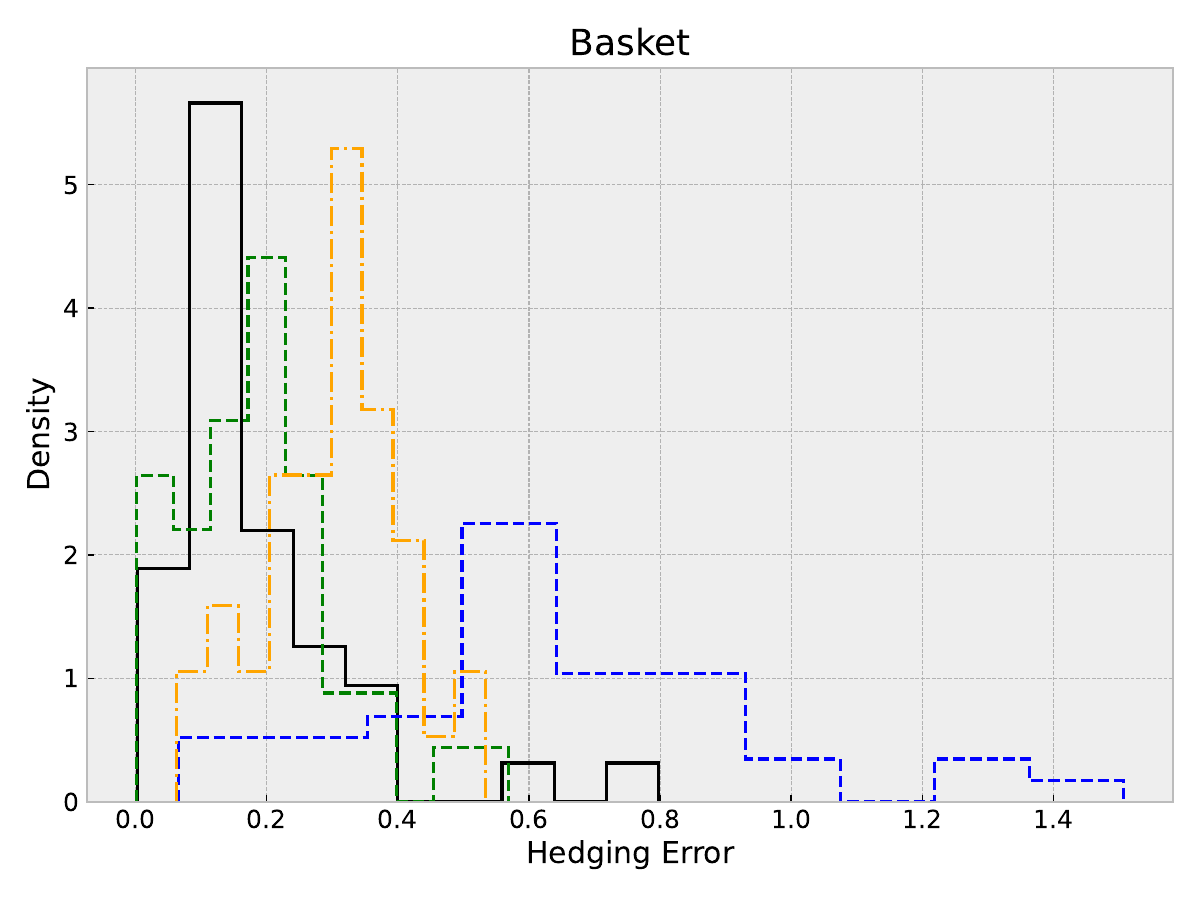}
\end{center}
\caption{{  Cumulated hedging errors and error distributions for different strategies when hedging a basket option during Test Period 2. The left panel shows the cumulative hedging errors over time, while the right panel displays the corresponding histograms {of hedging error distributions obtained over all $40$ hedges.}}} \label{fig_basket_2}
\end{figure}

\begin{table}[ht]
\centering
\caption{ Summary Statistics of Hedging Errors of a Basket option for Test Periods 1 and 2. \emph{count} denotes the number of samples, \emph{mean} the empirical mean, \emph{std dev} the empirical standard deviation,  \emph{min} the minimum, \emph{max} the maximum, and \emph{25 \%, 50 \%, { 75\%}} {stand for the corresponding quantile levels.}}
\label{tbl_basket}
\begin{adjustbox}{width=0.4\textwidth}
\begin{tabular}{llcc}
\toprule
Strategy & Statistic & Test Period 1 & Test Period 2 \\
\midrule
\multirow{8}{*}{Non-Robust} 
& Count     & 40 & 40 \\
& Mean      & 0.335 & 0.186 \\
& Std Dev   & 0.313 & 0.152 \\
& Min       & 0.009 & 0.002 \\
& 25\%      & 0.102 & 0.105 \\
& Median    & 0.258 & 0.148 \\
& 75\%      & 0.382 & 0.227 \\
& Max       & 1.170 & 0.797 \\
\midrule
\multirow{8}{*}{Robust ($\varepsilon = 0.0001$)} 
& Count     & 40 & 40 \\
& Mean      & 0.892 & 0.645 \\
& Std Dev   & 0.485 & 0.294 \\
& Min       & 0.090 & 0.066 \\
& 25\%      & 0.582 & 0.502 \\
& Median    & 0.849 & 0.589 \\
& 75\%      & 1.202 & 0.791 \\
& Max       & 1.939 & 1.507 \\
\midrule
\multirow{8}{*}{Robust ($\varepsilon = 0.001$)} 
& Count     & 40 & 40 \\
& Mean      & {0.294} & {0.192} \\
& Std Dev   & {0.261} & {0.124} \\
& Min       & 0.024 & 0.001 \\
& 25\%      & 0.128 & 0.100 \\
& Median    & 0.226 & 0.180 \\
& 75\%      & 0.338 & 0.250 \\
& Max       & 1.110 & 0.568 \\
\midrule
\multirow{8}{*}{Robust (adaptive)} 
& Count     & 40 & 40 \\
& Mean      & 0.295 & 0.302 \\
& Std Dev   & 0.187 & 0.109 \\
& Min       & 0.005 & 0.062 \\
& 25\%      & 0.164 & 0.221 \\
& Median    & 0.247 & 0.326 \\
& 75\%      & 0.373 & 0.362 \\
& Max       & 0.719 & 0.534 \\
\bottomrule
\end{tabular}
\end{adjustbox}
\end{table}
\noindent Overall, they confirm the results of the one-dimensional setting and show clearly that including uncertainty can improve the hedging performance significantly, in particular, during test period 1, where both the adaptive approach and the robust approach with $\varepsilon=0.001$ perform similar. However, in test period 2 the non-robust approach is the best-performing approach slightly outperforming the robust approach with $\varepsilon=0.001$ and clearly outperforming the robust adaptive approach, due to less model misspecification, analogue to the discussion in the one-dimensional case.
}

\section{Proofs} \label{sec_proofs}

\subsection{Proofs of Section~\ref{sec_dynamic_programming}}
Before presenting the proof of Theorem~\ref{thm_main_result},  we first establish the following crucial lemma which proves the existence of local (i.e.\ one-step) worst-case measures and controls as well as that the regularity of $\Psi$ is back-propagated in the backward recursion of the dynamic programming procedure \eqref{eq_defn_J_t}--\eqref{eq_defn_Psi_0}.
{
\begin{lem}\label{lem_appendix_backwards_iteration}
	Let Assumption~\ref{asu_A} and Assumption~\ref{asu_P} hold, and let $p \in \N_0$ be the integer from Assumption~\ref{asu_psi}.
	 Moreover,
let $t\in \{1,\dots,T-1\}$ and let  $\Psi_{t+1}:\Omega^{t+1} \times \cA^{t+1} \to \R$.
Assume that there exists some $C_{\Psi,t+1}\geq 1$, $\alpha_{t+1}\in (0,1]$, and $L_{\Psi,t+1}\geq 0$ such that for all\footnote{For any $s,t \in \{1,\dots T\}$ with $s-1\leq t$, we use the notation $(\omega^{t},a^s):= \big((\omega_1,\dots,\omega_{t}),(a_0,\dots,a_{s-1})\big)$ for any element $(\omega^{t},a^s)\in \Omega^t\times \cA^s$.}
$(\omega^{t+1},a^{t+1}),
(\widetilde{\omega}^{t+1},\widetilde{a}^{t+1})
\in \Omega^{t+1}\times \mathcal{A}^{t+1}$ 
 we have
\begin{equation}\label{eq_linear_growth_thm_1}
\left|\Psi_{t+1}(\omega^{t+1},a^{t+1})\right| \leq C_{\Psi,t+1} \cdot \left(1+\sum_{i=1}^{t+1} \| \omega_i\|^p\right)
\end{equation}
as well as
\begin{equation}\label{eq_Lipschitz_1_thm_1}
\left|\Psi_{t+1}\left(\omega^{t+1},a^{t+1}\right)-\Psi_{t+1}\left(\widetilde{\omega}^{t+1},\widetilde{a}^{t+1}\right)\right|
 \leq
  L_{\Psi,t+1} \cdot \left(  \sum_{i=1}^{t+1}\left\|\omega_i-\widetilde{\omega}_i\right\|^{\alpha_{t+1}}+\|a_{i-1}-\widetilde{a}_{i-1}\|^{\alpha_{t+1}}\right).
\end{equation}
Then, the following holds.
\begin{itemize}
\item[(i)] There exists a measurable map $\Omega^t \times \cA^{t+1} \ni (\omega^t, a^{t+1}) \mapsto \widetilde{\PP}_t^*(\omega^t,a^{t+1})\in \mathcal{M}_1(\Omloc)$  satisfying for all $(\omega^t,a^{t+1}) \in \Omega^t \times \cA^{t+1}$ that  $\widetilde{\PP}_t^*(\omega^t,a^{t+1}) \in \mathcal{P}_t(\omega^t)$ and 
\[
\E_{\widetilde{\PP}_t^*(\omega^t,a^{t+1})} \left[\Psi_{t+1} \left(\omega^t \otimes_t \cdot, a^{t+1}\right)\right]=\inf_{\PP \in \mathcal{P}_t(\omega^t)} \E_{\PP} \left[\Psi_{t+1} \left(\omega^t \otimes_t \cdot, a^{t+1}\right)\right].
\]
\item[(ii)] There exists a measurable map
$
\Omega^t \times \cA^t \ni (\omega^t,a^t) \mapsto \widetilde{a}_t^* \left(\omega^t, a^t\right) \in \cA_t(\omega^t)
$
satisfying for all $(\omega^t,a^{t}) \in \Omega^t \times \cA^{t}$ that
\begin{equation*}
	\begin{split}
\Psi_t(\omega^t, a^t):=&\sup_{\widetilde{a} \in \cA_t(\omega^t)} \inf_{\PP \in \mathcal{P}_t(\omega^t)} \E_{\PP} \left[\Psi_{t+1} \left(\omega^t \otimes_t \cdot, (a^{t},\widetilde{a})\right)\right]\\
=&\inf_{\PP \in \mathcal{P}_t(\omega^t)} \E_{\PP} \bigg[\Psi_{t+1} \bigg(\omega^t \otimes_t \cdot, ~\left(a^{t},\widetilde{a}_t^* \left(\omega^t, a^t\right)\right)\bigg)\bigg].
\end{split}
\end{equation*}
\item[(iii)]
There exists some 
$C_{\Psi,t}\geq 1$, $\alpha_{t}\in (0,1]$, and $L_{\Psi,t}\geq 0$ such that for all
$(\omega^{t},a^t),(\widetilde{\omega}^{t},\widetilde{a}^{t}) \in \Omega^{t}\times \cA^t$
the following inequalities hold
\begin{align}
\label{eq_linear_growth_thm_2}
&\left|\Psi_{t}(\omega^{t},a^{t})\right| \leq C_{\Psi,t} \cdot \left(1+\sum_{i=1}^{t} \| \omega_i\|^p\right) , \\
\label{eq_Lipschitz_2_thm_1}
&\left|\Psi_{t}\left(\omega^{t},a^{t}\right)-\Psi_t\left(\widetilde{\omega}^{t},\widetilde{a}^{t}\right)\right| 
\leq 
L_{\Psi,t} \cdot \left(  \sum_{i=1}^t\left\|\omega_i-\widetilde{\omega}_i\right\|^{\alpha_{t}}+\|a_{i-1}-\widetilde{a}_{i-1}\|^{\alpha_{t}}\right).
\end{align}
\end{itemize}

\end{lem}
\begin{proof}[Proof of Lemma~\ref{lem_appendix_backwards_iteration}]
  Let $t \in \{1,\dots,T-1\}$.
%
We consider the map
\begin{align*}
F: \left\{(\omega^t,a^t,a,\PP)~\middle|~ (\omega^t,a^t)\in  \Omega^t\times \cA^t,  a \in \cA_t(\omega^t), \PP \in \mathcal{P}_t(\omega^t)\right\} &\rightarrow \R\\
(\omega^t,a^t,a,\PP)&\mapsto \E_{\PP} \left[\Psi_{t+1}\left(\omega^t \otimes_t \cdot, (a^t,a) \right) \right].
\end{align*}
We aim at applying Berge's maximum theorem (see, e.g., \cite{berge} or \cite[Theorem 18.19]{Aliprantis}) to $F$, and therefore first want to show that $F$ is continuous. To that end,
we consider a sequence $(\omega^t_n,a^t_n,a_n,\PP_n)_{n \in \N} \subseteq \left\{(\omega^t,a^t,a,\PP)~\middle|~ (\omega^t,a^t)\in  \Omega^t\times \cA^t,  a \in \cA_t(\omega^t), \PP \in \mathcal{P}_t(\omega^t)\right\}$ with $(\omega^t_n,a^t_n,a_n,\PP_n) \rightarrow (\omega^t,a^t,a,\PP)$ as $n \rightarrow \infty$ for some $(\omega^t,a^t)\in  \Omega^t\times \cA^t$,  $a \in \cA_t(\omega^t)$, $\PP \in \mathcal{P}_t(\omega^t)$. Then, we have
\begin{align*}
&\left|F\left(\omega^t_n,a^t_n,a_n,\PP_n\right)-F\left(\omega^t,a^t,a,\PP\right)\right|\\
&\leq \left|F\left(\omega^t_n,a^t_n,a_n,\PP_n\right)-F\left(\omega^t,a^t,a,\PP_n\right)\right|+ \left|F\left(\omega^t,a^t,a,\PP_n\right)-F\left(\omega^t,a^t,a,\PP\right)\right|.
\end{align*}
Note that with $\PP_n \rightarrow \PP$ in $\tau_p$ we obtain that $\lim_{n \rightarrow \infty} \left|F\left(\omega^t,a^t,a,\PP_n\right)-F\left(\omega^t,a^t,a,\PP\right)\right| = 0$ as $\Psi_{t+1}$ is by assumptions \eqref{eq_linear_growth_thm_1} and \eqref{eq_Lipschitz_1_thm_1} continuous and of polynomial growth of order $p$. Further, we use \eqref{eq_Lipschitz_1_thm_1} and compute\footnote{We use here the notation $\omega^t_n=\big((\omega_1)_n,\dots,(\omega_t)_n\big)$ and $a^t_n=\big((a_0)_n,\dots,(a_{t-1})_n\big)$.}
\begin{align*}
&\lim_{n \rightarrow \infty} \left|F\left(\omega^t_n,a^t_n,a_n,\PP_n\right)-F\left(\omega^t,a^t,a,\PP_n\right)\right|\\
&\leq \lim_{n \rightarrow \infty} \int_{\Omloc} \left| \Psi_{t+1}\left((\omega^t_n,\omega),(a^t_n,a_n)\right)- \Psi_{t+1}\left((\omega^t,\omega),(a^t,a)\right)\right| \PP_n(\D \omega)\\
&\leq \lim_{n \rightarrow \infty} \int_{\Omloc}  L_{\Psi,t+1}  \cdot \left( \sum_{i=1}^t \bigg[\|(\omega_i)_n-\omega_i\|^{\alpha_{t+1}}+\|(a_{i-1})_{n}-a_{i-1}\|^{\alpha_{t+1}}\bigg]+\|a_n-a\|^{\alpha_{t+1}}\right)\PP_n(\D \omega)\\
&=L_{\Psi,t+1}  \int_{\Omloc}  1 \cdot \PP(\D \omega)\cdot \lim_{n \rightarrow \infty} \left( \sum_{i=1}^t \bigg[\|(\omega_i)_n-\omega_i\|^{\alpha_{t+1}}+\|(a_{i-1})_{n}-a_{i-1}\|^{\alpha_{t+1}}\bigg]+\|a_n-a\|^{\alpha_{t+1}}\right)= 0.
\end{align*}
We have thus shown that $F$ is continuous. Therefore, by using Assumption~\ref{asu_P}~(i), we may now apply Berge's maximum theorem which yields that
\begin{equation}\label{eq_Berge_argument_PP}
\left\{(\omega^t,a^t,a)~\middle|~ (\omega^t,a^t)\in  \Omega^t\times \cA^t,  a \in \cA_t(\omega^t)\right\}
\ni (\omega^t,a^t,a) \mapsto \inf_{\PP \in \mathcal{P}_t(\omega^t)} \E_{\PP} \left[\Psi_{t+1} \left(\omega^t \otimes_t \cdot, (a^t,a)\right)\right]
\end{equation}
is continuous. Note that the above application of Berge's maximum theorem also implies the existence of a minimizer of \eqref{eq_Berge_argument_PP}. With the measurable maximum theorem (see, e.g., \cite[Theorem 18.19]{Aliprantis}) and the notation \eqref{notation_graph}, we therefore obtain a measurable map  $\Omega^t \times \cA^{t+1} \ni (\omega^t, a^{t+1}) \mapsto \widetilde{\PP}_t^*(\omega^t,a^{t+1})\in \mathcal{P}_t(\omega^t)$ satisfying for all $(\omega^t,a^{t+1}) \in \Omega^t \times \cA^{t+1}$ that 
\[
\E_{\widetilde{\PP}_t^*(\omega^t,a^{t+1})} \left[\Psi_{t+1} \left(\omega^t \otimes_t \cdot, a^{t+1}\right)\right]=\inf_{\PP \in \mathcal{P}_t(\omega^t)} \E_{\PP} \left[\Psi_{t+1} \left(\omega^t \otimes_t \cdot, a^{t+1}\right)\right],
\]
showing (i).\\

Next, using the continuity of the map \eqref{eq_Berge_argument_PP} together with Assumption~\ref{asu_A}~(i) allows us again to apply 
Berge's maximum theorem to obtain that
\begin{equation} \label{eq_Berge_argument_1}
\Omega^t \times \cA^t \ni (\omega^t,a^t) \mapsto \sup_{a\in \cA_t(\omega^t)}\inf_{\PP \in \mathcal{P}_t(\omega^t)} \E_{\PP} \left[\Psi_{t+1} \left(\omega^t \otimes_t \cdot, (a^t,a)\right)\right]=\Psi_t(\omega^t,a^t)
\end{equation}
is continuous and that a maximizer exists. By the measurable maximum theorem  we deduce therefore the existence of a measurable map
$
\Omega^t \times \cA^t \ni (\omega^t,a^t) \mapsto \widetilde{a}_t^* \left(\omega^t, a^t\right) \in \cA_t(\omega^t)
$
satisfying for all $(\omega^t,a^{t}) \in \Omega^t \times \cA^{t}$ that
\[
\sup_{\widetilde{a} \in \cA_t(\omega^t)} \inf_{\PP \in \mathcal{P}_t(\omega^t)} \E_{\PP} \left[\Psi_{t+1} \left(\omega^t \otimes_t \cdot, (a^{t},\widetilde{a})\right)\right]=\inf_{\PP \in \mathcal{P}_t(\omega^t)} \E_{\PP} \bigg[\Psi_{t+1} \bigg(\omega^t \otimes_t \cdot, ~\left(a^{t},\widetilde{a}_t^* \left(\omega^t, a^t\right)\right)\bigg)\bigg],
\]
which shows (ii).

It remains to prove (iii). We assume that \eqref{eq_linear_growth_thm_1} holds and show the polynomial growth condition stated in \eqref{eq_linear_growth_thm_2}.
 To this end, we consider arbitrary elements $(\omega^t,a^t), (\widetilde{\omega}^{t},\widetilde{a}^t)  \in \Omega^t\times \cA^t$,
  we set $C_{\Psi,t}:= 2 C_{\Psi,t+1}\cdot C_{\mathcal{P},t}\geq 1$ and obtain by \eqref{eq_asu_P_ineq_1} and \eqref{eq_linear_growth_thm_1} that
\begin{align*}
\left|\Psi_t(\omega^t,a^t)\right|&\leq  \sup_{\widetilde{a} \in \cA_t(\omega^t) } \inf_{\PP \in \mathcal{P}_t(\omega^t)} \int_{\Omloc} \left|\Psi_{t+1}\left((\omega^t,\omega),(a^t,\widetilde{a})\right)\right|\PP(\D \omega)\\
&\leq  
\inf_{\PP \in \mathcal{P}_t(\omega^t)} \int_{\Omloc} C_{\Psi,t+1} \left(1+ \sum_{i=1}^t \|\omega_i\|^p + \|\omega\|^p \right) \PP( \D \omega) \\
&=  C_{\Psi,t+1} \left(1+ \sum_{i=1}^t \|\omega_i\|^p +\inf_{\PP \in \mathcal{P}_t(\omega^t)} \int_{\Omloc} \|\omega\|^p \,\PP( \D \omega) \right)\\
&\leq  C_{\Psi,t+1} \left(1+\sum_{i=1}^t \|\omega_i\|^p +C_{\mathcal{P},t}\left(1+ \sum_{i=1}^t\|\omega_i\|^p\right)\right)\\
&\leq  2 C_{\Psi,t+1} \cdot C_{\mathcal{P},t}\cdot \left(1+ \sum_{i=1}^t \|\omega_i\|^p\right) =C_{\Psi,t}\cdot \left(1+ \sum_{i=1}^t \|\omega_i\|^p\right),
\end{align*}
which indeed proves \eqref{eq_linear_growth_thm_2}. 
Next, we assume that \eqref{eq_Lipschitz_1_thm_1} holds and aim at showing \eqref{eq_Lipschitz_2_thm_1}. We first compute
\begin{equation}\label{eq_proof_main_thm_ineq_Psi_t_1}
\begin{aligned}
&\Psi_t\left(\omega^t,a^t\right)-\Psi_t\left(\widetilde{\omega}^{t},\widetilde{a}^t\right) \\
&= \sup_{a\in \cA_t(\omega^t)} \inf_{\PP\in \mathcal{P}_t(\omega^{t})} \E_{\PP}\left[\Psi_{t+1}\left(\omega^{t}\otimes_t \cdot ,(a^t,a)\right)\right]
-
\sup_{\widetilde{a}\in \cA_t(\widetilde{\omega}^{t})} \inf_{\widetilde{\PP}\in \mathcal{P}_t(\widetilde{\omega}^{t})} \E_{\widetilde{\PP}}\left[\Psi_{t+1}\left(\widetilde{\omega}^{t}\otimes_t \cdot ,(\widetilde{a}^t,\widetilde{a})\right)\right] \\
&=\inf_{\PP\in \mathcal{P}_t(\omega^{t})} \E_{\PP}\left[\Psi_{t+1}\left(\omega^{t}\otimes_t \cdot ,(a^t,\aloc^*)\right)\right]
-
\sup_{\widetilde{a}\in \cA_t(\widetilde{\omega}^{t})} \inf_{\widetilde{\PP}\in \mathcal{P}_t(\widetilde{\omega}^{t})} \E_{\widetilde{\PP}}\left[\Psi_{t+1}\left(\widetilde{\omega}^{t}\otimes_t \cdot ,(\widetilde{a}^t,\widetilde{a})\right)\right] \\
&\leq \inf_{\PP\in \mathcal{P}_t(\omega^{t})} \E_{\PP}\left[\Psi_{t+1}\left(\omega^{t}\otimes_t \cdot ,(a^t,\aloc^*)\right)\right]
-
\inf_{\widetilde{\PP}\in \mathcal{P}_t(\widetilde{\omega}^{t})} \E_{\widetilde{\PP}}\left[\Psi_{t+1}\left(\widetilde{\omega}^{t}\otimes_t \cdot ,(\widetilde{a}^t,\widetilde{\aloc}^*))\right)\right] \\ 
 &= \inf_{\PP\in \mathcal{P}_t(\omega^{t})} \left[\Psi_{t+1}\left(\omega^{t}\otimes_t \cdot ,(a^t,\aloc^*)\right)\right]
 - 
 \E_{\PP_1}\left[\Psi_{t+1}\left(\widetilde{\omega}^{t}\otimes_t \cdot ,(\widetilde{a}^t,\widetilde{\aloc}^*)\right)\right]\\
&\leq  \E_{\PP_2}\left[\Psi_{t+1}\left(\omega^{t}\otimes_t \cdot ,(a^t,\aloc^*)\right)\right]
- 
\E_{\PP_1}\left[\Psi_{t+1}\left(\widetilde{\omega}^{t}\otimes_t \cdot ,(\widetilde{a}^t,\widetilde{\aloc}^*)\right)\right],
\end{aligned}
\end{equation}
where $\aloc^*:= \widetilde{a}_t^*(\omega^t,a^t) \in \cA_t(\omega^t)$ with $\widetilde{a}_t^*$ denoting the minimizer from (ii) 
and  $\widetilde{\aloc}^*\in \cA_t(\widetilde{\omega}^{t})$ is chosen such that inequality~\eqref{eq_condition_A} in Assumption~\ref{asu_A}~(ii) is fulfilled w.r.t.\,$\aloc^*$, 
and where 
$\PP_1:=\widetilde{\PP}_t^*(\widetilde{\omega}^t,\widetilde{\aloc}^*)\in \mathcal{P}_t(\widetilde{\omega}^t)$ with $\widetilde{\PP}_t^*$ being the maximizer from (i) and  $\PP_2 \in \mathcal{P}_t({\omega}^t)$ is chosen such that inequality~\eqref{eq_condition_P} in Assumption~\ref{asu_P}~(iii) is fulfilled w.r.t.\,$\PP_1$.
Then, we denote by $\Pi(\PP_1,\PP_2) \subset \mathcal{P}_1(\Omloc \times \Omloc)$ the set of probability measures on $\Omloc \times \Omloc$  with respective marginal distributions $\PP_1$ and $\PP_2$. We use the representation from \eqref{eq_proof_main_thm_ineq_Psi_t_1}, apply the assumption from \eqref{eq_Lipschitz_1_thm_1}, and Jensen's inequality to obtain
\begin{align*}
&\Psi_t\left(\omega^t,a^t\right)
-
\Psi_t\left(\widetilde{\omega}^{t},\widetilde{a}^t\right) \\
&\leq  
\inf_{\pi \in \Pi(\PP_1,\PP_2)} \int_{\Omloc \times \Omloc} \bigg(\Psi_{t+1}\left(\omega^{t}\otimes_t \omega_{1, \operatorname{loc}} ,(a^t,\aloc^*)\right)\\
&\hspace{3cm}-\Psi_{t+1}\left(\widetilde{\omega}^{t}\otimes_t \omega_{2, \operatorname{loc}},(\widetilde{a}^t,\widetilde{\aloc}^*)\right)\bigg)\,\pi(\D \omega_{1, \operatorname{loc}}, \D \omega_{2, \operatorname{loc}})\\
&\leq 
 L_{\Psi,t+1} \cdot \left(  \sum_{i=1}^t\big(\left\|\omega_i-\widetilde{\omega}_i\right\|^{\alpha_{t+1}}+\|a_{i-1}-\widetilde{a}_{i-1}\|^{\alpha_{t+1}}\big)
+ \|\aloc^*-\widetilde{\aloc}^* \|^{\alpha_{t+1}} \right) \\
&\hspace{3cm}+L_{\Psi,t+1} \inf_{\pi \in \Pi(\PP_1,\PP_2)}  \int_{\Omloc \times \Omloc} \|\omega_{1, \operatorname{loc}}-\omega_{2, \operatorname{loc}}\|^{\alpha_{t+1}}\,\pi(\D \omega_{1, \operatorname{loc}}, \D \omega_{2, \operatorname{loc}})\\
&\leq 
 L_{\Psi,t+1} \cdot \left(  \sum_{i=1}^t\big(\left\|\omega_i-\widetilde{\omega}_i\right\|^{\alpha_{t+1}}+\|a_{i-1}-\widetilde{a}_{i-1}\|^{\alpha_{t+1}}\big)
+ \|\aloc^*-\widetilde{\aloc}^* \|^{\alpha_{t+1}} \right) \\
&\hspace{3cm}+L_{\Psi,t+1} \bigg(\inf_{\pi \in \Pi(\PP_1,\PP_2)}  \int_{\Omloc \times \Omloc} \|\omega_{1, \operatorname{loc}}-\omega_{2, \operatorname{loc}}\|\,\pi(\D \omega_{1, \operatorname{loc}}, \D \omega_{2, \operatorname{loc}}) \bigg)^{\alpha_{t+1}}\\
&= 
 L_{\Psi,t+1} \cdot \left(  \sum_{i=1}^t\big(\left\|\omega_i-\widetilde{\omega}_i\right\|^{\alpha_{t+1}}+\|a_{i-1}-\widetilde{a}_{i-1}\|^{\alpha_{t+1}}\big)\right)
+L_{\Psi,t+1}\|\aloc^*-\widetilde{\aloc}^* \|^{\alpha_{t+1}}
+L_{\Psi,t+1} \big(\operatorname{d}_{W_1}(\PP_1,\PP_2)\big)^{\alpha_{t+1}},
\end{align*}
where   in the last step we applied the definition of the $1$-Wasserstein distance. We now use Assumption~\ref{asu_P}~(iii) and Assumption~\ref{asu_A}~(ii), set 
$L_{\Psi,t}:= 2 L_{\Psi,t+1} \max\left\{L_{\mathcal{A},t}^{\alpha_{t+1}}+L_{\mathcal{P},t}^{\alpha_{t+1}},1\right\}\geq 0$, 
$\alpha_t:=\alpha_{t+1}\in (0,1]$,  
and obtain
\begin{align*}
&\Psi_t\left(\omega^t,a^t\right)-\Psi_t\left(\widetilde{\omega}^{t},\widetilde{a}^t\right) \\
&\leq
 L_{\Psi,t+1} \cdot \left(  \sum_{i=1}^t\left\|\omega_i-\widetilde{\omega}_i\right\|^{\alpha_{t+1}}+\|a_{i-1}-\widetilde{a}_{i-1}\|^{\alpha_{t+1}}\right)
 +L_{\Psi,t+1}\big(L_{\mathcal{A},t}^{\alpha_{t+1}}+L_{\mathcal{P},t}^{\alpha_{t+1}}\big)\left(\sum_{i=1}^t \|\omega_i- \widetilde{\omega}_t\|\right)^{\alpha_{t+1}}\\
&\leq 
L_{\Psi,t+1} \cdot \left(  \sum_{i=1}^t\left\|\omega_i-\widetilde{\omega}_i\right\|^{\alpha_{t+1}}
+\|a_{i-1}-\widetilde{a}_{i-1}\|^{\alpha_{t+1}}\right)
+L_{\Psi,t+1}\big(L_{\mathcal{A},t}^{\alpha_{t+1}}+L_{\mathcal{P},t}^{\alpha_{t+1}}\big)\left(\sum_{i=1}^t \|\omega_i- \widetilde{\omega}_t\|^{\alpha_{t+1}}\right)\\
&\leq 
L_{\Psi,t} \cdot \left(  \sum_{i=1}^t\left\|\omega_i-\widetilde{\omega}_i\right\|^{\alpha_{t}}+\|a_{i-1}-\widetilde{a}_{i-1}\|^{\alpha_{t}}\right).
\end{align*}
By interchanging the roles of $\Psi_t\left(\omega^t,a^t\right)$ and $\Psi_t\left(\widetilde{\omega}^{t},\widetilde{a}^t\right) $ we obtain
\[
\left|\Psi_t\left(\omega^t,a^t\right)-\Psi_t\left(\widetilde{\omega}^{t},\widetilde{a}^t\right)\right|\leq L_{\Psi,t} \cdot \left(  \sum_{i=1}^t\left\|\omega_i-\widetilde{\omega}_i\right\|^{\alpha_{t}}+\|a_{i-1}-\widetilde{a}_{i-1}\|^{\alpha_{t}}\right).
\]
 This shows \eqref{eq_Lipschitz_2_thm_1}.
\end{proof}
}

As a consequence of Lemma~\ref{lem_appendix_backwards_iteration} we obtain the following corollary for the case $t=0$.

\begin{cor}\label{cor_appendix_backwards_iteration}
Let  $\Omloc \times \cA_0 \ni (\omega,a)\mapsto \Psi_{1}(\omega,a) \in \R$ 
Assume that there exists some $C_{\Psi,1}\geq 1$,
{ $\alpha_1 \in (0,1]$,} and $L_{\Psi,1}\geq 0$ such that for all $(\omega,a), (\widetilde{\omega},\widetilde{a}) \in \Omloc\times \cA_0$  we have
\begin{equation}\label{eq_linear_growth_cor_1}
\left|\Psi_{1}(\omega,a)\right| \leq C_{\Psi,1} \cdot \left(1+ \| \omega\|^p\right) 
\end{equation}
as well as
\begin{equation}\label{eq_Lipschitz_1_cor_1}
{ 
\left|\Psi_{1}\left(\omega,a\right)-\Psi_{1}\left(\widetilde{\omega},\widetilde{a}\right)\right| \leq L_{\Psi,1} \left(||\omega-\widetilde{\omega}\|^{\alpha_1 }+\|a-\widetilde{a}\|^{\alpha_1}\right).
}
\end{equation}
Then, the following holds.
\begin{itemize}
\item[(i)]  There exists a measurable map $\cA_0 \ni a \mapsto \widetilde{\PP}_0^*(a) \in \mathcal{M}_1(\Omloc)$ satisfying for all $a \in \cA_0$ that  $\widetilde{\PP}_0^*(a) \in \mathcal{P}_0$ and
\[
\E_{\widetilde{\PP}_0^*(a)} \left[\Psi_{1} \left(\cdot, a\right)\right]=\inf_{\PP \in \mathcal{P}_0} \E_{\PP} \left[\Psi_{1} \left(\cdot, a\right)\right].
\]
\item[(ii)] There exists some  $a_0^* \in \cA_0$ such that 
\[
\Psi_0:=\sup_{\widetilde{a} \in \cA_0}\inf_{\PP\in \mathcal{P}_1}\E_{\PP}\left[\Psi_{1}\left(\cdot, \widetilde{a})\right)\right] = \inf_{\PP\in \mathcal{P}_1}\E_{\PP}\left[\Psi_{1}\left(\cdot, a_0^*)\right)\right].
\]
\end{itemize}
\end{cor}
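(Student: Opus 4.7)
The plan is to mirror the strategy used in the proof of Lemma~\ref{lem_appendix_backwards_iteration}, but specialized to the ``state-free'' case $t=0$, where $\mathcal{P}_0$ is a single compact subset of $(\mathcal{M}_1^p(\Omloc),\tau_p)$ rather than a correspondence depending on $\omega^t$. Both parts of the corollary will fall out from an application of Berge's maximum theorem together with the measurable maximum theorem, followed by compactness of $A_0$.

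First I would define the auxiliary map
\[
F : A_0 \times \mathcal{P}_0 \to \R, \qquad F(a,\PP) := \E_{\PP}\left[\Psi_1(\cdot, a)\right],
\]
and verify that $F$ is jointly continuous. For a sequence $(a_n,\PP_n) \to (a,\PP)$ I would split
\[
|F(a_n,\PP_n) - F(a,\PP)| \leq |F(a_n,\PP_n) - F(a,\PP_n)| + |F(a,\PP_n) - F(a,\PP)|.
\]
The second summand tends to zero by $\PP_n \xrightarrow{\tau_p} \PP$ combined with the polynomial growth bound \eqref{eq_linear_growth_cor_1}, exactly as in Lemma~\ref{lem_appendix_backwards_iteration} (note that $\Psi_1(\cdot,a) \in C_p(\Omloc,\R)$ and that the case $p=0$ is covered by Assumption~\ref{asu_P}(i), which ensures $\mathcal{P}_0 \subseteq \mathcal{M}_1^1(\Omloc)$). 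The first summand is controlled by the Lipschitz property \eqref{eq_Lipschitz_1_cor_1}, yielding the bound $L_1 \cdot \|a_n - a\| \to 0$ uniformly in $\omega \in \Omloc$.

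Given continuity of $F$, non-emptiness and compactness of $\mathcal{P}_0$ from Assumption~\ref{asu_P}(iv), and viewing the constant correspondence $a \twoheadrightarrow \mathcal{P}_0$ as trivially (upper- and lower-hemi)continuous with compact values, Berge's maximum theorem applied to $\inf_{\PP \in \mathcal{P}_0} F(a,\PP)$ yields that
\[
A_0 \ni a \mapsto \inf_{\PP \in \mathcal{P}_0} \E_{\PP}\left[\Psi_1(\cdot, a)\right]
\]
is continuous and that the argmin correspondence is non-empty and compact-valued. The measurable maximum theorem (e.g., \cite[Theorem 18.19]{Aliprantis}) then produces a measurable selector $A_0 \ni a \mapsto \widetilde{\PP}_0^*(a) \in \mathcal{P}_0$ attaining this infimum, which is exactly part (i).

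For part (ii), since $A_0$ is compact and we have just shown that $a \mapsto \inf_{\PP \in \mathcal{P}_0} \E_{\PP}[\Psi_1(\cdot,a)]$ is continuous, the supremum over $A_0$ is attained at some point $a_0^* \in A_0$, giving the claimed identity. I do not anticipate a genuine obstacle here: the only thing to watch is that the proof of Lemma~\ref{lem_appendix_backwards_iteration} relied on Assumption~\ref{asu_P}(iii) for the Lipschitz propagation of $\Psi_t$, but since the corollary only claims existence of the optimizers (not regularity of a resulting $\Psi_0$), that stability argument is not needed at the $t=0$ step.
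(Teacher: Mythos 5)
Your proposal is correct and follows essentially the same route as the paper, whose proof of this corollary consists precisely of the remark that it "follows by the same arguments as the proof of Lemma~\ref{lem_appendix_backwards_iteration}" — i.e.\ joint continuity of $(a,\PP)\mapsto \E_{\PP}[\Psi_1(\cdot,a)]$, Berge's maximum theorem with the (constant, compact, non-empty) correspondence $a\twoheadrightarrow\mathcal{P}_0$ from Assumption~\ref{asu_P}~(iv), the measurable maximum theorem for the selector in (i), and compactness of $A_0$ for (ii), with no need of the Lipschitz propagation step that used Assumption~\ref{asu_P}~(iii). One minor slip: your parenthetical for $p=0$ cites Assumption~\ref{asu_P}~(i), which only concerns $t\in\{1,\dots,T-1\}$ and not $\mathcal{P}_0$; it is also unnecessary, since for $p=0$ the growth bound \eqref{eq_linear_growth_cor_1} already makes $\Psi_1(\cdot,a)$ bounded, so $\Psi_1(\cdot,a)\in C_p(\Omloc,\R)$ and $\tau_p$-convergence suffices.
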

\begin{proof}
This follows by the same arguments as the proof of Lemma~\ref{lem_appendix_backwards_iteration}.
\end{proof}
{
\begin{rem}\label{rem:backwards_iteration}
Let Assumption~\ref{asu_A}, Assumption~\ref{asu_psi}, and Assumption~\ref{asu_P} hold, and let $p \in \N_0$ be the integer from Assumption~\ref{asu_psi}. Moreover, for every $t=T-1,\dots, 1,0$ let $\Psi_t: \Omega^t \times \cA^t \to \R$ be recursively defined via \eqref{eq_defn_J_t}--\eqref{eq_defn_Psi_0}. Then, we see from Lemma~\ref{lem_appendix_backwards_iteration}, Corollary~\ref{cor_appendix_backwards_iteration}, and their proofs that each 
$\Psi_t: \Omega^t \times \cA^t \to \R$ satisfies \eqref{eq_linear_growth_thm_2} and \eqref{eq_Lipschitz_2_thm_1} with 
\begin{equation}\label{eq:constants_backwards_iteration}
\begin{split}
	C_{\Psi,t}&:= 2^{T-t} C_{\Psi}\prod_{s=t}^{T-1} C_{\mathcal{P},s}\geq 1,\\
	\alpha_t&:=\alpha \in (0,1],\\
	L_{\Psi,t}&:= 2^{T-t} L_{\Psi} \prod_{s=t}^{T-1} \max\left\{L_{\mathcal{A},s}^{\alpha}+L_{\mathcal{P},s}^{\alpha},1\right\}\geq 0.
\end{split}
\end{equation} 
\end{rem}
}
Now we are able to present  the proof of Theorem~\ref{thm_main_result}.
\vspace{0.3cm}
\begin{proof}[Proof of Theorem~\ref{thm_main_result}]~
\begin{itemize}
\item[(i)]
Since Assumption~\ref{asu_psi} is fulfilled for $\Psi = \Psi_T$, we obtain by a recursive application of Lemma~\ref{lem_appendix_backwards_iteration}~(iii) for $t=T-1,\dots,0$ that $\Psi_{t+1}$  satisfies the requirements of Lemma~\ref{lem_appendix_backwards_iteration}, i.e., $\Psi_{t+1}$ fulfills \eqref{eq_linear_growth_thm_1} and \eqref{eq_Lipschitz_1_thm_1}. Then,  the assertion follows by Lemma~\ref{lem_appendix_backwards_iteration}~(i) and (ii) for $t=T-1,\dots,1$ and by Corollary~\ref{cor_appendix_backwards_iteration}~(i) and (ii) for $t=0$.
\item[(ii)]
Let $t \in \{0,\dots,T-1\}$.
In the following, for $\omega^t =(\omega_1,\dots,\omega_t) \in \Omega^t$ we denote for any $0 \leq s \leq t$ by $\omega^s:=(\omega_1,\dots,\omega_s) \in \Omega^s$ the first $s$ coordinates of $\omega^t$.

Note that by definition of the kernel $\PP_t^*$ we have for all $\omega^t \in \Omega^t$ that
\begin{equation}\label{eq_min_eq_iii}
\inf_{\PP \in \mathcal{P}_t(\omega^t)} \E_{\PP} \left[\Psi_{t+1} \left(\omega^t \otimes_t \cdot,~ (a_s^*(\omega^s))_{s=0,\dots,t}  \right)\right] = \E_{\PP_t^*(\omega^t)} \left[\Psi_{t+1} \left(\omega^t \otimes_t \cdot,~ (a_s^*(\omega^s))_{s=0,\dots,t}  \right)\right].
\end{equation}
Then, by definition of $\ab^*$, $\PP_t^*$ and  of $\Psi_t$, we obtain for all $\omega^t \in \Omega^t$ that
\begin{align*}
\Psi_t\left(\omega^t,(a_s^*(\omega^s))_{s=0,\dots,t-1}\right) &=  \sup_{\widetilde{a} \in A} J_t\left(\omega^t, ~((a_s^*(\omega^s))_{s=0,\dots,t-1},\widetilde{a})\right)\\
&= J_t\left(\omega^t, ~((a_s^*(\omega^s))_{s=0,\dots,t-1},a^*_t(\omega^t)\right)\\
&=\inf_{\PP \in \mathcal{P}_t(\omega^t)} \E_{\PP} \left[\Psi_{t+1} \left(\omega^t \otimes_t \cdot,~ (a_s^*(\omega^s))_{s=0,\dots,t}  \right)\right]\\
&= \E_{\PP_t^*(\omega^t)} \left[\Psi_{t+1} \left(\omega^t \otimes_t \cdot,~ (a_s^*(\omega^s))_{s=0,\dots,t}  \right)\right] .
\end{align*}
Further, by definition of $\ab^*$, we have for all $\PP=\PP_0 \otimes \PP_1 \otimes \cdots \otimes \PP_{T-1} \in \mathfrak{P}$ that
\begin{equation}\label{P_star_ineq}
\begin{aligned}
&\E_{\PP} \left[\Psi_{t+1} \left(a_0^*,\dots,a_t^* \right)\right]\\
&=\int_{\Omega^{t+1}} \Psi_{t+1}\left((\omega^t,\omega),~(a_s^*(\omega^s))_{s=0,\dots,t} \right) \PP_0\otimes \cdots \otimes \PP_t (\D (\omega^t, \omega)) \\
&=\int_{\Omega^{t}} \left(\int_{\Omloc} \Psi_{t+1}\left((\omega^t,\omega),~(a_s^*(\omega^s))_{s=0,\dots,t} \right) \PP_t(\omega^t ; \D \omega) \right) \PP_0\otimes \cdots \otimes \PP_{t-1} (\D \omega^t)\\
&\geq \int_{\Omega^{t}} \inf_{\widetilde{\PP}_t \in \mathcal{P}_t(\omega^t)} \left(\int_{\Omloc} \Psi_{t+1}\left((\omega^t,\omega),~(a_s^*(\omega^s))_{s=0,\dots,t} \right)\widetilde{\PP}_t(\omega^t ; \D \omega) \right) \PP_0\otimes \cdots \otimes \PP_{t-1} (\D \omega^t)\\
&= \int_{\Omega^{t}} \Psi_{t}\left(\omega^t,~(a_s^*(\omega^s))_{s=0,\dots,t-1} \right) \PP_0\otimes \cdots \otimes \PP_{t-1} (\D \omega^t)\\
&=\E_{\PP} \left[\Psi_{t} \left(a_0^*,\dots,a_{t-1}^* \right)\right].
\end{aligned}
\end{equation}
Repeatedly applied, the above inequality \eqref{P_star_ineq} yields
\begin{equation}\label{P_star_ineq_2}
\E_{\PP} \left[\Psi_{T} \left(a_0^*,\dots,a_{T-1}^* \right)\right] \geq \Psi_0,
\end{equation}
and since $\PP\in \mathfrak{P}$ was chosen arbitrarily we also get, by using $\Psi = \Psi_T$, that
\begin{equation}\label{eq_ineq_proof_Psi_t_Psi_01}
\inf_{\PP\in \mathfrak{P}}\E_{\PP} \left[\Psi \left(a_0^*,\dots,a_{T-1}^* \right)\right] =\inf_{\PP\in \mathfrak{P}}\E_{\PP} \left[\Psi_{T} \left(a_0^*,\dots,a_{T-1}^* \right)\right] \geq \Psi_0.
\end{equation}
By considering the measure
\[
\PP^*: = \PP_0^* \otimes \cdots  \otimes\PP_{T-1}^*  \in \mathfrak{P},
\]
and by using \eqref{eq_min_eq_iii}, we obtain equality in \eqref{P_star_ineq} and \eqref{P_star_ineq_2}. This means, together with \eqref{eq_ineq_proof_Psi_t_Psi_01}, that  we have
\begin{equation}\label{eq_ineq_proof_Psi_t_Psi_0}
\inf_{\PP\in \mathfrak{P}}\E_{\PP} \left[\Psi \left(a_0^*,\dots,a_{T-1}^* \right)\right] \geq \Psi_0 = \E_{\PP^*}\left[\Psi \left(a_0^*,\dots,a_{T-1}^* \right)\right].
\end{equation}
Now let $(a_s(\cdot))_{s=0,\dots,T-1}\in \mathfrak{A} $ be an arbitrary control. Then, we have for all $\omega^t \in \Omega^t$ that
\begin{equation}\label{eq_ineq_proof2}
\begin{aligned}
&\hspace{-1cm}\E_{\widetilde{\PP}_t^*(\omega^t, (a_s(\omega^s))_{s=0,\dots,t})} \left[ \Psi_{t+1}\left(\omega^t \otimes_t \cdot, (a_s(\omega^s))_{s=0,\dots,t} \right) \right] \\
&= \inf_{\PP_t \in \mathcal{P}_t(\omega^t)} \E_{\PP_t} \left[ \Psi_{t+1}\left(\omega^t \otimes_t \cdot, (a_s(\omega^s))_{s=0,\dots,t} \right) \right]\\
&\leq \sup_{\widetilde{a} \in A_t}\inf_{\PP_t \in \mathcal{P}_t(\omega^t)} \E_{\PP_t} \left[ \Psi_{t+1}\left(\omega^t \otimes_t \cdot, \left((a_s(\omega^s))_{s=0,\dots,t-1},\widetilde{a}\right) \right) \right]\\
&= \sup_{\widetilde{a} \in A_t}J_t\left(\omega^t, \left((a_s(\omega^s))_{s=0,\dots,t-1},\widetilde{a}\right)\right) \\
&= \Psi_t \left( \omega^t, \left((a_s(\omega^s))_{s=0,\dots,t-1}\right)\right).
\end{aligned}
\end{equation}
Let $\PP=\PP_0  \otimes \cdots  \otimes\PP_{T-1} \in \mathfrak{P}$ be arbitrary and define 
\[
\Omega^t\ni \omega^t \mapsto {\PP}_t^{*,a}(\omega^t):=\widetilde{\PP}_t^*\left(\omega^t,~(a_s(\omega^s))_{s=0,\dots,t}\right) \in \mathcal{M}_1(\Omloc).
\]
Then \eqref{eq_ineq_proof2} implies
\begin{equation}\label{eq_ineq_proof3}
\begin{aligned}
\E_{\PP}\left[\Psi_{t}(a_0,\dots,a_{t-1})\right]&=\E_{\PP_0  \otimes \cdots \otimes \PP_{t-1}}\left[\Psi_{t}(a_0,\dots,a_{t-1})\right]\\
&\geq \E_{\PP_0 \otimes \PP_1 \otimes \cdots \otimes \PP_{t-1} \otimes {\PP}_t^{*,a}}\left[\Psi_{t+1}(a_0,\dots,a_{t})\right]\\
&\geq \inf_{\PP^{\prime} \in \mathfrak{P}} \E_{\PP^{\prime}}\left[\Psi_{t+1}(a_0,\dots,a_{t})\right].
\end{aligned}
\end{equation}
Hence, as $\PP \in \mathfrak{P}$ was arbitrary, we obtain from \eqref{eq_ineq_proof3} that
\begin{equation}\label{eq_ineq_inf_psi_ineq_psi}
\inf_{\PP \in \mathfrak{P}} \E_{\PP}\left[\Psi_{t+1}(a_0,\dots,a_{t})\right] \leq \inf_{\PP \in \mathfrak{P}} \E_{\PP} \left[\Psi_{t}(a_0,\dots,a_{t-1})\right].
\end{equation}
By a repeated application of the latter inequality \eqref{eq_ineq_inf_psi_ineq_psi} one sees that 
\begin{equation}
\inf_{\PP \in \mathfrak{P}} \E_{\PP}\left[\Psi(a_0,\dots,a_{T-1})\right] =\inf_{\PP \in \mathfrak{P}} \E_{\PP}\left[\Psi_{T}(a_0,\dots,a_{T-1})\right] \leq \Psi_{0}.
\end{equation}
Since  $(a_0,\dots,a_{T-1}) \in \mathfrak{A}$ was also chosen arbitrarily we obtain that
\begin{equation}
\sup_{a\in \mathfrak{A}}\inf_{\PP \in \mathfrak{P}} \E_{\PP}\left[\Psi(a_0,\dots,a_{T-1})\right] \leq \Psi_{0},
\end{equation}
which implies together with \eqref{eq_ineq_proof_Psi_t_Psi_0} that
\[
\sup_{a\in \mathfrak{A}}\inf_{\PP \in \mathfrak{P}} \E_{\PP}\left[\Psi(a_0,\dots,a_{T-1})\right] = \inf_{\PP\in \mathfrak{P}}\E_{\PP} \left[\Psi \left(a_0^*,\dots,a_{T-1}^* \right)\right] = \Psi_{0} = \E_{\PP^*}\left[\Psi \left(a_0^*,\dots,a_{T-1}^* \right)\right].
\]
\end{itemize}

\end{proof}

\subsection{Proofs of Section~\ref{sec_ambiguity}}
\subsubsection{Proofs of Section~\ref{sec_ambiguity_wasserstein}}
Before reporting the proof of Theorem~\ref{thm_wasserstein_ambiguity}
 we present several auxiliary results. 
 { 
 	%
 	\begin{lem}\label{lem:Lipschitz-implies-LHC}
 		Let $(\mathcal{X},d_{\mathcal{X}})$ and  $(\mathcal{Y},d_{\mathcal{Y}})$ be metric spaces  and let $\Lambda:\mathcal{X}\twoheadrightarrow \mathcal{Y}$ be a correspondence satisfying the following: there exists a constant $L\geq 0$ such that for each $x, \widetilde{x} \in \mathcal{X}$ and each $y\in \Lambda(x)$ there exists $\widetilde{y} \in \Lambda (\widetilde{x})$ such that
 		\begin{equation}\label{eq:Lipschitz-set-valued-condition}
 			d_{\mathcal{Y}}(y,\widetilde{y}) 
 			\leq
 			L \cdot d_{\mathcal{X}}(x,\widetilde{x}).
 		\end{equation}
 		Then $\Lambda:\mathcal{X}\twoheadrightarrow \mathcal{Y}$ is lower-hemicontinuous.
 	\end{lem}
 	\begin{proof}
 		We aim to apply \cite[Theorem~17.21]{Aliprantis}. To that end let $(x_n)_{n \in \N}\subseteq \mathcal{X}$ which converges to some $x\in \mathcal{X}$ and let $y \in \Lambda(x)$. By \eqref{eq:Lipschitz-set-valued-condition}, there exists for each $n \in \N$ some $y_n \in \Lambda(x_n)$ satisfying
 		\begin{equation*}
 			d_{\mathcal{Y}}(y,y_n) 
 			\leq
 			L \cdot d_{\mathcal{X}}(x,x_n).
 		\end{equation*}
 		Since the sequence $(x_n)_{n \in \N}$  converges to $x$, we  derive that also the sequence $(y_n)_{n \in \N}$ converges to $y$. Hence we can directly apply \cite[Theorem~17.21]{Aliprantis} to conclude that $\Lambda:\mathcal{X}\twoheadrightarrow \mathcal{Y}$ is lower-hemicontinuous.
 	\end{proof}
 }

{The statements of the following four simple lemmas about  Wasserstein spaces  are known in the literature especially for the case $p=0$, but we nevertheless present them for the sake of completeness.}

We use the notation $\Pi(\mu,\nu) \subset \mathcal{M}_1(\mathcal{X} \times \mathcal{X})$ to describe the set of all joint distributions of probability measures $\mu,\nu \in \mathcal{M}_1(\mathcal{X})$ on some suitable space $\mathcal{X}$.
\begin{lem}\label{lem_rel_compact}
Let $p\in \N_0$ and let $\left(\mathcal{X},\|\cdot \|_\mathcal{X}\right)$ be a separable Banach space. Let $\left(\mu_1^{(n)},\mu_2^{(n)}\right)_{n \in \N}\subseteq \mathcal{M}_1^p(\mathcal{X}) \times \mathcal{M}_1^p(\mathcal{X})$ {converge} in $\tau_p \times \tau_p$ to some $(\mu_1^\infty, \mu_2^\infty) \in \mathcal{M}_1^p(\mathcal{X}) \times \mathcal{M}_1^p(\mathcal{X}) $. Then $\bigcup_{n \in \N} \Pi(\mu_1^{(n)}, \mu_2^{(n)})$ is relatively compact w.r.t. $\tau_{p}\times \tau_p$.
\end{lem}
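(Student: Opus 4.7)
The plan is to invoke the standard characterization of relative compactness in $\tau_p$ on a Polish space $Y$: a family $\mathcal{F}\subseteq \mathcal{M}_1^p(Y)$ is relatively $\tau_p$-compact if and only if it is tight and uniformly $p$-integrable, i.e.\ $\lim_{R\to\infty}\sup_{\mu\in\mathcal{F}}\int \|y\|^p\one_{\{\|y\|>R\}}\,d\mu(y)=0$. I apply this with $Y = X\times X$ (equipped with any equivalent Banach norm, say $\|(x,y)\|=\|x\|+\|y\|$) and $\mathcal{F}=\bigcup_{n\in\N}\Pi(\mu_1^{(n)},\mu_2^{(n)})$, so the proof splits into proving tightness and proving uniform $p$-integrability of this union.

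For tightness, since $\mu_i^{(n)}\xrightarrow{\tau_p}\mu_i^\infty$ entails weak convergence, each marginal sequence $(\mu_i^{(n)})_n$ is tight by Prokhorov. Given $\varepsilon>0$, I would pick compacts $K_i\subseteq X$ with $\sup_n \mu_i^{(n)}(K_i^c)\leq \varepsilon/2$; then $K_1\times K_2$ is compact in $X\times X$ and every $\pi\in\Pi(\mu_1^{(n)},\mu_2^{(n)})$ satisfies
\[
\pi((K_1\times K_2)^c)\leq \pi(K_1^c\times X)+\pi(X\times K_2^c)=\mu_1^{(n)}(K_1^c)+\mu_2^{(n)}(K_2^c)\leq \varepsilon.
\]
This already finishes the case $p=0$, where $\tau_0$ is the weak topology.

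For $p\geq 1$, convergence in $\tau_p$ of $(\mu_i^{(n)})_n$ is equivalent to weak convergence together with convergence of the $p$-th moment, which forces the family $\{\|x\|^p\}$ to be uniformly integrable under $(\mu_i^{(n)})_n$ for $i=1,2$. By the de la Vall\'ee-Poussin theorem I can choose a single convex, increasing function $G\colon [0,\infty)\to [0,\infty)$ with $G(r)/r\to \infty$ as $r\to\infty$ and satisfying a $\Delta_2$-type condition $G(cr)\leq A_c\,G(r)+B_c$ (for example $G(r)=r\log(1+r)$) such that $\sup_n\int G(\|x\|^p)\,d\mu_i^{(n)}<\infty$ for $i=1,2$. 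Using $\|(x,y)\|^p\leq C_p(\|x\|^p+\|y\|^p)$ for some $C_p>0$ and the convexity of $G$,
\[
G\bigl(\|(x,y)\|^p\bigr)\leq \tfrac{1}{2}\,G\!\bigl(2C_p\|x\|^p\bigr)+\tfrac{1}{2}\,G\!\bigl(2C_p\|y\|^p\bigr).
\]
Integrating against any $\pi\in\Pi(\mu_1^{(n)},\mu_2^{(n)})$ and invoking the $\Delta_2$-condition yields a bound on $\int G(\|(x,y)\|^p)\,d\pi$ that is finite and uniform in $n$ and in the choice of $\pi$. Applying de la Vall\'ee-Poussin in reverse then gives uniform $p$-integrability of the union, which together with tightness furnishes the desired relative $\tau_p$-compactness.

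The principal obstacle is exactly the last step: uniform $p$-integrability of the marginals does not transfer to the couplings by a naive Chebyshev splitting, since a term such as $\int \|y\|^p\one_{\{\|x\|>R\}}\,d\pi$ may stay of constant order as $R\to \infty$ if the coupling concentrates large $\|y\|$-values on $\{\|x\|>R\}$. The de la Vall\'ee-Poussin route with a $\Delta_2$-function $G$ sidesteps this: convexity converts the pointwise product bound into two separate one-variable integrals against the marginals, while the $\Delta_2$-condition absorbs the multiplicative constant coming from $\|(x,y)\|^p\leq C_p(\|x\|^p+\|y\|^p)$.
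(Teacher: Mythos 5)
Your proof is correct, but it establishes the key uniform $p$-integrability of the couplings by a different device than the paper. Both arguments share the same skeleton: relative compactness w.r.t.\ $\tau_p$ on the Polish space $X\times X$ is reduced to tightness plus uniform $p$-integrability of $\bigcup_{n}\Pi(\mu_1^{(n)},\mu_2^{(n)})$; the paper quotes the proof of Cl\'ement--Desch for the weak/tightness part and Panaretos--Zemel (Proposition 2.2.3) for the compactness criterion, while you rederive tightness directly from Prokhorov via $\pi(K_1^c\times X)+\pi(X\times K_2^c)=\mu_1^{(n)}(K_1^c)+\mu_2^{(n)}(K_2^c)$, which is fine. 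For the integrability step the paper bounds the tail integrals of an arbitrary coupling directly by tail integrals of the two marginals, whereas you go through de la Vall\'ee-Poussin, choosing a convex $G$ with $G(r)/r\to\infty$ satisfying a $\Delta_2$-type bound, and transfer the Orlicz-type moment bound from the marginals to the couplings by convexity; this is valid but heavier, and it relies on the (standard, but unproved in your sketch) refinement of de la Vall\'ee-Poussin that such a moderately growing $G$ can always be chosen. One substantive remark: the ``principal obstacle'' you invoke to justify this detour is not a genuine obstacle. A term such as $\int \|y\|^p\one_{\{\|x\|>R\}}\,\D\pi$ cannot stay of constant order: bounding it by $K^p\,\mu_1^{(n)}(\{\|x\|>R\})+\int_{\{\|y\|>K\}}\|y\|^p\,\D\mu_2^{(n)}$, and using Markov's inequality together with the uniform $p$-integrability of the marginal sequences, makes it small uniformly in $n$ and in the coupling. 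Equivalently, on $\{\|(x,y)\|>R\}$ one may split according to which coordinate exceeds $R/2$ and dominate the small coordinate by the large one, so the elementary tail estimate (essentially the paper's computation, with the cutoff $R$ replaced by $R/2$ on the marginals) already yields uniform $p$-integrability of the couplings without any Orlicz-function machinery.
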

\begin{proof}
For the case $p=0$, this result follows from, e.g., the proof of \cite[Lemma 5.2]{clement2008wasserstein}. Hence, w.l.o.g.\ we can assume $p>0$. By \cite[Proof of Lemma 5.2]{clement2008wasserstein} we further know that $\bigcup_{n \in \N} \Pi(\mu_1^{(n)}, \mu_2^{(n)})$ is weakly relatively compact. Let $\|\cdot \|_{\mathcal{X} \times \mathcal{X}} $ be the norm on $\mathcal{X}\times \mathcal{X}$ defined by 
\[
\|(x,y) \|_{\mathcal{X} \times \mathcal{X}}:= \|x\|_\mathcal{X}+\|y\|_\mathcal{X},\qquad (x,y) \in  \mathcal{X} \times \mathcal{X}.
\]
Then, we have for all $x,y \in \mathcal{X}$ that $$\|(x,y)\|_{\mathcal{X} \times \mathcal{X}}^p = \|(x,0)+(0,y)\|_{\mathcal{X} \times \mathcal{X}}^p \leq 2^p \left( \|(x,0)\|_{\mathcal{X} \times \mathcal{X}}^p+\|(0,y)\|_{\mathcal{X} \times \mathcal{X}}^p \right)= 2^p \left( \|x\|_{\mathcal{X}}^p+\|y\|_{\mathcal{X}}^p \right),$$
as well as $\|(x,y)\|_{\mathcal{X} \times \mathcal{X}} \geq \max\{\|x\|_\mathcal{X},\|y\|_\mathcal{X}\}$.  Hence, we have for all $R>0$
\begin{align}
&\sup_{\pi \in \bigcup_{n \in \N} \Pi(\mu_1^{(n)}, \mu_2^{(n)})} \int_{\{(x,y) \in \mathcal{X} \times \mathcal{X}: \|(x,y)\|_{\mathcal{X}\times \mathcal{X}}>R\}}\|(x,y)\|_{\mathcal{X}\times \mathcal{X}}^p \,\pi(\D x, \D y) \label{eq_weak_compact_1}\\
\leq &2^p \cdot \sup_{\pi \in \bigcup_{n \in \N} \Pi(\mu_1^{(n)}, \mu_2^{(n)})} \int_{\{(x,y) \in \mathcal{X} \times \mathcal{X}: \|(x,y)\|_{\mathcal{X}\times \mathcal{X}}>R\}}\|x\|_\mathcal{X}^p \,\pi(\D x, \D y) \notag\\
&\hspace{2cm}+2^p \cdot \sup_{\pi \in \bigcup_{n \in \N} \Pi(\mu_1^{(n)}, \mu_2^{(n)})}  \int_{\{(x,y) \in \mathcal{X} \times \mathcal{X}: \|(x,y)\|_{\mathcal{X} \times \mathcal{X}} >R\}}\|y\|_\mathcal{X}^p \,\pi(\D x, \D y)  \notag \\
\leq &2^p \cdot \sup_{n \in \N} \int_{\{x \in \mathcal{X}: \|x\|_\mathcal{X}>R\}}\|x\|_\mathcal{X}^p \, \mu_1^{(n)} (\D x)+2^p \cdot \sup_{n \in \N}  \int_{\{y \in \mathcal{X}: \|y\|_\mathcal{X}>R\}}\|y\|_\mathcal{X}^p \, \mu_2^{(n)} (\D y). \notag
\end{align}
Now, the convergence $\lim_{n \rightarrow \infty}\left(\mu_1^{(n)},\mu_2^{(n)}\right) = (\mu_1^\infty, \mu_2^\infty) $  in $\tau_p \times \tau_p$ implies with \cite[Theorem 2.2.1 (3)]{panaretos2020invitation} both that 
$$\lim_{R \rightarrow \infty}\sup_{n \in \N} \int_{\{x \in \mathcal{X}: \|x\|_\mathcal{X}>R\}}\|x\|_\mathcal{X}^p \,\mu_1^{(n)} (\D x) =0,$$
 and that 
 $$\lim_{R \rightarrow \infty}\sup_{n \in \N}  \int_{\{y \in \mathcal{X}: \|y\|_\mathcal{X}>R\}}\|y\|_\mathcal{X}^p\, \mu_2^{(n)} (\D y) =0.$$
Therefore, the integral in \eqref{eq_weak_compact_1} vanishes as $R \rightarrow \infty$, and the relative compactness of $\bigcup_{n \in \N} \Pi(\mu_1^{(n)}, \mu_2^{(n)})$ in $\tau_p \times \tau_p$ follows directly by \cite[Proposition 2.2.3]{panaretos2020invitation}.
\end{proof}

\begin{lem}\label{lem_int_lsc}
Let $\left(\mathcal{X},\|\cdot \|_\mathcal{X}\right)$ be a separable Banach space, and let $p \in \N_0$ and $q\in \N$ with $q\geq p$. Moreover, let $f \in C_q(\mathcal{X},\R)$ be non-negative. Then
\begin{align*}
\mathcal{M}_1^q(\mathcal{X}) &\rightarrow \R\\
\mu &\mapsto \int_\mathcal{X} f(x) \,\mu(\D x)
\end{align*}
is lower semicontinuous w.r.t.\,$\tau_p$.
\end{lem}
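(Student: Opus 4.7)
The plan is to use a truncation argument to reduce the claim to the defining property of $\tau_p$-convergence in \eqref{eq_convergence_topology_1}. The subtle point is that $f$ itself is \emph{not} in general a legitimate test function for $\tau_p$: it lies in $C_q$ with possibly $q>p$, so applying \eqref{eq_convergence_topology_1} directly to $f$ is not allowed and we need to approximate $f$ from below by functions that do lie in $C_p$.

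First I would, for each $k\in \N$, define $f_k:= f\wedge k$. Since $f\in C(X,\R)$ is non-negative, $f_k$ is continuous, non-negative and bounded by $k$; in particular
\[
\sup_{x\in X}\frac{|f_k(x)|}{1+\|x\|^p} \leq k <\infty,
\]
so $f_k \in C_p(X,\R)$ for every $p\in \N_0$. Moreover, $0\leq f_k \uparrow f$ pointwise as $k\to\infty$.

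Next, let $(\mu_n)_{n\in\N}\subseteq \mathcal{M}_1^q(X)$ converge to $\mu \in \mathcal{M}_1^q(X)$ in $\tau_p$. By \eqref{eq_convergence_topology_1} applied to $f_k \in C_p(X,\R)$, we have for every fixed $k\in \N$ that
\[
\lim_{n\to\infty}\int_X f_k(x)\,\mu_n(\D x) = \int_X f_k(x)\,\mu(\D x).
\]
Since $f\geq f_k \geq 0$, it follows that $\int_X f \D\mu_n \geq \int_X f_k \D\mu_n$ for every $n$ and every $k$, and therefore
\[
\liminf_{n\to\infty}\int_X f(x)\,\mu_n(\D x) \;\geq\; \lim_{n\to\infty}\int_X f_k(x)\,\mu_n(\D x) \;=\; \int_X f_k(x)\,\mu(\D x).
\]

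Finally, I would let $k\to\infty$ on the right-hand side. By monotone convergence (using $f_k\uparrow f$ with $f_k\geq 0$) we obtain $\int_X f_k\D\mu \uparrow \int_X f\D\mu$, and hence
\[
\liminf_{n\to\infty}\int_X f(x)\,\mu_n(\D x) \;\geq\; \int_X f(x)\,\mu(\D x),
\]
which is the desired lower semicontinuity. There is no genuine obstacle here; the only thing to keep in mind is that the bound $q\geq p$ is only needed to ensure that the domain $\mathcal{M}_1^q(X)$ is contained in $\mathcal{M}_1^p(X)$ so that $\tau_p$ makes sense on it, and that $\int_X f \D\mu$ is well-defined (finite or $+\infty$) via the growth bound $|f(x)|\leq \|f\|_{C_q}(1+\|x\|^q)$ together with $\mu\in \mathcal{M}_1^q(X)$.
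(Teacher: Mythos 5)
Your proof is correct and takes essentially the same route as the paper's: approximate $f$ from below by a monotone sequence of non-negative functions in $C_p(X,\R)$, apply the defining property \eqref{eq_convergence_topology_1} of $\tau_p$ to these, and conclude with the monotone convergence theorem. The only (immaterial) difference is the choice of truncation — you use $f_k:=f\wedge k$, while the paper uses $f_k:=f\wedge \tfrac{k\, f}{1+\|x\|^{q-p}}$ (after treating $p=q$ separately) — and both choices yield the required sequence in $C_p(X,\R)$ increasing pointwise to $f$.
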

\begin{proof}
If $p=q$, then the map is continuous by definition of $\tau_p=\tau_q$. Hence, w.l.o.g.\ we can assume $q>p$. We define for each $k \in \N$ the function
\[
\mathcal{X} \ni x \mapsto f_k(x):=f(x)\wedge \frac{k \cdot f(x)}{1+\|x\|_\mathcal{X}^{q-p}}.
\]
One sees directly that $f_k \geq 0$ and that $f_k \in C_p(\mathcal{X}, \R)$ for all $k \in \N$. Moreover, we have $f_k \uparrow f$ monotonically as $k \rightarrow \infty$.
Now, let $(\mu^{(n)})_{n \in \N} \subseteq \mathcal{M}_1^q(\mathcal{X})$ and $\mu \in \mathcal{M}_1^q(\mathcal{X})$ such that $\mu^{(n)} \rightarrow \mu$ in $\tau_p$. Then, we have by the monotone convergence theorem, since $f_k \in C_p(\mathcal{X}, \R)$, and since $f_k \leq f$ that
\begin{align*}
\int_\mathcal{X} f(x) \mu(\D x) 
= \lim_{k \rightarrow \infty} \int_\mathcal{X} f_k(x)\,\mu(\D x) 
 = \lim_{k \rightarrow \infty} \lim_{n \rightarrow \infty} \int_\mathcal{X} f_k(x)\,\mu^{(n)}(\D x) 
 \leq  \liminf_{n \rightarrow \infty} \int_\mathcal{X} f(x)\, \mu^{(n)}(\D x).
\end{align*}
\end{proof}

\begin{lem}\label{lem_lsc} 
Let $\left(\mathcal{X},\|\cdot \|_\mathcal{X}\right)$ be a separable Banach space, and let $p \in \N_0$ and $q\in \N$ with $q\geq p$.
Then, the map 
\begin{align*}
\mathcal{M}_1^q(\mathcal{X}) \times \mathcal{M}_1^q(\mathcal{X})  &\rightarrow [0,\infty)\\
(\mu_1,\mu_2) &\mapsto d_{W_q}(\mu_1,\mu_2)
\end{align*}
is lower semicontinuous in $\tau_p\times \tau_p$.
\end{lem}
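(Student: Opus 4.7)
The plan is to prove sequential lower semicontinuity, which suffices because $\tau_p$ is metrisable on $\mathcal{M}_1^p(X)$. The case $p=q$ is immediate from the definition of $\tau_q$, so I assume $p<q$. Fix sequences $(\mu_i^{(n)})_n \subset \mathcal{M}_1^q(X)$ with $\mu_i^{(n)} \xrightarrow{\tau_p} \mu_i^\infty \in \mathcal{M}_1^q(X)$ for $i=1,2$. After passing to a subsequence I may assume the liminf on the right-hand side is finite and realised as a genuine limit, since otherwise the claim is trivial.

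For each $n$ pick a $\tfrac{1}{n}$-optimal coupling $\pi_n \in \Pi(\mu_1^{(n)}, \mu_2^{(n)})$, i.e.\
\[
\int_{X \times X} \|x-y\|^q \, \pi_n(\D x, \D y) \leq d_{W_q}(\mu_1^{(n)}, \mu_2^{(n)})^q + \tfrac{1}{n}.
\]
Since $\mathcal{M}_1^q(X) \subseteq \mathcal{M}_1^p(X)$, Lemma~\ref{lem_rel_compact} shows that $\bigcup_n \Pi(\mu_1^{(n)}, \mu_2^{(n)})$ is relatively compact in $\tau_p$ on $X \times X$, so along a further subsequence $\pi_{n_k} \xrightarrow{\tau_p} \pi^\infty$ for some $\pi^\infty \in \mathcal{M}_1^p(X \times X)$. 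To identify its marginals, note that for every $g \in C_p(X,\R)$ the lifts $(x,y) \mapsto g(x)$ and $(x,y) \mapsto g(y)$ belong to $C_p(X \times X, \R)$ under the natural product-sum norm; testing $\tau_p$-convergence of $\pi_{n_k}$ against these lifts yields the first and second marginals of $\pi^\infty$ as $\mu_1^\infty$ and $\mu_2^\infty$ respectively. In particular $\pi^\infty \in \Pi(\mu_1^\infty, \mu_2^\infty) \subseteq \mathcal{M}_1^q(X \times X)$, the latter inclusion holding because both marginals have finite $q$-th moment.

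Finally I would apply Lemma~\ref{lem_int_lsc} on the product space with the non-negative cost $f(x,y) := \|x-y\|^q$, which belongs to $C_q(X \times X, \R)$ via the elementary bound $\|x-y\|^q \leq 2^{q-1}(\|x\|^q + \|y\|^q)$. This gives
\[
\int \|x-y\|^q \, \pi^\infty(\D x, \D y) \leq \liminf_{k \to \infty} \int \|x-y\|^q \, \pi_{n_k}(\D x, \D y),
\]
and combining with the definition of $d_{W_q}$ and the near-optimality of $\pi_n$ produces $d_{W_q}(\mu_1^\infty,\mu_2^\infty)^q \leq \liminf_k d_{W_q}(\mu_1^{(n_k)}, \mu_2^{(n_k)})^q$, from which the claim follows by taking $q$-th roots. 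The only delicate point is the marginal identification: the two auxiliary lemmas are designed to dovetail precisely here, and the argument succeeds because $C_p$ is stable under coordinate lifts in the product norm, so $\tau_p$-convergence on $X \times X$ descends to $\tau_p$-convergence of marginals on $X$.
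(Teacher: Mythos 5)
Your proof is correct and follows essentially the same route as the paper: pass to a subsequence realising the liminf, take (near-)optimal couplings, use Lemma~\ref{lem_rel_compact} to extract a $\tau_p\times\tau_p$-convergent limit coupling, identify its marginals to see it lies in $\Pi(\mu_1^\infty,\mu_2^\infty)\subseteq\mathcal{M}_1^q(X\times X)$, and conclude with Lemma~\ref{lem_int_lsc} applied to the cost $(x,y)\mapsto\|x-y\|_X^q$. The only cosmetic deviations are your use of $\tfrac{1}{n}$-optimal rather than exact optimal couplings, the marginal identification via coordinate lifts of $C_p$-functions instead of the continuous mapping theorem, and the base case $p=q$ handled directly where the paper instead settles $p=0$ by citing \cite{clement2008wasserstein}.
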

\begin{proof}
{
Since $\tau_0\subseteq \tau_p$, the result now follows directly from in \cite[Corollary 5.3]{clement2008wasserstein} which has proven the lower semicontinuity of that map with respect to $\tau_0\times \tau_0$.}
\end{proof}

\begin{lem}\label{lem_wasserstein_compact}
Let $p \in \N_0$ and $q\in \N$ such that $q>p$. Moreover, let $\left(\mathcal{X},\|\cdot \|_\mathcal{X}\right)$ be a separable Banach space. Then, for any $\widehat{\mu} \in \mathcal{M}_1^q(\mathcal{X})$ and $\varepsilon>0$, the set
\[
\mathcal{B}_{\varepsilon}^{(q)}\left(\widehat{\mu}\right): = \left\{ \mu \in \mathcal{M}_1^q(\mathcal{X})~\middle|~ d_{W_q}\left(\mu, \widehat{\mu}\right)\leq \varepsilon \right\}
\]
is compact w.r.t.\,$\tau_p$.
\end{lem}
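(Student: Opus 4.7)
\medskip

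\noindent\textbf{Proof plan.} The plan is to split the argument into two parts: (a) relative compactness of $\mathcal{B}_{\varepsilon}^{(q)}(\widehat{\mu})$ w.r.t.\,$\tau_p$, and (b) closedness of $\mathcal{B}_{\varepsilon}^{(q)}(\widehat{\mu})$ in $(\mathcal{M}_1^p(X), \tau_p)$. Since $\tau_p$ is induced by a metric (the Prokhorov metric when $p=0$, the $p$-Wasserstein metric when $p\ge 1$), sequential compactness suffices.

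For relative compactness, I would first derive a uniform $q$-th moment bound on the ball. Fixing $x_0 \in X$ and using the triangle inequality for $d_{W_q}$ together with $\widehat{\mu}\in \mathcal{M}_1^q(X)$, one obtains for every $\mu \in \mathcal{B}_\varepsilon^{(q)}(\widehat{\mu})$ that
\[
\left(\int_X \|x-x_0\|_X^q \,\mu(\D x)\right)^{1/q} = d_{W_q}(\mu,\delta_{x_0}) \le \varepsilon + \left(\int_X \|x-x_0\|_X^q \,\widehat{\mu}(\D x)\right)^{1/q} =: M < \infty.
\]
Because $q>p$, an application of the Markov-type inequality $\int_{\{\|x\|_X>R\}} \|x\|_X^p\,\mu(\D x) \le R^{p-q} \int_X \|x\|_X^q\, \mu(\D x)$ yields uniform control of the $p$-th moment tails, and relative compactness in $\tau_p$ then follows from \cite[Proposition~2.2.3]{panaretos2020invitation}, which was already invoked in the proof of Lemma~\ref{lem_rel_compact}.

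For closedness, let $(\mu_n)_{n\in\N} \subseteq \mathcal{B}_\varepsilon^{(q)}(\widehat{\mu})$ with $\mu_n \to \mu_\infty$ in $\tau_p$. Since all bounded continuous functions belong to $C_p(X,\R)$, $\tau_p$-convergence implies weak convergence. Testing against the truncations $f_R(x):=\|x-x_0\|_X^q\wedge R \in C_b(X,\R)\subseteq C_p(X,\R)$ and then letting $R\to\infty$ via monotone convergence gives
\[
\int_X \|x-x_0\|_X^q\, \mu_\infty(\D x) = \sup_{R>0} \int_X f_R(x)\, \mu_\infty(\D x) = \sup_{R>0}\lim_{n\to\infty} \int_X f_R(x)\, \mu_n(\D x) \le M^q,
\]
so $\mu_\infty \in \mathcal{M}_1^q(X)$. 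Invoking the lower semicontinuity of $d_{W_q}$ w.r.t.\,$\tau_p$ established in Lemma~\ref{lem_lsc} (applied to the constant sequence $\widehat{\mu}$ in the second coordinate) then gives $d_{W_q}(\mu_\infty, \widehat{\mu}) \le \liminf_{n\to\infty} d_{W_q}(\mu_n, \widehat{\mu}) \le \varepsilon$, so $\mu_\infty\in \mathcal{B}_\varepsilon^{(q)}(\widehat{\mu})$.

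The main obstacle is the closedness step: the ball lives in the smaller space $\mathcal{M}_1^q(X)$ but closedness is asked in the weaker topology $\tau_p$, so one cannot simply pass to the $d_{W_q}$-limit. This is precisely what the Fatou-type truncation argument above addresses, ensuring that the $q$-th moment does not get lost in the limit, after which Lemma~\ref{lem_lsc} closes the loop.
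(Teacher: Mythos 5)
Your proof is correct and follows essentially the same route as the paper's: a uniform $q$-th moment bound on the ball combined with the criterion of \cite[Proposition 2.2.3]{panaretos2020invitation} for relative compactness w.r.t.\,$\tau_p$, and closedness via the $\tau_p$-lower semicontinuity of $d_{W_q}$ from Lemma~\ref{lem_lsc}. Your explicit truncation argument showing that the $\tau_p$-limit retains a finite $q$-th moment (so that Lemma~\ref{lem_lsc} is indeed applicable) makes precise a step the paper leaves implicit, and you treat $p=0$ and $p\geq 1$ uniformly instead of splitting cases and citing external results for the moment bound.
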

\begin{proof}
The case $p=0$ follows, e.g., from \cite[Theorem 1]{yue2022linear}. Hence, w.l.o.g.\ we can assume $p>0$. First, to see that the set is relatively compact w.r.t.\,$\tau_p$, note that by \cite[Lemma 1]{yue2022linear} we have
\[
\sup_{\mu \in \mathcal{B}_{\varepsilon}^{(q)}(\widehat{\mu})} \int_\mathcal{X} \|x\|_\mathcal{X}^p \|x\|_\mathcal{X}^{q-p} \,\mu(\D x) 
=
 \sup_{\mu \in \mathcal{B}_{\varepsilon}^{(q)}(\widehat{\mu})} \int_\mathcal{X} \|x\|_\mathcal{X}^q \,\mu(\D x) < \infty.
\]
Hence, since $[0,\infty) \ni r \mapsto r^{q-p}$ is monotonically divergent as $q>p$, we get relative compactness w.r.t.\,$\tau_p$ from, e.g., \cite[Proposition 2.2.3]{panaretos2020invitation}.

To see that $\mathcal{B}_{\varepsilon}^{(q)}\left(\widehat{\mu}\right)$ is closed, we apply Lemma~\ref{lem_lsc}, and obtain that 
\begin{equation}\label{eq_map_proof_lem_1}
\begin{aligned}
\mathcal{M}_1^q(\mathcal{X}) &\rightarrow \R\\
\mu &\mapsto d_{W_q}(\widehat{\mu},\mu)
\end{aligned}
\end{equation}
is lower semicontinuous. Hence, $\mathcal{B}_{\varepsilon}^{(q)}\left(\widehat{\mu}\right)$ being a superlevel set of \eqref{eq_map_proof_lem_1} is closed.
\end{proof}
{Next, the following lemma in  Euclidean geometry will play a crucial role in order to prove that Assumption~\ref{asu_P}~(iii) holds both for the Wasserstein uncertainty case (see Theorem~\ref{thm_wasserstein_ambiguity}) and the parameter uncertainty case (see Theorem~\ref{thm_param_assumptions}).
\begin{lem}\label{lem_EuclideanGeometry}
Let $s\in \N$ and let $\mathbb{S}\subseteq \R^s$ be a non-empty convex subset.
For any arbitrary fixed $\lambda \in [0,1]$ define
\begin{equation}\label{eq_defn_v}
	\begin{aligned}
		v^{(\lambda)}_1: \mathbb{S} \times \mathbb{S} \times \mathbb{S} &\rightarrow \mathbb{S} \times \mathbb{S} \times \mathbb{S}\\
		(a,b,c) &\mapsto (a,b,\lambda a + (1-\lambda)c),\\
		v_2: \mathbb{S} \times \mathbb{S} \times \mathbb{S} &\rightarrow \mathbb{S} \\
		(a,b,c) &\mapsto \begin{cases} a &\text{ if } a=b=c,\\
			\frac{\|c-a\|}{\|c-a\|+\|b-a\|} \cdot  c+\left(1-\frac{\|c-a\|}{\|c-a\|+\|b-a\|}\right) \cdot b &\text{ else, }
		\end{cases}\\
		v^{(\lambda)}: \mathbb{S} \times \mathbb{S} \times \mathbb{S} &\rightarrow \mathbb{S}
		\\(a,b,c) &\mapsto v_2\big(v^{(\lambda)}_1(a,b,c)\big).
	\end{aligned}
\end{equation} Then for any $(a,b,c)\in \mathbb{S} \times \mathbb{S} \times \mathbb{S}$ the following holds
\begin{align}
	\|v^{(\lambda)}(a,b,c)-c\|&\leq \|b-a\| + \lambda \|c-a\|, \label{eq_v_ineq_1} \\
	\|v^{(\lambda)}(a,b,c)-b\|&\leq (1-\lambda) \|c-a\|. \label{eq_v_ineq_2}
\end{align}
\end{lem}
\begin{proof}[Proof of Lemma~\ref{lem_EuclideanGeometry}]
First,	note that by the convexity of $\mathbb{S}$, the above maps are well-defined.

	In the case $a=b=c$ the inequalities \eqref{eq_v_ineq_1} and \eqref{eq_v_ineq_2} are trivial, so without loss of generality ($a=b=c$) does not hold. Then, 
	 note that, by definition of $v_2$, we have
	\begin{equation}
		\label{eq:v_2-part1}
		\begin{split}
			\|v_2(a,b,c)-c\| & = \left\| -\left(1-\frac{\|c-a\|}{\|c-a\|+\|b-a\|} \right)\cdot  c+\left(1-\frac{\|c-a\|}{\|c-a\|+\|b-a\|}\right) \cdot b \right\|\\
			& = \left\| -\left(\frac{\|b-a\|}{\|c-a\|+\|b-a\|} \right)\cdot  c+\left(\frac{\|b-a\|}{\|c-a\|+\|b-a\|}\right) \cdot b \right\|\\
			& \leq  \|b-c\| \cdot \frac{\|b-a\|}{\|c-a\|+\|b-a\|} \\
			& \leq  \left(\|b-a\| +\|a-c\|\right) \cdot \frac{\|b-a\|}{\|c-a\|+\|b-a\|} = \|b-a\|.
		\end{split}
	\end{equation}
	Similarly, 
	note that 
	\begin{equation}
		\label{eq:v_2-part2}
		\begin{split}
			\|v_2(a,b,c)-b\| & = \left\|\frac{\|c-a\|}{\|c-a\|+\|b-a\|} \cdot  c-\frac{\|c-a\|}{\|c-a\|+\|b-a\|} \cdot b \right\|\\
			& \leq  \|b-c\| \cdot \frac{\|c-a\|}{\|c-a\|+\|b-a\|}  \leq  \|c-a\|.
		\end{split}
	\end{equation}
	Therefore, using \eqref{eq:v_2-part1}, we see that indeed
	\begin{equation*}
		\begin{split}
			\|v^{(\lambda)}(a,b,c)-c\| 
			& = 
			\|v_2(a,b,\lambda a + (1-\lambda)c)-c\|
			\\
			& \leq 
			\|v_2(a,b,\lambda a + (1-\lambda)c)-(\lambda a + (1-\lambda)c)\|
			+ 
			\| (\lambda a + (1-\lambda)c)-c\|
			\\
			& = 
			\| b-a \| + \lambda \|c-a \|.
		\end{split}
	\end{equation*}
	Moreover, by using \eqref{eq:v_2-part2}, we also obtain that indeed
	\begin{equation*}
		\begin{split}
			\|v^{(\lambda)}(a,b,c)-b\|
			&=
			\|v_2(a,b,\lambda a + (1-\lambda)c)-b\|\\	
			&\leq 
			\|(\lambda a + (1-\lambda)c)-a\|\\
			&= (1-\lambda) \|c-a \|.
		\end{split}
	\end{equation*}
	\end{proof}
}
\begin{proof}[Proof of Theorem~\ref{thm_wasserstein_ambiguity}]
Let $t\in \{0,1,\dots,T\}$ be fixed. We verify that the conditions (i)-(iv) of Assumption~\ref{asu_P} are fulfilled.
\begin{itemize}
\item[(i)] The ambiguity set $\mathcal{P}_t(\omega^t)$ contains, by definition, for all $\omega^t \in \Omega^t$ the reference measure $\widehat{\PP}_t(\omega^t)$ and is hence non-empty. Moreover, by, e.g., \cite[Lemma 1]{yue2022linear} we have $\mathcal{P}_t(\omega^t) \subseteq  \mathcal{M}_1^q(\Omloc)\subseteq \mathcal{M}_1^{\max\{1,p\}}(\Omloc)$ for all $\omega^t \in \Omega^t$ as $p<q$.\\
The compactness of $\mathcal{P}_t(\omega^t)$ w.r.t.\,$\tau_p$ follows from Lemma~\ref{lem_wasserstein_compact} as $p<q$.\\
{To show the upper hemicontinuity we mainly follow the lines of the proof of \cite[Proposition 3.1]{neufeld2023mdp}. Therefore, the goal is to apply the characterization of upper hemicontinuity provided in \cite[Theorem 17.20]{Aliprantis}. Let $({\omega^t}^{(n)})_{n\in \N} \subseteq \Omega^t$ such that ${\omega^t}^{(n)} \rightarrow {\omega^t}\in \Omega^t$ for $n \rightarrow \infty$. Further, consider a sequence $(\PP^{(n)})_{n \in \N}$ such that $\PP^{(n)} \in  \mathcal{B}^{(q)}_{\varepsilon_t({\omega^t}^{(n)})}\left(\widehat{\PP}_t({\omega^t}^{(n)})\right)=\mathcal{P}_t({\omega^t}^{(n)})$ for all $n \in \N$, i.e., we have $\left({\omega^t}^{(n)},\PP^{(n)}\right)_{n \in \N} \subseteq \operatorname{Gr} \mathcal{P}_t$, where $\operatorname{Gr} \mathcal{P}_t$ denotes the \emph{graph} of $\mathcal{P}_t$.

Let $(\delta_n)_{n \in \N} \subseteq (0,1)$ with $\lim_{n \rightarrow \infty} \delta_n = 0$. Note that, since $\Omega^t \ni \omega^t \mapsto \widehat{\PP}_t(\omega^t)$ is, by assumption, $L_{\widehat{\mathbb{P}},t}$-Lipschitz continuous we have $$\lim_{n \rightarrow \infty} d_{W_q}\left(\widehat{\PP}_t(\omega^t), \widehat{\PP}_t({\omega^t}^{(n)})\right) \leq \lim_{n \rightarrow \infty} L_{\widehat{\mathbb{P}},t} \cdot \| \omega^t- {\omega^t}^{(n)} \|=0.$$ Hence, there exists a subsequence $\left(\widehat{\PP}_t({\omega^t}^{(n_k)})\right)_{k \in \N}$ such that 
\begin{equation}\label{eq_proof_uhc_1}
d_{W_q}\left(\widehat{\PP}_t({\omega^t}), \widehat{\PP}_t({\omega^t}^{(n_k)})\right) \leq \delta_k \cdot 
\overline{\varepsilon}_t,
\quad \text{ for all } k \in \N.
\end{equation}
This implies for each $\PP^{(n_k)}$, $k \in \N$, that 
\begin{align*}
d_{W_q}\left(\widehat{\PP}_t({\omega^t}), \PP^{(n_k)}\right)
&\leq d_{W_q}\left(\widehat{\PP}_t({\omega^t}), \widehat{\PP}_t({\omega^t}^{(n_k)})\right)+d_{W_q}\left( \widehat{\PP}_t({\omega^t}^{(n_k)})), \PP^{(n_k)}\right)\\
& \leq \delta_k \cdot \overline{\varepsilon}_t 
 +\varepsilon_t({\omega^t}^{(n_k)}) \\
&\leq 2 \overline{\varepsilon}_t.
\end{align*}
Hence, $\PP^{(n_k)} \in \mathcal{B}_{2\overline{\varepsilon}_t}^{(q)}(\widehat{\PP}_t({\omega^t}))$ for all $k \in \N$. According to Lemma~\ref{lem_wasserstein_compact}, $\mathcal{B}_{2\overline{\varepsilon}_t}^{(q)}(\widehat{\PP}_t({\omega^t}))$ is compact in $\tau_p$, implying the existence of a subsequence $(\PP^{(n_{k_\ell})})_{\ell \in \N}$ such that $\PP^{(n_{k_\ell})} \xrightarrow{\tau_p} \PP$ as $\ell \rightarrow \infty$ for some $\PP \in  \mathcal{B}_{2\overline{\varepsilon}_t}^{(q)}(\widehat{\PP}_t({\omega^t}))$. In particular, since by assumption $\widehat{\PP}_t({\omega^t})$ possesses finite $q$-th moments, $\PP$ has also finite $q$-th moments, see \cite[Lemma 1]{yue2022linear}. It remains to prove that $\PP \in \mathcal{B}^{(q)}_{\varepsilon_t(\omega^t)}(\widehat{\PP}_t({\omega^t}))$.
{
To that end, note that since $\mathcal{M}_1^q(\Omloc)\ni\mu \mapsto d_{W_q}(\widehat{\PP}_t(\omega^t),\mu)$ is lower semicontinuous in $\tau_p$, see Lemma~\ref{lem_lsc}, and $\PP^{(n_{k_\ell})} \xrightarrow{\tau_p} \PP$, we obtain from
\eqref{eq_proof_uhc_1}, that $\PP^{(n_{k_\ell})} \in  \mathcal{B}^{(q)}_{\varepsilon_t({\omega^t}^{(n_{k_\ell})})}\left(\widehat{\PP}_t({\omega^t}^{(n_{k_\ell})})\right)$,
 and the continuity of $\varepsilon_t:\Omega^t \to [0,\overline{\varepsilon}_t]$ that 
\begin{align*}
	d_{W_q}\left(\widehat{\PP}_t(\omega^t),\PP\right) 
	&\leq
	\liminf_{\ell \rightarrow \infty} d_{W_q}\left(\widehat{\PP}_t(\omega^t), \PP^{({n_k}_\ell)}\right) \\
		&\leq
	\liminf_{\ell \rightarrow \infty} \Big[ d_{W_q}\left(\widehat{\PP}_t(\omega^t), \widehat{\PP}_t({\omega^t}^{({n_k}_\ell)})\right) 
	+
d_{W_q}\left(\widehat{\PP}_t({\omega^t}^{({n_k}_\ell)}), \PP^{({n_k}_\ell)}\right)
	\Big] \\
	&\leq 
	\liminf_{\ell \rightarrow \infty} \Big(\delta_{k_{\ell}}\cdot \overline{\varepsilon}_t
	+ \varepsilon_t({\omega^t}^{({n_k}_\ell)})\Big)\\
	&=\varepsilon_t(\omega^t),
\end{align*}
and hence indeed $\PP \in \mathcal{B}_{\varepsilon_t(\omega^t)}^{(q)}(\widehat{\PP}_t(\omega^t))$.}
The assertion that $\mathcal{P}$ is upper hemicontinuous follows now with the characterization of upper hemicontinuity provided in ~\cite[Theorem 17.20]{Aliprantis}.

Finally, the lower hemicontinuity of $\mathcal{P}_t:\Omega^t \twoheadrightarrow (\mathcal{M}_1^p(\Omloc),\tau_p)$ will automatically follow once we have proven below that Assumption~\ref{asu_P}~(iii) holds, but where we show the stronger statement that \eqref{eq_condition_P} holds with respect to $d_{W_{\max\{1,p\}}}$ instead of $d_{W_1}$,
cf.\ Lemma~\ref{lem:Lipschitz-implies-LHC}.
}

\item[(ii)] 
{{Define the constant\footnote{We use here the notation $\omega_a^t=(\omega_{a,1},\dots,\omega_{a,t})\in \Omega^t$.}
		\begin{equation*}
 C_{\mathcal{P},t}:= \max\left\{2^{p-1}\bigg({\overline{\varepsilon}_t}
 + 
 \inf_{\omega_a^t \in \Omega^t}\bigg\{
 \bigg(\int_{\Omloc }\|z\|^p \widehat{\PP}_t(\omega_a^t) (\D z) \bigg)^{\nicefrac{1}{p}}
 +
 L_{\widehat{\mathbb{P}},t} \sum_{i=1}^t \|\omega_{a,i}\|
 \bigg\}\bigg)^p,
 2^{p-1}L_{\widehat{\mathbb{P}},t}^pt^{p-1},~ 1\right\} <\infty.
\end{equation*}
 For all  $\omega^t \in \Omega^t$ and $\PP \in \mathcal{P}_t(\omega^t)$ let $\pi_p(\D x, \D y) \in \Pi(\PP,\widehat{\PP}_t(\omega^t))$ be the optimal coupling of $\PP$ and $\widehat{\PP}_t(\omega^t)$ w.r.t.\ the $p$-Wasserstein distance $d_{W_p}$.
Then, by Minkowski's inequality we have
\begin{equation}\label{eq_prop_22_1}
\begin{aligned}
\bigg(\int_{\Omloc} \|x\|^p\PP( \D x)\bigg)^{\nicefrac{1}{p}} &= \bigg(\int_{\Omloc \times \Omloc} \| x \|^p \pi_p( \D  x, \D y)\bigg)^{\nicefrac{1}{p}} \\
&\leq 
\bigg(\int_{\Omloc \times \Omloc}\|x-y\|^p \pi_p (\D x, \D y)\bigg)^{\nicefrac{1}{p}}
+
\bigg(\int_{\Omloc}\|y\|^p \widehat{\PP}_t(\omega^t) (\D y)\bigg)^{\nicefrac{1}{p}}  \\
&= d_{W_p}(\PP, \widehat{\PP}_t(\omega^t)) 
+
\bigg(\int_{\Omloc}\|y\|^p \widehat{\PP}_t(\omega^t) (\D y)\bigg)^{\nicefrac{1}{p}} \\
&\leq 
{\overline{\varepsilon}_t}
+ 
\bigg(\int_{\Omloc}\|y\|^p \widehat{\PP}_t(\omega^t) (\D y)\bigg)^{\nicefrac{1}{p}}.
\end{aligned}
\end{equation}
Now 
for any arbitrary $\omega_a^t \in \Omega^t$
let $\pi_{\widehat{\PP}(\omega_a^t)}\in \Pi\left(\widehat{\PP}_t(\omega^t),\widehat{\PP}_t(\omega_a^t)\right)$ be the optimal coupling of $\widehat{\PP}_t(\omega^t)$ and $\widehat{\PP}_t(\omega_a^t)$ w.r.t.\,the $p$-Wasserstein distance $d_{W_p}$. Then, by \eqref{eq_prop_22_1}, Minkowski's inequality, the Lipschitz continuity of $\Omega^t\ni\omega^t\mapsto \widehat{\PP}_t(\omega^t)\in (\mathcal{M}_1^q(\Omloc),\tau_q)$, and that $p<q$
 we have
\begin{equation}
	\begin{split}
\label{eq_prop_22_1+}
\bigg(\int_{\Omloc} \|x\|^p\PP( \D x)\bigg)^{\nicefrac{1}{p}}
&\leq 
{\overline{\varepsilon}_t}
+ 
\bigg(\int_{\Omloc \times \Omloc }\|y\|^p \pi_{\widehat{\PP}_t(\omega_a^t)} (\D y, \D  z)\bigg)^{\nicefrac{1}{p}}\\
&\leq 
{\overline{\varepsilon}_t}
+ 
\bigg(\int_{\Omloc }\|z\|^p \widehat{\PP}_t(\omega_a^t) (\D z) \bigg)^{\nicefrac{1}{p}}
+
\bigg(\int_{\Omloc \times \Omloc }\|y-z\|^p \pi_{\widehat{\PP}_t(\omega_a^t)} (\D x, \D  y)\bigg)^{\nicefrac{1}{p}} \\
&=
{\overline{\varepsilon}_t}
+ 
\bigg(\int_{\Omloc }\|z\|^p \widehat{\PP}_t(\omega_a^t) (\D z) \bigg)^{\nicefrac{1}{p}}
+
d_{W_p}(\widehat{\PP}_t(\omega^t),\widehat{\PP}_t(\omega_a^t)) \\
&\leq
{\overline{\varepsilon}_t}
+ 
\bigg(\int_{\Omloc }\|z\|^p \widehat{\PP}_t(\omega_a^t) (\D z) \bigg)^{\nicefrac{1}{p}}
+
d_{W_q}(\widehat{\PP}_t(\omega^t),\widehat{\PP}_t(\omega_a^t)) \\
&\leq
{\overline{\varepsilon}_t}
 +
\bigg(\int_{\Omloc }\|z\|^p \widehat{\PP}_t(\omega_a^t) (\D z) \bigg)^{\nicefrac{1}{p}}
+
L_{\widehat{\mathbb{P}},t} \sum_{i=1}^t \|\omega_i-\omega_{a,i}\| \\
&\leq
{\overline{\varepsilon}_t}
+
\bigg(\int_{\Omloc }\|z\|^p \widehat{\PP}_t(\omega_a^t) (\D z) \bigg)^{\nicefrac{1}{p}}
+
L_{\widehat{\mathbb{P}},t} \sum_{i=1}^t \|\omega_{a,i}\|
+
L_{\widehat{\mathbb{P}},t} \sum_{i=1}^t \|\omega_i\|.
\end{split}
\end{equation}
Since  $\omega_a^t \in \Omega^t$ was arbitrary, we conclude that
\begin{equation}
	\label{eq_prop_22_1++}
\begin{split}
\bigg(\int_{\Omloc} \|x\|^p\PP( \D x)\bigg)^{\nicefrac{1}{p}}
&\leq
{\overline{\varepsilon}_t}
+ 
\inf_{\omega_a^t \in \Omega^t}\bigg\{
\bigg(\int_{\Omloc }\|z\|^p \widehat{\PP}_t(\omega_a^t) (\D z) \bigg)^{\nicefrac{1}{p}}
+
L_{\widehat{\mathbb{P}},t} \sum_{i=1}^t \|\omega_{a,i}\|
\bigg\}
+
L_{\widehat{\mathbb{P}},t} \sum_{i=1}^t \|\omega_i\|.
\end{split}
\end{equation}
This implies that

\begin{equation}
	\label{eq_prop_22_1+++}
\begin{split}
	\int_{\Omloc} \|x\|^p\PP( \D x)
	&\leq
	2^{p-1}\bigg({\overline{\varepsilon}_t}
	+ 
	\inf_{\omega_a^t \in \Omega^t}\bigg\{
	\bigg(\int_{\Omloc }\|z\|^p \widehat{\PP}_t(\omega_a^t) (\D z) \bigg)^{\nicefrac{1}{p}}
	+
	L_{\widehat{\mathbb{P}},t} \sum_{i=1}^t \|\omega_{a,i}\|
	\bigg\}\bigg)^p
	+
	2^{p-1}L_{\widehat{\mathbb{P}},t}^p\Big( \sum_{i=1}^t \|\omega_i\|\Big)^p\\
		&\leq
	2^{p-1}\bigg({\overline{\varepsilon}_t}
	+ 
	\inf_{\omega_a^t \in \Omega^t}\bigg\{
	\bigg(\int_{\Omloc }\|z\|^p \widehat{\PP}_t(\omega_a^t) (\D z) \bigg)^{\nicefrac{1}{p}}
	+
	L_{\widehat{\mathbb{P}},t} \sum_{i=1}^t \|\omega_{a,i}\|
	\bigg\}\bigg)^p
	+
	2^{p-1}L_{\widehat{\mathbb{P}},t}^pt^{p-1}\sum_{i=1}^t \|\omega_i\|^p\\
&\leq C_{\mathcal{P},t} \bigg(1+\sum_{i=1}^t \|\omega_i\|^p\bigg).	
\end{split}
\end{equation}
}}
\item[(iii)]

Let $\omega^t, \widetilde{\omega}^t \in \Omega^t$ and let $\PP_1 \in \mathcal{P}_t(\omega^t)=\mathcal{B}_{\varepsilon_t(\omega^t)}^{(q)} \left(\widehat{\PP}_t(\omega^t)\right)$. Without loss of generality assume $\PP_1 \not\in \mathcal{P}_t(\widetilde{\omega}^t)$. 

{
To construct $\PP_2 \in \mathcal{M}_1(\Omloc)$, 
in the following, we denote $\widehat{\PP}_1:= \widehat{\PP}_t(\omega^t)$, $\widehat{\PP}_2:= \widehat{\PP}_t(\widetilde{\omega}^t)$ as well as 
$\widehat{\Omloc}_{,1}:= \Omloc$, $\widehat{\Omloc}_{,2}:= \Omloc$, ${\Omloc}_{,1}:= \Omloc$ , ${\Omloc}_{,2}:= \Omloc$, i.e., we simply consider copies of $\Omloc$. Then, we consider a probability measure $\gamma:= \gamma(\D a, \D b, \D c) \in \mathcal{M}_1 \left(\widehat{\Omloc}_{,1} \times \widehat{\Omloc}_{,2} \times {\Omloc}_{,1} \right)$ which fulfills the following marginal constraints. 
\begin{enumerate}
	\item The marginal of $\gamma$ on $\widehat{\Omloc}_{,1}$ is $\widehat{\PP}_1$.
	\item The marginal of $\gamma$ on $\widehat{\Omloc}_{,2}$ is $\widehat{\PP}_2$.
	\item The marginal of $\gamma$ on ${\Omloc}_{,1}$ is ${\PP}_1$.
	\item The marginal of $\gamma$ on $\widehat{\Omloc}_{,1} \times \widehat{\Omloc}_{,2}$ minimizes the $q$-Wasserstein distance between $\widehat{\PP}_1$ and $\widehat{\PP}_2$.
	\item The marginal of $\gamma$ on $\widehat{\Omloc}_{,1} \times {\Omloc}_{,1}$ minimizes the $q$-Wasserstein distance between $\widehat{\PP}_1$ and ${\PP}_1$.
\end{enumerate}
The existence of such a probability measure follows by the \emph{Gluing Lemma}, see, e.g. \cite[Lemma 7.6.]{villani2021topics}.
We then choose $\lambda \in [0,1]$ by
\begin{equation}
	\label{eq:lambda}
\lambda:=\begin{cases}
	\frac{\max\{\varepsilon_t(\omega^t)-\varepsilon_t(\widetilde{\omega}^t),0\}}{\varepsilon_t(\omega^t)}   &  \mbox{ if } \varepsilon_t(\omega^t) \neq 0,\\
	0							& \mbox{ else,}
\end{cases}
\end{equation}
 use the map $v^{(\lambda)}:\Omloc \times \Omloc \times \Omloc \rightarrow \Omloc$ introduced in \eqref{eq_defn_v} in Lemma~\ref{lem_EuclideanGeometry} (with $\mathbb{S}\leftarrow \Omloc$ in the notation of Lemma~\ref{lem_EuclideanGeometry}) with respect to this $\lambda \in [0,1]$ in \eqref{eq:lambda} to define
\[
\widetilde{\gamma}:= \gamma \circ \left(\operatorname{id}_{{\Omloc}_{,1} \times {\Omloc}_{,2} \times \widehat{\Omloc}_{,1} },~v^{(\lambda)}\right)^{-1} \in \mathcal{M}_1\left(\widehat{\Omloc}_{,1} \times \widehat{\Omloc}_{,2} \times {\Omloc}_{,1} \times {\Omloc}_{,2} \right).
\]
Then, we set $\PP_2 \in \mathcal{M}_1( {\Omloc}_{,2}) = \mathcal{M}_1(\Omloc)$  to be defined as the marginal of $\widetilde{\gamma}$ on ${\Omloc}_{,2} $.

Next, we claim $\PP_2 \in \mathcal{B}_{\varepsilon_t(\widetilde{\omega}^t)}^{(q)} \left(\widehat{\PP}_2\right)$. 
Note that, by construction, the second and fourth marginals  of $\widetilde{\gamma}$ are $\widehat{\PP}_2$ and $\PP_2$, respectively.
Hence, by \eqref{eq_v_ineq_2}, by the definition of $\gamma$, and as $\PP_1 \in \mathcal{B}_{\varepsilon_t(\omega^t)}^{(q)} \left(\widehat{\PP}_1\right)$ we have that 
\begin{equation}
	\label{eq:radius-P2}
	\begin{split}
\operatorname{d}_{W_q} \left(\widehat{\PP}_2, \PP_2 \right) & =\inf_{\pi \in \Pi(\widehat{\PP}_2, \PP_2)}  \bigg(\int_{\Omloc \times \Omloc } \|d-b\|^q \,\pi(\D d, \D b)\bigg)^{\nicefrac{1}{q}} \\
&\leq \bigg(\int_{\Omloc \times \Omloc \times \Omloc \times \Omloc} \| d-b\|^q \,\widetilde{\gamma}(\D a, \D b, \D c, \D d)\bigg)^{\nicefrac{1}{q}}\\
&= \bigg(\int_{\Omloc \times \Omloc \times \Omloc} \|v^{(\lambda)}(a,b,c)-b\|^q \,{\gamma}(\D a, \D b, \D c)\bigg)^{\nicefrac{1}{q}}\\
&\leq  \bigg(\int_{\Omloc \times \Omloc \times \Omloc} (1-\lambda)^q\|c-a\|^q \,{\gamma}(\D a, \D b, \D c)\bigg)^{\nicefrac{1}{q}}\\
& = (1-\lambda)\operatorname{d}_{W_q} \left(\widehat{\PP}_1, \PP_1 \right).
\end{split}
\end{equation}
Moreover, if $\varepsilon_t(\omega^t)=0$, then $\PP_1= \widehat{\PP}_1$, which hence by the above inequality \eqref{eq:radius-P2} ensures that in this case $\PP_2=\widehat{\PP}_2 \in
\mathcal{B}_{\varepsilon_t(\widetilde{\omega}^t)}^{(q)} \left(\widehat{\PP}_2\right)$. Now if $\varepsilon_t(\omega^t)\neq 0$, then \eqref{eq:radius-P2} assures that indeed
\begin{align*}
	\operatorname{d}_{W_q} \left(\widehat{\PP}_2, \PP_2 \right)
	\leq  (1-\lambda)\operatorname{d}_{W_q} \left(\widehat{\PP}_1, \PP_1 \right)
\leq \big(1-\tfrac{\max\{\varepsilon_t(\omega^t)-\varepsilon_t(\widetilde{\omega}^t),0\}}{\varepsilon_t(\omega^t)}\big) \varepsilon_t(\omega^t)
\leq \varepsilon_t(\widetilde{\omega}^t).
\end{align*}
Next, we claim that 
\begin{equation}\label{eq_wasserstein_smaller}
\operatorname{d}_{W_q} \left(\PP_1, \PP_2 \right)
 \leq 
 	\operatorname{d}_{W_q} \left(\widehat{\PP}_1, \widehat{\PP}_2 \right)
 	+\lambda \operatorname{d}_{W_q} \left(\widehat{\PP}_1, \PP_1 \right).
\end{equation}
Indeed, by construction, the third and fourth marginals  of $\widetilde{\gamma}$ are $\PP_1$ and $\PP_2$, respectively.
Hence, by \eqref{eq_v_ineq_1}, Minkowski's inequality, and by the definition of $\gamma$ we have
\begin{align*}
\operatorname{d}_{W_q} \left({\PP}_1, \PP_2 \right) 
&\leq  
\bigg(\int_{\Omloc \times \Omloc \times \Omloc \times \Omloc} \| c-d\|^q \,\widetilde{\gamma}(\D a, \D b, \D c, \D d)\bigg)^{\nicefrac{1}{q}}\\
&= 
 \bigg(\int_{\Omloc \times \Omloc \times \Omloc} \| c-v^{(\lambda)}(a,b,c)\|^q \,{\gamma}(\D a, \D b, \D c)\bigg)^{\nicefrac{1}{q}} \\
&\leq 
\bigg( \int_{\Omloc \times \Omloc \times \Omloc} \big(\| b-a \| + \lambda \|c-a \|\big)^q {\gamma}(\D a, \D b, \D c)\bigg)^{\nicefrac{1}{q}}\\
&\leq 
\bigg( \int_{\Omloc \times \Omloc \times \Omloc} \| b-a \|^q \, {\gamma}(\D a, \D b, \D c)\bigg)^{\nicefrac{1}{q}}
	+
	\bigg( \int_{\Omloc \times \Omloc \times \Omloc} \lambda^q\|c-a \|^q {\gamma}(\D a, \D b, \D c)\bigg)^{\nicefrac{1}{q}}\\
& = \operatorname{d}_{W_q} \left(\widehat{\PP}_1, \widehat{\PP}_2 \right)
	+\lambda \operatorname{d}_{W_q} \left(\widehat{\PP}_1, \PP_1 \right).
\end{align*}
Finally, recall that  $\widehat{\PP}_1:= \widehat{\PP}_t(\omega^t)$, $\widehat{\PP}_2:= \widehat{\PP}_t(\widetilde{\omega}^t)$. If $\varepsilon_t(\omega^t)=0$, then $\lambda=0$ by \eqref{eq:lambda} and hence the assumption that
	$\operatorname{d}_{W_q}(\widehat{\PP}_t(\omega^t), \widehat{\PP}_t(\widetilde{\omega}^t)) \leq L_{\widehat{\mathbb{P}},t} \cdot \left( \sum_{i=1}^t \|\omega_i - \widetilde{\omega}_i \|  \right)$ implies \eqref{eq_condition_P}.  
If $\varepsilon_t(\omega^t)\neq 0$, then	the assumption that
 $|\varepsilon_t(\omega^t)-\varepsilon_t(\widetilde{\omega}^t)| \leq L_{\varepsilon,t} \cdot \left( \sum_{i=1}^t \|\omega_i - \widetilde{\omega}_i \|  \right)$  and the definition of $\lambda$ given in \eqref{eq:lambda}
 assure that
\begin{equation}\label{eq_final_argument_proof_wasserstein}
	\begin{split}
\operatorname{d}_{W_1}({\PP}_1,{\PP}_2) 
\leq
\operatorname{d}_{W_{\max\{1,p\}}}({\PP}_1,{\PP}_2)
&\leq   \operatorname{d}_{W_q}({\PP}_1,{\PP}_2)  \\
&\leq	\operatorname{d}_{W_q} \left(\widehat{\PP}_1, \widehat{\PP}_2 \right)
+\lambda \operatorname{d}_{W_q} \left(\widehat{\PP}_1, \PP_1 \right)\\
&\leq  L_{\widehat{\mathbb{P}},t} \cdot \left( \sum_{i=1}^t \|\omega_i - \widetilde{\omega}_i \| \right) 
+ \tfrac{\max\{\varepsilon_t(\omega^t)-\varepsilon_t(\widetilde{\omega}^t),0\}}{\varepsilon_t(\omega^t)} \cdot \varepsilon_t(\omega^t)\\
&\leq 
(L_{\widehat{\mathbb{P}},t} + L_{\varepsilon,t})  \cdot \left( \sum_{i=1}^t \|\omega_i - \widetilde{\omega}_i \| \right).
\end{split}
\end{equation}
This shows 
that Assumption~\ref{asu_P}~(iii) holds with $L_{\mathcal{P},t}:=L_{\widehat{\mathbb{P}},t} + L_{\varepsilon,t}\geq 0$, as well as by 
Lemma~\ref{lem:Lipschitz-implies-LHC}
that  $\mathcal{P}_t:\Omega^t \twoheadrightarrow (\mathcal{M}_1^p(\Omloc),\tau_p)$ is lower hemicontinuous.
}
\item[(iv)] The ambiguity set $\mathcal{P}_0$ contains, by definition, the reference measure $\widehat{\PP}_0$ and is hence non-empty. Moreover, by, e.g., \cite[Lemma 1]{yue2022linear} we have $\mathcal{P}_0 \subseteq  \mathcal{M}_1^q(\Omloc)\subseteq \mathcal{M}_1^{\max\{1,p\}}(\Omloc)$ since $q>p$.\\
The compactness of $\mathcal{P}_0$ w.r.t.\,$\tau_p$ follows from Lemma~\ref{lem_wasserstein_compact}. Let $\PP \in \mathcal{P}_0$ and denote by $\pi_p \in \Pi\left(\PP,\widehat{\PP}_0\right)$ the optimal coupling between $\PP$ and $\widehat{\PP}_0$ w.r.t.\ the $p$-Wasserstein distance $d_{W_p}$. {{Then, by Minkowski's inequality we have
\begin{equation}\label{eq_prop_22_part_4}
\begin{aligned}
\bigg(\int_{\Omloc} \|x\|^p\PP( \D x)\bigg)^{\nicefrac{1}{p}} &= \int_{\Omloc \times \Omloc} \| x \|^p \pi_p( \D  x, \D y) \\
&\leq 
\bigg(\int_{\Omloc \times \Omloc}\|x-y\|^p \pi_p (\D x, \D y)\bigg)^{\nicefrac{1}{p}}
+
\bigg(\int_{\Omloc}\|y\|^p \widehat{\PP}_0 (\D y)\bigg)^{\nicefrac{1}{p}}  \\
&= d_{W_p}(\PP, \widehat{\PP}_0)
+ 
\bigg(\int_{\Omloc}\|y\|^p \widehat{\PP}_0 (\D y)\bigg)^{\nicefrac{1}{p}}  \\
&\leq \varepsilon_0+ 
\bigg(\int_{\Omloc}\|y\|^p \widehat{\PP}_0 (\D y)\bigg)^{\nicefrac{1}{p}} < \infty.
\end{aligned}
\end{equation}
}}
\end{itemize}

\end{proof}
%
%
{
\subsubsection{Proofs of Section~\ref{sec_ambiguity_parametric}}
In this section, we provide the proof of Theorem~\ref{thm_param_assumptions} which provides a general setup for parametric uncertainty satisfying Assumption~\ref{asu_P}.
  Moreover, we provide the proofs of Proposition~\ref{prop_normal_family} and Proposition~\ref{prop_exponential_family} being concrete examples satisfying the setup of Theorem~\ref{thm_param_assumptions}.

We start with the following crucial lemma which states that set-valued maps in Euclidean spaces defined as balls around a reference element, where both the reference element and the radius depend (Lipschitz-) continuously on the input, are compact and continuous, as well as satisfy a stability/continuity condition like \eqref{eq_condition_P} and \eqref{eq_condition_A}. 
%
\begin{lem}\label{lem_parameter-ball-correspondence}
Let $\mathcal{X}$ be a metric space, let $s\in \N$, and let $\mathbb{S}\subseteq \R^s$ be non-empty, closed, and convex. Moreover, let $\widehat{\theta}: \mathcal{X} \to \mathbb{S}$ be a continuous map  and let
$\varepsilon: \mathcal{X} \to [0,\overline{\varepsilon}]$ be a continuous map with respect to some fixed $\overline{\varepsilon}>0$. Define the correspondence $\Gamma:\mathcal{X}\twoheadrightarrow \mathbb{S}$ by
\begin{equation*}
	\mathcal{X} \ni x \twoheadrightarrow  
	\Gamma(x):=\Big\{\theta \in \mathbb{S}\,\Big|\, \|\theta -\widehat{\theta}(x)\|\leq \varepsilon(x) \Big\}.
\end{equation*}
Then the following holds.
\begin{enumerate}
	\item[(i)]  $\Gamma:\mathcal{X}\twoheadrightarrow \mathbb{S}$ is non-empty, compact-valued, and continuous.
\end{enumerate}
If, in addition to the above assumptions, both maps $\widehat{\theta}: \mathcal{X} \to \mathbb{S}$   and 
$\varepsilon: \mathcal{X} \to [0,\overline{\varepsilon}]$ are Lipschitz-continuous with 
 Lipschitz-constants $L_{\widehat{\theta}}\geq 0$ and $L_{\varepsilon}\geq 0$, respectively, then the following holds additionally.
\begin{enumerate}
	\item[(ii)] 
	For all $x, \widetilde{x} \in \mathcal{X}$ and for all $\theta \in \Gamma(x)$ there exists some $\widetilde{\theta} \in \Gamma(\widetilde{x})$ such that
	\begin{equation}
		\label{eq_condition_Gamma}
		\Vert \theta -\widetilde{\theta}\Vert  \leq 
	(L_{\widehat{\theta}}+L_{\varepsilon})	\|x-\widetilde{x}\|.
	\end{equation}
\end{enumerate}
\end{lem}
\begin{proof}[Proof of Lemma~\ref{lem_parameter-ball-correspondence}]
To see that Lemma~\ref{lem_parameter-ball-correspondence}~(i) holds, note 
	first that non-emptiness of $\Gamma(x)$ for each $x \in \mathcal{X}$ follows by definition. Now, to see that 
	 $\Gamma:\mathcal{X}\twoheadrightarrow \mathbb{S}$ is compact-valued and upper-hemicontinuous, we aim to apply \cite[Theorem~17.20]{Aliprantis}. To that end, let $\big((x^n,\theta^n)\big)_{n \in \N}\subseteq \operatorname{Gr} \Gamma$, where $\operatorname{Gr} \Gamma:=\big\{(x,\theta) \in \mathcal{X}\times \mathbb{S}\,\big|\, \theta \in \Gamma(x)\}$ denotes the \emph{graph} of $\Gamma$, and let $x \in \mathcal{X}$ such that $\lim_{n \to \infty} d_{\mathcal{X}}(x^n,x)=0$, where $d_{\mathcal{X}}(\cdot,\cdot)$ denotes the metric on $\mathcal{X}$.
	 Note that for every $n \in \N$
	 \begin{equation*}
	 	\Vert \theta^n -\widehat{\theta}(x)\Vert 
	 	\leq 
	 	\Vert \theta^n -\widehat{\theta}(x^n)\Vert  + \Vert \widehat{\theta}(x^n) -\widehat{\theta}(x)\Vert 
	 	\leq \varepsilon(x^n) + \Vert \widehat{\theta}(x^n) -\widehat{\theta}(x)\Vert.
	 \end{equation*}
	 Since by assumption both $\varepsilon: \mathcal{X} \to [0,\overline{\varepsilon}]$ and $\widehat{\theta}: \mathcal{X} \to \mathbb{S}$ are continuous, we 
	 see that for every $n \in \N$ sufficiently large, we have that
	 \begin{equation*}
	 	\Vert \theta^n -\widehat{\theta}(x)\Vert 
	 	\leq 2\overline{\varepsilon}.
	 \end{equation*}
By applying the Bolzano–Weierstrass theorem, we hence derive that there is a subsequence  $(\theta^{n_k})_{k\in \N}$ which converges to some $\theta \in \mathbb{S}$ since $\mathbb{S}\subseteq \R^s$ is closed. Moreover, again by the continuity of $\varepsilon: \mathcal{X} \to [0,\overline{\varepsilon}]$ and $\widehat{\theta}: \mathcal{X} \to \mathbb{S}$, we deduce that
\begin{equation*}
	\Vert \theta- \widehat{\theta}(x) \Vert 
	=
	 \lim_{k \to \infty} \Vert \theta^{n_k}- \widehat{\theta}(x^{n_k}) \Vert 
	 \leq
	 \lim_{k \to \infty} \varepsilon(x^{n_k})
	 = \varepsilon(x).
\end{equation*}
By applying \cite[Theorem~17.20]{Aliprantis}, we conclude that  $\Gamma:\mathcal{X}\twoheadrightarrow \R^s$ is indeed compact-valued and upper hemicontinuous.

To prove the lower hemicontinuity of  $\Gamma:\mathcal{X}\twoheadrightarrow \R^s$, we aim to apply \cite[Theorem~17.21]{Aliprantis}.
To that end, define the set-valued map $\accentset{\circ}{\Gamma}:\mathcal{X} \twoheadrightarrow \mathbb{S}$ by setting for every $x \in \mathcal{X}$
\begin{equation*}
	\accentset{\circ}{\Gamma}(x):=
	\begin{cases}
		\Big\{\theta \in \mathbb{S}\,\Big|\, \|\theta -\widehat{\theta}(x)\|< \varepsilon(x) \Big\}   & \mbox{ if } \ \varepsilon(x)>0,\\
		\widehat{\theta}(x)    &  \mbox{ if } \ \varepsilon(x)=0.
	\end{cases} 
\end{equation*}
Note that by construction, we have for each $x \in \mathcal{X}$ that the closure of $\accentset{\circ}{\Gamma}(x)$ is equal to $\Gamma(x)$. Therefore, by \cite[Lemma~17.22]{Aliprantis}, it suffices to prove that $\accentset{\circ}{\Gamma}:\mathcal{X} \twoheadrightarrow \mathbb{S}$ is lower hemicontinuous in order to prove the lower hemicontinuity of $\Gamma:\mathcal{X}\twoheadrightarrow \R^s$.
 
To that end, let $(x^n)_{n \in \N}\subseteq \mathcal{X}$ and $x\in\mathcal{X}$ such that  $\lim_{n \to \infty} d_{\mathcal{X}}(x^n,x)=0$ and let $\theta \in \accentset{\circ}{\Gamma}(x)$.

If $\varepsilon(x)= 0$, then $\theta = \widehat{\theta}(x)$. Hence in this case, the continuity of $\widehat{\theta}: \mathcal{X} \to \mathbb{S}$ implies that the sequence  $\widehat{\theta}(x^n)\in \accentset{\circ}{\Gamma}(x^n)$, $n \in\N$, converges to $\widehat{\theta}(x)=\theta$.

If $\varepsilon(x)\neq  0$, then $\accentset{\circ}{\Gamma}(x)=\Big\{\theta \in \mathbb{S}\,\Big|\, \|\theta -\widehat{\theta}(x)\|< \varepsilon(x) \Big\}$.
 In this case, $\accentset{\circ}{\Gamma}(x)\subseteq \R^s$ is open, hence  there exists some $0< \delta< \varepsilon(x)$ such that $\|\theta -\widehat{\theta}(x)\|< \varepsilon(x)-\delta$.
  We define for each $n \in \N$ 
\[
\theta^{n}:=
\begin{cases}
	\widehat{\theta}(x^n),&\text{ if } \
	\|\widehat{\theta}(x^n)-\widehat{\theta}(x)\Vert \geq \tfrac{\delta}{2}, \\
	\theta, &\text{else}.
\end{cases}
\]
Now, by continuity of $\varepsilon: \mathcal{X} \to [0,\overline{\varepsilon}]$, there exists an $N_1\in \N$ such that for all $n \geq N_1$ we have  $|\varepsilon(x^n)-\varepsilon(x)|<\tfrac{\delta}{2}$.
We claim that $\theta^{n} \in  \accentset{\circ}{\Gamma}(x^n)$ for every $n \geq N_1$. Indeed, note first that if $n \geq N_1$ such that	$\|\widehat{\theta}(x^n)-\widehat{\theta}(x)\Vert \geq \tfrac{\delta}{2}$, then  $\theta^{n}=	\widehat{\theta}(x^n) \in \accentset{\circ}{\Gamma}(x^n)$. Moreover, if $n \geq N_1$ such that $\|\widehat{\theta}(x^n)-\widehat{\theta}(x)\Vert < \tfrac{\delta}{2}$ 
then  $\theta^{n}=\theta$, and hence by the triangle inequality
\begin{equation*}
\Vert \theta^n -\widehat{\theta}(x^n)\Vert
= \Vert \theta -\widehat{\theta}(x^n)\Vert
\leq  \Vert \theta -\widehat{\theta}(x)\Vert + \Vert \widehat{\theta}(x) -\widehat{\theta}(x^n)\Vert
 < (\varepsilon(x)-\delta)+\tfrac{\delta}{2}
 \leq (\varepsilon(x^n)-\tfrac{\delta}{2})+\tfrac{\delta}{2}
 =\varepsilon(x^n),
\end{equation*}
which demonstrates that $\theta^n \in \accentset{\circ}{\Gamma}(x^n)$.
Moreover, by the continuity of the map $\widehat{\theta}: \mathcal{X} \to \mathbb{S}$, there exists some $N\geq N_1$ such that for all $n \geq N$ we have 
$\|\widehat{\theta}(x^n)-\widehat{\theta}(x)\Vert < \tfrac{\delta}{2}$.
Therefore, by construction, we have  for all $n \geq N$ that $\theta^n=\theta$ and thus, in particular $\theta^n \to \theta$ for $n \rightarrow \infty$. The 
 lower hemicontinuity of $\accentset{\circ}{\Gamma}$  (and hence also of $\Gamma$) now follows from ~\cite[Theorem 17.21]{Aliprantis}.
 
Next we aim to prove that Lemma~\ref{lem_parameter-ball-correspondence}~(ii) holds. 
To that end, let $x,\widetilde{x}\in \mathcal{X}$ and let $\theta \in \Gamma(x)$. 
For the ease of notation we set 
$a : = \widehat{\theta}(x) \in \mathcal{X}$,
$b:= \widehat{\theta}(\widetilde{x}) \in \mathcal{X}$,
$c:= \theta \in \mathcal{X}$ and define
\begin{equation}
	\label{eq:lambda-param}
	\lambda:=\begin{cases}
		\frac{\max\{\varepsilon(x)-\varepsilon(\widetilde{x}),0\}}{\varepsilon(x)}   &  \mbox{ if } \varepsilon(x) \neq 0,\\
		0							& \mbox{ else.}
	\end{cases}
\end{equation}
Moreover, we let $v^{(\lambda)}:\mathbb{S} \times \mathbb{S} \times \mathbb{S} \to \mathbb{S}$ be the
map defined in \eqref{eq_defn_v} in Lemma~\ref{lem_EuclideanGeometry} with respect to that chosen $\lambda \in [0,1]$, 
and then set
\begin{equation*}
	\widetilde{\theta}:= v^{(\lambda)}(a,b,c)
	=v^{(\lambda)}\big(\widehat{\theta}(x),\widehat{\theta}(\widetilde{x}),\theta\big).
\end{equation*}
By \eqref{eq_v_ineq_2} in Lemma~\ref{lem_EuclideanGeometry} and the choice of $\lambda$ in \eqref{eq:lambda-param}, we have that
\begin{equation}
	\label{eq_v_ineq_2-param}
	\begin{split}
		\|\widetilde{\theta}-\widehat{\theta}(\widetilde{x})\|
		=  
		\big\|v^{(\lambda)}\big(\widehat{\theta}(x),\widehat{\theta}(\widetilde{x}),\theta\big)-\widehat{\theta}(\widetilde{x}) \big\|
		\leq (1-\lambda) \big\|\theta-\widehat{\theta}(x) \big\|
		\leq \varepsilon(\widetilde{x}).
	\end{split}
\end{equation}
%
Moreover, by \eqref{eq_v_ineq_1} in Lemma~\ref{lem_EuclideanGeometry}, 
the choice of $\lambda$ in \eqref{eq:lambda-param}, and the Lipschitz-continuity of  both maps $\widehat{\theta}: \mathcal{X} \to \mathbb{S}$   and 
$\varepsilon: \mathcal{X} \to [0,\overline{\varepsilon}]$, 
we see that
\begin{equation}
	\label{eq_v_ineq_1-param}
	\begin{split}
		\| \widetilde{\theta} -\theta\| 
		&\leq
		\| \widehat{\theta}(\widetilde{x})- \widehat{\theta}(x)\| 
		+ \lambda \|\theta -\widehat{\theta}(x)\| \\
		&\leq
		\| \widehat{\theta}(\widetilde{x})- \widehat{\theta}(x)\| 
		+ \lambda \varepsilon(x)\\
		&	\leq
		\| \widehat{\theta}(\widetilde{x})- \widehat{\theta}(x)\| 
		+ |\varepsilon(\widetilde{x})-\varepsilon(x)|\\
		&	\leq
		(L_{\widehat{\theta}} + L_{\varepsilon})\| \widetilde{x}-x\|.
	\end{split}
\end{equation}
\end{proof}
}

\begin{proof}[Proof of Theorem~\ref{thm_param_assumptions}]
We verify all four properties of Assumption~\ref{asu_P}.\\
To see that Assumption~\ref{asu_P}~(iv) holds, note that we can w.l.o.g. assume that $p \geq 1$, as for $p=0$, Assumption~\ref{asu_P}~(iv) holds trivially. 
{{For the case $p\geq 1$ let $\PP \in \mathcal{P}_0$. Then, by definition of $\mathcal{P}_{0}$ in \ref{eq_a5}, there exists some $\theta \in \Theta_0$ with $\|\theta - \widehat{\theta}_0 \| \leq \varepsilon_0$ such that $\PP  \equiv \PP_\theta$. Next, let $\Pi_0$ be the optimal coupling of $\PP_\theta$ and $\PP_{\widehat{\theta}_0}$ w.r.t.\ the $p$-Wasserstein distance $d_{W_p}$. Then, by using Minkowski's inequality and \ref{eq_a2} we obtain that
\begin{align*}
\bigg(\int_{\Omloc} \|x\|^p \PP(\D x)\bigg)^{\nicefrac{1}{p}} 
&= 
\bigg(\int_{\Omloc \times \Omloc} \|x \|^p \Pi_0(\D x,\D y)\bigg)^{\nicefrac{1}{p}}  \\
&\leq \bigg( \int_{\Omloc} \|y\|^p \PP_{\widehat{\theta}_0} (\D y) \bigg)^{\nicefrac{1}{p}}  
+
\operatorname{d}_{W_p}(\PP_\theta, \PP_{\widehat{\theta}_0})\\
&\leq 
\bigg( \int_{\Omloc} \|y\|^p \PP_{\widehat{\theta}_0} (\D y) \bigg)^{\nicefrac{1}{p}}  
+ 
 L_{P_\theta,t}  \cdot  \|\theta-\widehat{\theta}_0\| \\
&\leq \bigg( \int_{\Omloc} \|y\|^p \PP_{\widehat{\theta}_0} (\D y) \bigg)^{\nicefrac{1}{p}}
 +
 L_{P_\theta,t} \cdot \varepsilon_0 
 =: C_{\mathcal{P},0}^{\nicefrac{1}{p}}<\infty.
\end{align*}
}}
Moreover, $\Theta_0^{(\varepsilon_0)} :=\left\{\theta \in \Theta_0~\middle|~\|\theta-\widehat{\theta}_0 \| \leq \varepsilon_0 \right\} \subseteq \R^{D_0}$ is compact as $\Theta_0$ is closed.
The map
\begin{align*}
\Psi:\Theta_0 &\rightarrow \left(\mathcal{M}_1^p(\Omloc), \tau_p\right)\\
\theta &\mapsto \PP_{\theta}
\end{align*}
is continuous by \ref{eq_a2}, and $\mathcal{P}_0 = \Psi\left(\Theta_0^{\varepsilon_0}\right)$ is an image of a compact set under a continuous map and thus compact.
The non-emptiness of $\mathcal{P}_0$ follows as $\Theta_0^{(\varepsilon_0)}$ is non-empty by \ref{eq_a1}. Hence, we have shown that Assumption~\ref{asu_P}~(iv) is satisfied. 

{Next, we show that Assumption~\ref{asu_P}~(i) is fulfilled. To this end, let $t\in \{1,\dots,T-1\}$ be fixed. The non-emptiness of $\mathcal{P}_{t}(\omega^t)$ for each $\omega^t \in \Omega^t$ follows by definition and by \ref{eq_a1}. We define the correspondence
\begin{equation}
	\label{eq:P-param-def-Theta-t}
	\begin{split}
\Omega^t &\twoheadrightarrow  \Theta_t \\
\omega^t &\twoheadrightarrow \Theta_t^{(\varepsilon_t)}(\omega^t):=\left\{\theta \in \Theta_t~\middle|~ \|\theta-\widehat{\theta}_t(\omega^t) \| \leq \varepsilon_t(\omega^t) \right\}.
\end{split}
\end{equation}
Since by assumptions~\ref{eq_a3}~and~\ref{eq_a4} the maps $\Omega^t \ni \omega^t \mapsto \widehat{\theta}_t(\omega^t)$ and $\Omega^t \ni \omega^t \mapsto \varepsilon_t(\omega^t)$ are continuous, and by assumption~\ref{eq_a1} $\Theta_t\subseteq \R^{D_t}$ is non-empty, convex, and closed,  Lemma~\ref{lem_parameter-ball-correspondence} guarantees that $\Theta_t^{(\varepsilon_t)}: \Omega^t \twoheadrightarrow \R^{D_t}$  is a non-empty, compact-valued, continuous correspondence. Moreover,  for any $\omega^t\in \Omega^t$ we have 
\begin{equation}\label{eq:P-param-Ass1-identity}
	\mathcal{P}_t(\omega^t)=\left\{\PP_{\theta} \in \mathcal{M}_1^{\max\{1,p\}}(\Omloc)~\middle|~ \theta \in \Theta_t^{(\varepsilon_t)}(\omega^t)\right\}.
	\end{equation}
 Now, to see that
	$\mathcal{P}_{t}: \Omega^t\twoheadrightarrow \mathcal{M}_1^{\max\{1,p\}}(\Omloc)$ is compact-valued and upper-hemicontinuous, we aim to apply \cite[Theorem~17.20]{Aliprantis}. To that end, let $\left({\omega^t}^{(n)},\PP^{(n)}\right)_{n \in \N} \subseteq \operatorname{Gr} \mathcal{P}_t$, where $\operatorname{Gr} \mathcal{P}_t$ denotes the \emph{graph} of $\mathcal{P}_t$ and let $\omega^t \in \Omloc$ such that ${\omega^t}^{(n)} \to \omega^t$ when $n \to \infty$.
	Then by definition of $\operatorname{Gr} \mathcal{P}_t$, we have for each $n \in \N$ that there exists $\theta^n \in \Theta_t^{(\varepsilon_t)}({\omega^t}^{(n)})$ such that $\PP^{(n)}=\PP_{\theta^n}$. 
	In particular, we see that for each $n \in \N$ we have $\left({\omega^t}^{(n)},\theta^n\right)_{n \in \N} \subseteq \operatorname{Gr} \Theta_t^{(\varepsilon_t)}$. Since we have shown above that $\Theta_t^{(\varepsilon_t)}: \Omega^t \twoheadrightarrow  \Theta_t$ is a non-empty, compact-valued, and upper hemicontinuous correspondence, \cite[Theorem~17.20]{Aliprantis} ensures that there exists a subsequence $(\theta^{n_k})_{k\in \N}$ converging to some $\theta \in  \Theta_t^{(\varepsilon_t)}(\omega^t)$. This, the fact that $\PP^{(n_k)}=\PP_{\theta^{n_k}}$ for each $k \in \N$, and that by assumption~\ref{eq_a2} the map $(\Theta_t\ni \theta \mapsto \PP_\theta)$ is continuous imply that the subsequence $(\PP^{(n_k)})_{k \in \N}$ converges to $\PP_{\theta}\in \mathcal{M}_1^{\max\{1,p\}}(\Omloc)$. The identity \eqref{eq:P-param-Ass1-identity} now ensures that $\PP_{\theta}\in 	\mathcal{P}_t(\omega^t)$, hence  \cite[Theorem~17.20]{Aliprantis} indeed guarantees that $\mathcal{P}_{t}: \Omega^t\twoheadrightarrow \mathcal{M}_1^{\max\{1,p\}}(\Omloc)$ is compact-valued and upper-hemicontinuous. 
	Finally, the lower hemicontinuity  
	will automatically follow once we have proven below that Assumption~\ref{asu_P}~(iii) holds, but where we show 
	the stronger  statement
	that \eqref{eq_condition_P} holds 
	with respect to $d_{W_{\max\{1,p\}}}$ instead of $d_{W_1}$, cf.\ 
	Lemma~\ref{lem:Lipschitz-implies-LHC}.
}
\\

To see that Assumption~\ref{asu_P}~(ii) holds we again assume w.l.o.g. $p \geq 1$. In this case, let $\omega^t=(\omega^t_1,\dots,\omega^t_t)\in \Omega^t$ and $\PP \in \mathcal{P}_t(\omega^t)$. By definition of $\mathcal{P}_t$ in \ref{eq_a4}, there exists some $\theta \in \Theta_t$ such that $\| \theta - \widehat{\theta}_t(\omega^t) \| \leq \varepsilon_t(\omega^t)$ and such that $\PP = \PP_{\theta}$. Let $\pi_t(\D x, \D y)$ denote an optimal coupling of $\PP$ and $\PP_{{\widehat{\theta}_t(\omega^t)}}$ with respect to the Wasserstein-$p$ distance $d_{W_p}$. Then, 
{{by Minkowski's inequality we have 
\begin{equation}\label{eq_proof_param_1}
\begin{aligned}
\bigg(\int_{\Omloc} \|x \|^p \PP(\D x)\bigg)^{\nicefrac{1}{p}}
 &= 
 \bigg(\int_{\Omloc \times \Omloc} \|x \|^p \pi_t(\D x, \D y)\bigg)^{\nicefrac{1}{p}} \\
 &\leq 
 \bigg(\int_{\Omloc \times \Omloc} \|x-y\|^p \pi_t(\D x, \D y)\bigg)^{\nicefrac{1}{p}} 
 + 
 \bigg(\int_{\Omloc} \|y \|^p\PP_{{\widehat{\theta}_t(\omega^t)}} (\D y)\bigg)^{\nicefrac{1}{p}}
\\
&= 
\operatorname{d}_{W_p}\left(\PP,\PP_{{\widehat{\theta}_t(\omega^t)}} \right)
 + 
 \bigg(\int_{\Omloc} \|y \|^p\PP_{{\widehat{\theta}_t(\omega^t)}} (\D y)\bigg)^{\nicefrac{1}{p}}.
\end{aligned}
\end{equation}
By Assumption \ref{eq_a2}, as $p \geq 1$, we have 
\begin{equation}\label{eq_proof_param_2}\operatorname{d}_{W_p}\left(\PP,\PP_{{\widehat{\theta}_t(\omega^t)}} \right)
	=
	\operatorname{d}_{W_p}\left(\PP_\theta,\PP_{{\widehat{\theta}_t(\omega^t)}} \right)
	 \leq
	  L_{P_{\theta},t} \cdot \| \theta - \widehat{\theta}_t(\omega^t) \|
	   \leq 
	   { L_{P_{\theta},t} \cdot \varepsilon_t(\omega^t)
	    \leq 
	    L_{P_{\theta},t} \cdot \overline{\varepsilon}_t. }
\end{equation}
Next, 
for any arbitrary $\omega_{\operatorname{ref}}^{*,t} = (\omega_{\operatorname{ref},1}^{*},\dots,\omega_{\operatorname{ref},t}^{*}) \in \Omega^t$,
  let $\widetilde{\pi}(\D y,\D x)$ be an optimal coupling of $\PP_{{\widehat{\theta}_t(\omega^t)}}$ and $\PP_{{\widehat{\theta}_t(\omega_{\operatorname{ref}}^{*,t})}} $ with respect to $d_{W_p}$. 
  Then, due to Minkowski's inequality, \ref{eq_a2}, and \ref{eq_a3}, we have
\begin{equation*}
\begin{aligned}
\bigg(\int_{\Omloc} \|y \|^p \PP_{{\widehat{\theta}_t(\omega^t)}} (\D y)  \bigg)^{\nicefrac{1}{p}}
&= \bigg(\int_{\Omloc \times \Omloc} \|y\|^p\widetilde{\pi}(\D y,\D z)\bigg)^{\nicefrac{1}{p}} \\
&\leq
 \bigg(\int_{\Omloc}  \|z\|^p \PP_{{\widehat{\theta}_t(\omega_{\operatorname{ref}}^{*,t})}} (\D z)\bigg)^{\nicefrac{1}{p}}
 +
 \operatorname{d}_{W_p}\left(\PP_{{\widehat{\theta}_t(\omega^t)}},~\PP_{{\widehat{\theta}_t(\omega_{\operatorname{ref}}^{*,t})}}\right)\\
&\leq
\bigg(\int_{\Omloc}  \|z\|^p \PP_{{\widehat{\theta}_t(\omega_{\operatorname{ref}}^{*,t})}} (\D z)\bigg)^{\nicefrac{1}{p}}
 + 
  L_{P_{\theta},t}  \cdot \left \|\widehat{\theta}_t(\omega^t)- \widehat{\theta}_t(\omega_{\operatorname{ref}}^{*,t})\right\| \\
&\leq 
\bigg(\int_{\Omloc}  \|z\|^p \PP_{{\widehat{\theta}_t(\omega_{\operatorname{ref}}^{*,t})}} (\D z)\bigg)^{\nicefrac{1}{p}}
+
 L_{P_{\theta},t}  \cdot L_{\widehat{\theta},t} \cdot \sum_{i=1}^t \|\omega_i- \omega_{\operatorname{ref},i}^{*}\|  \\
 &\leq 
 \bigg(\int_{\Omloc}  \|z\|^p \PP_{{\widehat{\theta}_t(\omega_{\operatorname{ref}}^{*,t})}} (\D z)\bigg)^{\nicefrac{1}{p}}
+
L_{P_{\theta},t}  \cdot L_{\widehat{\theta},t} \cdot \sum_{i=1}^t \big(\| \omega_{\operatorname{ref},i}^{*}\|  + \|\omega_i\|\big). 
\end{aligned}
\end{equation*}
Since  $\omega_{\operatorname{ref}}^{*,t} = (\omega_{\operatorname{ref},1}^{*},\dots,\omega_{\operatorname{ref},t}^{*}) \in \Omega^t$ was arbitrarily chosen, we obtain that
\begin{equation}
	\begin{aligned}
	\label{eq_proof_param_3}
&\bigg(\int_{\Omloc} \|y \|^p \PP_{{\widehat{\theta}_t(\omega^t)}} (\D y)  \bigg)^{\nicefrac{1}{p}}\\
&\leq 
 \inf_{\omega_{\operatorname{ref}}^{*,t}\in \Omega^t}\bigg\{\bigg(\int_{\Omloc}  \|z\|^p \PP_{{\widehat{\theta}_t(\omega_{\operatorname{ref}}^{*,t})}} (\D z)\bigg)^{\nicefrac{1}{p}}
+
L_{P_{\theta},t}  \cdot L_{\widehat{\theta},t} \cdot \sum_{i=1}^t\| \omega_{\operatorname{ref},i}^{*}\| \bigg\}
+
L_{P_{\theta},t}  \cdot L_{\widehat{\theta},t} \cdot \sum_{i=1}^t  \|\omega_i\|. 
\end{aligned}
\end{equation}
Therefore, we see that \eqref{eq_proof_param_1}, \eqref{eq_proof_param_2}, and \eqref{eq_proof_param_3} together imply that 
\begin{align*}
\int_{\Omloc} \|x \|^p \PP(\D x) 
&\leq
2^{p-1}\Bigg(
L_{P_{\theta},t} \cdot {\overline{\varepsilon}_t}
+
 \inf_{\omega_{\operatorname{ref}}^{*,t}\in \Omega^t}\bigg\{\bigg(\int_{\Omloc}  \|z\|^p \PP_{{\widehat{\theta}_t(\omega_{\operatorname{ref}}^{*,t})}} (\D z)\bigg)^{\nicefrac{1}{p}}
+
L_{P_{\theta},t}  \cdot L_{\widehat{\theta},t} \cdot \sum_{i=1}^t\| \omega_{\operatorname{ref},i}^{*}\| \bigg\}\Bigg)^p\\
& \quad
+
2^{p-1}L_{P_{\theta},t}^p  \cdot L_{\widehat{\theta},t}^p \cdot  t^{p-1}\sum_{i=1}^t  \|\omega_i\|^p\\
&\leq C_{\mathcal{P},t} \cdot \left(1+\sum_{i=1}^t \|\omega_i\|^p\right),
\end{align*}
for 
\begin{equation*}
\begin{aligned}
C_{\mathcal{P},t}:=\max \Bigg\{&
2^{p-1}\Bigg(
L_{P_{\theta},t} \cdot {\overline{\varepsilon}_t}
+
\inf_{\omega_{\operatorname{ref}}^{*,t}\in \Omega^t}\bigg\{\bigg(\int_{\Omloc}  \|z\|^p \PP_{{\widehat{\theta}_t(\omega_{\operatorname{ref}}^{*,t})}} (\D z)\bigg)^{\nicefrac{1}{p}}
+
L_{P_{\theta},t}  \cdot L_{\widehat{\theta},t} \cdot \sum_{i=1}^t\| \omega_{\operatorname{ref},i}^{*}\| \bigg\}\Bigg)^p,\\
& \quad
,2^{p-1}L_{P_{\theta},t}^p  \cdot L_{\widehat{\theta},t}^p \cdot t^{p-1},
~1\Bigg\}<\infty,	
\end{aligned}
\end{equation*}
which shows Assumption~\ref{asu_P}~(ii).
}}

{It remains to show Assumption~\ref{asu_P}~(iii), but with respect to $d_{W_{\max\{1,p\}}}$ instead of $d_{W_1}$ in \eqref{eq_condition_P}. To that end, let $\omega^t=(\omega_1,\dots,\omega_t),~\widetilde{\omega}^t=(\widetilde{\omega}_1,\dots,\widetilde{\omega}_t) \in \Omega^t$ and let $\PP \in \mathcal{P}_t(\omega^t)$. By definition of $\mathcal{P}_t$ in \ref{eq_a4},  there exists some $\theta \in \Theta_t$ with $\| \theta - \widehat{\theta}(\omega^t)\| \leq \varepsilon_t(\omega^t)$ such that $\PP_{\theta} \equiv \PP \in \mathcal{P}_t(\omega^t)$. 
Now, recall the representation \eqref{eq:P-param-Ass1-identity} for the correspondence  $\mathcal{P}_t$ with respect to the correspondence 
$\Theta_t^{(\varepsilon_t)}$ defined in
\eqref{eq:P-param-def-Theta-t}. By definition, $\theta \in \Theta_t^{(\varepsilon_t)}(\omega^t)$. Due to \ref{eq_a3} and \ref{eq_a4}, we can  apply Lemma~\ref{lem_parameter-ball-correspondence}~(ii) (with $\Gamma \leftarrow\Theta_t^{(\varepsilon_t)}$, $\mathcal{X} \leftarrow \Omega^t$) to obtain  some $\widetilde{\theta} \in \Theta_t^{(\varepsilon_t)}(\widetilde{\omega}^t)$
such that
\begin{equation*}
	\Vert \theta -\widetilde{\theta}\Vert \leq \big(L_{\widehat{\theta},t} + L_{\varepsilon,t}\big) \cdot \sum_{i=1}^t \| \omega_i-\widetilde{\omega}_i\|.
\end{equation*}
Therefore, we see that $\PP_{\widetilde{\theta}} \in \mathcal{P}_t (\widetilde{\omega}^t)$, as well as by \ref{eq_a2} that
\begin{equation}
	\label{eq:Asu_2.3(iii)_parameter}
\begin{split}
\operatorname{d}_{W_{1}}\left(\PP_{{\theta}}, \PP_{\widetilde{\theta}} \right) 
\leq 
\operatorname{d}_{W_{\max \{1,p\}}}\left(\PP_{{\theta}}, \PP_{\widetilde{\theta}} \right) 
\leq 
L_{\PP_{{\theta}},t} \cdot \| \widetilde{\theta} -\theta\| 
\leq 
L_{\PP_{{\theta}},t} \cdot \big(L_{\widehat{\theta},t} + L_{\varepsilon,t}\big) \cdot \sum_{i=1}^t \| \omega_i-\widetilde{\omega}_i\|.
\end{split}
\end{equation}
This shows that Assumption~\ref{asu_P}~(iii) is indeed fulfilled with $L_{\mathcal{P},t}:=L_{\PP_{{\theta}},t} \cdot \big(L_{\widehat{\theta},t} + L_{\varepsilon,t}\big)\geq0$.}
\end{proof}
%
%
%
{
\begin{proof}[Proof of Proposition~\ref{prop_normal_family}]
	We verify assumptions ~\ref{eq_a1} -- \ref{eq_a5}.
\begin{itemize}
	\item[\ref{eq_a1}] By assumption
	 $\Theta_t= \R^d \times [0,\infty)^d$ is non-empty, convex, and closed.
	 \item[\ref{eq_a2}] First of all, note that for any $p\in\{0,1,2\}$ and any
	 $(\boldsymbol{\mu}, \boldsymbol{\sigma})\in \R^d \times [0,\infty)^d$ we have that $\PP_{(\boldsymbol{\mu}, \boldsymbol{\sigma})} \in \mathcal{M}_1^{\max\{1,p\}}(\R^d)$.
	Moreover, since $p\in\{0,1,2\}$, we have by \cite{dowson1982frechet} that for any $(\boldsymbol{\mu}_1, \boldsymbol{\sigma}_1), (\boldsymbol{\mu}_2, \boldsymbol{\sigma}_2) \in \R^d \times [0,\infty)^d$
	\begin{equation*}
		\begin{split}
		d_{W_{\max\{1,p\}}}\left(\PP_{(\boldsymbol{\mu}_1, \boldsymbol{\sigma}_1)},\PP_{(\boldsymbol{\mu}_2, \boldsymbol{\sigma}_2)}\right) 
		&\leq 
			d_{W_{2}}\left(\PP_{(\boldsymbol{\mu}_1, \boldsymbol{\sigma}_1)},\PP_{(\boldsymbol{\mu}_2, \boldsymbol{\sigma}_2)}\right) \\
			&= 
			\sqrt{\|\boldsymbol{\mu}_1-\boldsymbol{\mu}_2\|^2 + \mbox{tr}\big(\big(\mbox{diag}(\boldsymbol{\sigma}_1)-\mbox{diag}(\boldsymbol{\sigma}_2)\big)^2\big)}\\
				&\leq
			\|\boldsymbol{\mu}_1-\boldsymbol{\mu}_2\| + \sqrt{\mbox{tr}\big(\big(\mbox{diag}(\boldsymbol{\sigma}_1)-\mbox{diag}(\boldsymbol{\sigma}_2)\big)^2\big)}
			\\
				&= 
			\|\boldsymbol{\mu}_1-\boldsymbol{\mu}_2\| +
			\|\boldsymbol{\sigma}_1-\boldsymbol{\sigma}_2\|.
			\end{split}	
			\end{equation*}
			\item[\ref{eq_a3}] By construction, we see that both  $\widehat{\boldsymbol{\mu}}_t:\Omega^t \to \R^d$ defined in \eqref{eq_defn_mu_hat} and 
			$\widehat{\boldsymbol{\sigma}}_t=(\widehat{\sigma}_{t,1},\dots,\widehat{\sigma}_{t,d}):\Omega^t \to [0,\infty)^d$ defined in \eqref{eq_defn_sigma_hat} are Lipschitz continuous.
			\item[\ref{eq_a4}] Follows by definition of $\mathcal{P}_t$.
			\item[\ref{eq_a5}] Follows by definition of $\mathcal{P}_0$.
\end{itemize}	
\end{proof}
}
\begin{proof}[Proof of Proposition~\ref{prop_exponential_family}]
We verify assumptions ~\ref{eq_a1} -- \ref{eq_a5}.
\begin{itemize}
\item[\ref{eq_a1}] By assumption $\Theta_t= \Omloc = [0,\infty)$ is non-empty, convex, and closed.
\item[\ref{eq_a2}] Let $ t\in \{0,1,\dots,T-1\}$ and let $\theta_1,\theta_2 \in \Theta_t$ with $\theta_1 \neq \theta_2$. We distinguish two cases.\\
\underline{Case 1: $\theta_1,\theta_2 \neq 0$}\\
 Then, we have the representation
\begin{equation}\label{eq_proof_prop_exp_1}
d_{W_{\max\{1,p\}}}\left(\PP_{\theta_1},\PP_{\theta_2}\right) = \left(\int_0^1 \left|F_{\PP_{\theta_1}}^{-1}(y)-F_{\PP_{\theta_2}}^{-1}(y)\right|^{{\max\{1,p\}}} \D y \right)^{\tfrac{1}{\max\{1,p\}}},
\end{equation}
where $[0,1] \ni y \mapsto F_{\PP_{\theta_i}}^{-1}(y):=\inf\{x \in \R~|~\PP_{\theta_i}\left((-\infty,x]\right) \geq y\}$ denotes the quantile function of $\PP_{\theta_i}$ for $i=1,2$, see, e.g. \cite[Equation (3.5)]{ruschendorf2007monge} or \cite{vallender1974calculation}. For any $i=1,2$ note that the cumulative distribution function is given by $F_{\PP_{\theta_i}}(x) =1-e^{-\frac{1}{\theta_i}x},~~x\in [0,\infty)$. Hence, the quantile function computes as
\[
F_{\PP_{\theta_i}}^{-1}(y) = \begin{cases} \infty&\text{ if }y = 1,\\
-\theta_i \log(1-y), &\text{ if }y \in (0,1),\\
-\infty, &\text{ if }y = 0.
\end{cases}
\]
We set 
\begin{equation}\label{eq_defn_LPT}
L_{P_\theta,t}:= \left[\max\{1,p\}!\right]^{\tfrac{1}{\max\{1,p\}}} < \infty .
\end{equation}
By \eqref{eq_proof_prop_exp_1}, we obtain 
\begin{align*}
d_{W_{\max\{1,p\}}}\left(\PP_{\theta_1},\PP_{\theta_2}\right) &= \left(\int_0^1 \left|\theta_1 \log(1-y)-\theta_2\log(1-y)\right|^{{\max\{1,p\}}} \D y \right)^{\tfrac{1}{\max\{1,p\}}}\\
&\leq |\theta_1-\theta_2| \left(\int_0^1 |\log(1-y)|^{\max\{1,p\}}  \D y\right)^{\tfrac{1}{\max\{1,p\}}} \\
&= |\theta_1-\theta_2|  \cdot L_{P_\theta,t}.
\end{align*}
\underline{Case 2: $\theta_1 = 0$ or $\theta_2 = 0$}
\\
Without loss of generality $\theta_1 = 0$ and $\theta_2 >0$. Then, we have 
\[
F_{\PP_{\theta_1}}^{-1}(y)=F_{\delta_{\{0\}}}^{-1}(y)=\begin{cases}
0 &\text{ if } y \in (0,1],\\
-\infty &\text{ if } y = 0.
\end{cases}
\]
Therefore, by \eqref{eq_proof_prop_exp_1} and by the definition of $L_{P_\theta,t}$ in \eqref{eq_defn_LPT}, we obtain
\begin{align*}
d_{W_{\max\{1,p\}}}\left(\PP_{\theta_1},\PP_{\theta_2}\right) &= \left(\int_0^1 \left|0-\theta_2\log(1-y)\right|^{{\max\{1,p\}}} \D y \right)^{\tfrac{1}{\max\{1,p\}}}\\
&= \theta_2 \left(\int_0^1 |\log(1-y)|^{\max\{1,p\}}  \D y\right)^{\tfrac{1}{\max\{1,p\}}} \\
&= |\theta_1-\theta_2|  \cdot L_{P_\theta,t}.
\end{align*}
\item[\ref{eq_a3}] Let $ t\in \{1,\dots,T-1\}$, and let $\omega^t, \widetilde{\omega}^t \in \Omega^t$. Then, we have
\[
\left\| \widehat{\theta}_t (\omega^t)- \widehat{\theta}_t (\widetilde{\omega}^t ) \right\| \leq \frac{1}{t} \sum_{i=1}^t\|\omega_i-\widetilde{\omega}_i\|.
\]
\item[\ref{eq_a4}] Follows by definition of $\mathcal{P}_t$.
\item[\ref{eq_a5}] Follows by definition of $\mathcal{P}_0$.
\end{itemize}
\end{proof}
%
{
\subsubsection{Proofs of Section~\ref{sec_controls}}
In this section we provide the proof of Proposition~\ref{prop_controls}.

\begin{proof}[Proof of Proposition~\ref{prop_controls}]
	By assumption, $\mathcal{A}_0\subseteq \R^{m_0}$ is non-empty and compact, hence Assumption~\ref{asu_A}~(iii) is satisfied. To see that Assumptions~\ref{asu_A}~(i)~\&~(ii) are satisfied, we distinguish between the three different assumptions (i),~(ii),~and~(iii).
\begin{itemize}
	\item [(i)] Follows directly since $\mathcal{A}_t$ is a constant mapping taking value of one non-empty compact set in $\R^{m_t}$.
	\item[(ii)] For every $t \in \{1,\dots,T-1\}$   Assumption~\ref{asu_A}~(i)~\&~(ii) holds directly from Lemma~\ref{lem_parameter-ball-correspondence}~(i)~\&~(ii) (with $\Gamma\leftarrow \mathcal{A}_t$, $\mathcal{X}\leftarrow \Omega^t$ in the notation of Lemma~\ref{lem_parameter-ball-correspondence}). 
\item[(iii)] Fix $t \in \{1,\dots,T-1\}$.
To see  that Assumption~\ref{asu_A}~(i) holds, note first that by construction $\mathcal{A}_t$ is a non-empty and compact valued correspondence.
Moreover, to see that $\mathcal{A}_t$ is upper-hemicontinuous, let
$\big({\omega^t}^{(n)},a^{(n)}\big)_{n \in \N} \subseteq \operatorname{Gr} \mathcal{A}_t$, where $\operatorname{Gr} \mathcal{A}_t$ denotes the \emph{graph} of $\mathcal{A}_t$, and assume that $\big({\omega^t}^{(n)}\big)_{n \in \N}$ converges to some $\omega^t \in \Omega^t$.  
For any $n \in \N$ we denote $a^{(n)}=(a^{(n)}_{1},\dots,a^{(n)}_{m_t})\in \R^{m_t}$. 
Then, since by assumption for each $j=1,\dots, m_t$ both  $\underline{a}_{t,j}:\Omega^t\mapsto \R$ and $\overline{a}_{t,j}:\Omega^t\mapsto \R$ 
are  Lipschitz continuous, there exists a constant $L_{a}>0$ such that for each $j=1,\dots, m_t$ we have
\begin{equation*}
\begin{split}
-\infty 
&<
\underline{a}_{t,j}(\omega^t)-L_{a}\sup_{\ell\in \N} \big(\Vert {\omega^t}^{(\ell)} -{\omega^t} \Vert\big)\\
&\leq \underline{a}_{t,j}({\omega^t}^{(n)})\\
&\leq a^{(n)}_{j}\\
&\leq \overline{a}_{t,j}({\omega^t}^{(n)})\\
&\leq \overline{a}_{t,j}(\omega^t)+L_{a}\sup_{\ell\in \N} \big(\Vert {\omega^t}^{(\ell)} -{\omega^t} \Vert\big)
<\infty.
\end{split}
\end{equation*}
Therefore, we see that 
 $(a^{(n)})_{n\in \N}$ is a bounded sequence in $\R^{m_t}$. This implies that there exists a subsequence  $(a^{(n_k)})_{k\in \N}$ which converges to some $a=(a_1,\dots,a_{m_t})\in \R^{m_t}$. Since for each $j=1,\dots,m_t$ and $k \in \N$ we have that 
 $\underline{a}_{t,j}({\omega^t}^{(n_k)}) 
 \leq 
  a^{(n_k)}_j 
 \leq 
 \overline{a}_{t,j}({\omega^t}^{(n_k)})$, 
 the continuity of both  $\underline{a}_{t,j}:\Omega^t\mapsto \R$ and $\overline{a}_{t,j}:\Omega^t\mapsto \R$ ensures that also
  $\underline{a}_{t,j}({\omega^t}) 
 \leq 
 a_j 
 \leq 
 \overline{a}_{t,j}({\omega^t})$.
 This shows that  $a \in \mathcal{A}_t(\omega^t)$, which in turn by  the characterization of upper hemicontinuity provided in Lemma~\cite[Theorem 17.20]{Aliprantis} demonstrates that $\mathcal{A}_t$ is indeed upper-hemicontinuous.
 Therefore, we see that indeed  Assumption~\ref{asu_A}~(i) holds.
 
 Finally, to see that Assumption~\ref{asu_A}~(ii) holds, fix any $\omega^t$, $\widetilde{\omega}^t \in \Omega^t$ and $a \in \mathcal{A}_t(\omega^t)$. Define $\widetilde{a}=(\widetilde{a}_1,\dots,\widetilde{a}_{m_t})\in \R^{m_t}$ by setting for each $j=1,\dots, m_t$
  \begin{equation}
 	\label{eq:Lipschiz-control}
 	\begin{split}
 		\widetilde{a}_j:&= 
 		a_j
 		+
 		\big(\underline{a}_{t,j}(\widetilde{\omega}^t) -a_j\big)^+ 
 		-
 		\big(a_j-\overline{a}_{t,j}(\widetilde{\omega}^t) \big)^+ \\
 		&= 
 		\underline{a}_{t,j}(\widetilde{\omega}^t) \mathbf{1}_{\{a_j<\underline{a}_{t,j}(\widetilde{\omega}^t)\}} 
 		+ 
 		a_j 
 		\mathbf{1}_{\{\underline{a}_{t,j}(\widetilde{\omega}^t)\leq a_j \leq \overline{a}_{t,j}(\widetilde{\omega}^t) \}}
 		+ 
 		\overline{a}_{t,j}(\widetilde{\omega}^t)
 		\mathbf{1}_{\{ a_j > \overline{a}_{t,j}(\widetilde{\omega}^t) \}}.
 	\end{split}
 	\end{equation}
 	We claim that \eqref{eq_condition_A} holds. 
 	Indeed, on 
 	$\{\underline{a}_{t,j}(\widetilde{\omega}^t)\leq a_j \leq \overline{a}_{t,j}(\widetilde{\omega}^t) \}\subseteq \Omega^t$ we have $\widetilde{a}_j= a_j$ by construction. 
 	Moreover, on the set  $\{a_j<\underline{a}_{t,j}(\widetilde{\omega}^t)\}\subseteq \Omega^t$ 
 	we
 	have 
 	 $|a_j-\widetilde{a}_j|= \underline{a}_{t,j}(\widetilde{\omega}^t)-a_j \leq \underline{a}_{t,j}(\widetilde{\omega}^t)-\underline{a}_{t,j}(\omega^t)$. Furthermore, on the set $\{ a_j > \overline{a}_{t,j}(\widetilde{\omega}^t) \}\subseteq \Omega^t$
 	 we have 
    $|a_j-\widetilde{a}_j|=  a_j-\overline{a}_{t,j}(\widetilde{\omega}^t) 
    \leq \overline{a}_{t,j}(\omega^t)-\overline{a}_{t,j}(\widetilde{\omega}^t) $.
Therefore, since by assumption for each $j=1,\dots, m_t$ both  $\underline{a_{t,j}}:\Omega^t\mapsto \R$ and $\overline{a_{t,j}}:\Omega^t\mapsto \R$ 
are  Lipschitz continuous, there exists a constant $L_{a}>0$ such that
\begin{equation*}
	\begin{split}
\Vert a- \widetilde{a} \Vert
&\leq \sum_{j=1}^{m_t} |a_j-\widetilde{a}_j|\\
&\leq \sum_{j=1}^{m_t}\big( |\underline{a}_{t,j}(\omega^t)-\underline{a}_{t,j}(\widetilde{\omega}^t)|
+ |\overline{a}_{t,j}(\omega^t)-\overline{a}_{t,j}(\widetilde{\omega}^t)|
\big)\\
&\leq \sum_{j=1}^{m_t} 2 L_{a} \sum_{i=1}^t \Vert \omega_i -\widetilde{\omega}_i \Vert
=L_{\mathcal{A},t} \sum_{i=1}^t \Vert \omega_i -\widetilde{\omega}_i \Vert,
\end{split}
\end{equation*}
where $L_{\mathcal{A},t}:= 2 m_t  L_{a}>0$. Hence, we see that indeed that Assumption~\ref{asu_A}~(ii) holds, which finishes the proof.
\end{itemize}
\end{proof}
}
%
%
{
\subsection{Proof of Section~\ref{sec_RobustVSNonRobust}}
In this subsection we provide the proof of Theorem~\ref{thm:RobustVSNonRobust}.
\begin{proof}[Proof of Theorem~\ref{thm:RobustVSNonRobust}]
	We start with the proof of (1).\\
	First, observe that since by assumption each $\Omega^t \ni \omega^t \mapsto \widehat{\PP}_t(\omega^t) \in \mathcal{M}_1^{\max\{1,p\}}(\Omloc)$ is  $L_{\widehat{\mathbb{P}},t}$-Lipschitz continuous  w.r.t.\ the $d_{W_1}$-metric, we have for the choice $\mathcal{P}_t(\omega^t):=\widehat{\mathbb{P}}_t(\omega^t)$ for each $\omega^t \in \Omega^t$, $t\in \{1,\dots,T-1\}$ and $\mathcal{P}_0:=\widehat{\mathbb{P}}_0$  (i.e.\ the special case where the set-valued map $\mathcal{P}_t$ is in fact a singleton defined by the kernel $\widehat{\PP}_t$) that Assumption~\ref{asu_P} is satisfied with $L_{\mathcal{P},t}=L_{\widehat{\mathbb{P}},t}$ and (by following \eqref{eq_prop_22_1}--\eqref{eq_prop_22_1+++}) that
	$$
	\mathcal{C}_{\mathcal{P},t}=
	\max\left\{2^{p-1}\bigg(
	\inf_{\omega_a^t \in \Omega^t}\bigg\{
	\bigg(\int_{\Omloc }\|z\|^p \widehat{\PP}_t(\omega_a^t) (\D z) \bigg)^{\nicefrac{1}{p}}
	+
	L_{\widehat{\mathbb{P}},t} \sum_{i=1}^t \|\omega_{a,i}\|
	\bigg\}\bigg)^p,
	2^{p-1}L_{\widehat{\mathbb{P}},t}^pt^{p-1},~ 1\right\}<\infty.$$
	Likewise,  since by assumption each $\Omega^t \ni \omega^t \mapsto \PP^{\rm{TR}}_t(\omega^t) \in \mathcal{M}_1^{\max\{1,p\}}(\Omloc)$ is  $L_{\mathbb{P}^{\rm{TR}},t}$-Lipschitz continuous w.r.t.\ the $d_{W_1}$-metric, we have for the choice $\mathcal{P}_t(\omega^t):=\mathbb{P}^{\rm{TR}}_t(\omega^t)$ for each $\omega^t \in \Omega^t$, $t\in \{1,\dots,T-1\}$ and $\mathcal{P}_0:=\mathbb{P}^{\rm{TR}}_0$ that Assumption~\ref{asu_P} is satisfied with  $L_{\mathcal{P},t}=L_{\mathbb{P}^{\rm{TR}},t}$ and $\mathcal{C}_{\mathcal{P},t}$ as above but with $L_{\widehat{\mathbb{P}},t}$ replaced by $L_{\mathbb{P}^{\rm{TR}},t}$.

Now, for each  $t=T-1,\dots,1,0$ define\footnote{We denote for any $t=T-1,\dots,1$ the set $C^\alpha(\Omega^t \times \mathcal{A}^t):=\{F:\Omega^t \times \mathcal{A}^t \to \R \, | F \mbox{ is $\alpha$-H\"older continuous}\}$} the following operators 
$\widehat{\T}_t:C^\alpha(\Omega^{t+1} \times \mathcal{A}^{t+1}) \to C^\alpha(\Omega^t \times \mathcal{A}^t)$ 
and 
$\T^{\rm{TR}}_t:C^\alpha(\Omega^{t+1} \times \mathcal{A}^{t+1}) \to C^\alpha(\Omega^t \times \mathcal{A}^t)$
 by setting for any
  $v \in C^\alpha(\Omega^{t+1} \times \mathcal{A}^{t+1})$ 
  and $(\omega^{t},a^{t})\in \Omega^t \times \mathcal{A}^t$

\begin{align}
(\widehat{\T}_tv)(\omega^{t},a^{t}):=& \sup_{a \in \mathcal{A}_t(\omega^t)} \E_{\widehat{\PP}_t(\omega^t)}\big[v\big(\omega^t \otimes_t \cdot, (a^t,a)\big)\big], \label{eq:def:widehatT}\\
(\T^{\rm{TR}}_tv)(\omega^{t},a^{t}):=& \sup_{a \in \mathcal{A}_t(\omega^t)} \E_{\PP^{\rm{TR}}_t(\omega^t)}\big[v\big(\omega^t \otimes_t \cdot, (a^t,a)\big)\big].
\label{eq:def:trueT}
\end{align}
Note that from Lemma~\ref{lem_appendix_backwards_iteration} and Remark~\ref{rem:backwards_iteration}, we see that the operators are well-defined. Now, for each $t=1,\dots,T$ define the operator\footnote{We set $C^\alpha(\Omega^{0} \times \mathcal{A}^{0}):=\R$.}
$\widehat{\T}^{t}:C^\alpha(\Omega^{T} \times \mathcal{A}^{T}) \to C^\alpha(\Omega^{T-t} \times \mathcal{A}^{T-t})$ 
as well as the operator
$\T^{\rm{TR},t}:C^\alpha(\Omega^{T} \times \mathcal{A}^{T}) \to C^\alpha(\Omega^{T-t} \times \mathcal{A}^{T-t})$
by setting for any
$v \in C^\alpha(\Omega^{T} \times \mathcal{A}^{T})$ 
\begin{align}
\widehat{\T}^tv&:=	\widehat{\T}_{T-t} \circ \dots \circ \widehat{\T}_{T-1} v,
\label{eq:def:widehatT-global}\\
\T^{\rm{TR},t}v&:=	\T^{\rm{TR}}_{T-t} \circ \dots \circ \T^{\rm{TR}}_{T-1} v.  
\label{eq:def:trueT-global}
\end{align}
By comparing the recursive iteration \eqref{eq_defn_J_t}--\eqref{eq_defn_Psi_0} with the ones in \eqref{eq:def:widehatT}--\eqref{eq:def:trueT-global}  and applying the dynamic programming principle in Theorem~\ref{thm_main_result} we see that
\begin{equation}\label{eq:backward-iteration-identity}
	\widehat{\mathcal{V}}= \widehat{\T}^T\Psi 
	\qquad \mbox{ and } \qquad
	\mathcal{V}^{\rm{TR}} = \T^{\rm{TR},T}\Psi.
\end{equation}

Now, we claim that for each $t=1,\dots,T$ we have for all $(\omega^{T-t},a^{T-t})\in \Omega^{T-t}\times \mathcal{A}^{T-t}$ that\footnote{We use the usual convention that $\prod_{\emptyset}=1$, i.e.\ the product indexed by the emptyset is $1$.}
\begin{equation}
	\label{eq:upper-bound-recursion}
	\begin{split}
&\Big|(\T^{\rm{TR},t}\Psi)(\omega^{T-t},a^{T-t})-(\widehat{\T}^t\Psi)(\omega^{T-t},a^{T-t})\Big|\\
&\leq 
 L_{\Psi}
\sum_{s={T-t}}^{T-1} \bigg[
\Big(2^{T-(s+1)}  \prod_{u=s+1}^{T-1} \max\left\{L_{\mathcal{A},u}^{\alpha}+L_{\widehat{\PP},u}^{\alpha},1\right\}\Big)
\cdot
 \mu^{\rm{err},\alpha}_{s,T-t}(\omega^{T-t})
\bigg].
\end{split}
\end{equation}
We prove \eqref{eq:upper-bound-recursion} by (forward) induction. To that end, to see that \eqref{eq:upper-bound-recursion} holds for $t=1$ note that by definition of the operators in \eqref{eq:def:widehatT}--\eqref{eq:def:trueT-global} we have for any $(\omega^{T-1},a^{T-1})\in \Omega^{T-1}\times \mathcal{A}^{T-1}$ that
\begin{equation*}
	\begin{split}
	&\Big|(\T^{\rm{TR},1}\Psi)(\omega^{T-1},a^{T-1})-(\widehat{\T}^1\Psi)(\omega^{T-1},a^{T-1})\Big|\\
	&=
	\Big|(\T^{\rm{TR}}_{T-1}\Psi)(\omega^{T-1},a^{T-1})-(\widehat{\T}_{T-1}\Psi)(\omega^{T-1},a^{T-1})\Big|\\
	&=
	\bigg|\sup_{a \in \mathcal{A}_{T-1}(\omega^{T-1})} \E_{\PP^{\rm{TR}}_{T-1}(\omega^{T-1})}\big[\Psi\big(\omega^{T-1} \otimes_{T-1} \cdot, (a^{T-1},a)\big)\big]\\
	& \quad \quad -
	\sup_{a \in \mathcal{A}_{T-1}(\omega^{T-1})} \E_{\widehat{\PP}_{T-1}(\omega^{T-1})}\big[\Psi\big(\omega^{T-1} \otimes_{T-1} \cdot, (a^{T-1},a)\big)\big]
	\bigg|.
	\end{split}
\end{equation*} 
Let $\Pi_{T-1}(d\omega_{T},d\widetilde\omega_{T})\in \mathcal{M}_1(\Omloc\times \Omloc)$ be the optimal coupling of $\PP^{\rm{TR}}_{T-1}(\omega^{T-1})$ and $\widehat{\PP}_{T-1}(\omega^{T-1})$ with respect to $d_{W_1}(\cdot,\cdot)$. Then, the H\"older continuity of $\Psi$ imposed in \eqref{eq_Lipschitz_1} in Assumption~\ref{asu_psi} and Jensen's inequality therefore imply that 
\begin{equation}
	\label{eq:upper-bound-recursion-time1}
	\begin{split}
	&\Big|(\T^{\rm{TR},1}\Psi)(\omega^{T-1},a^{T-1})-(\widehat{\T}^1\Psi)(\omega^{T-1},a^{T-1})\Big|\\
	&\leq 	
	\sup_{a \in \mathcal{A}_{T-1}(\omega^{T-1})} \int_{\Omloc \times \Omloc}
	\big| \Psi(\omega^{T-1}\otimes_{T-1}\omega_T,a^{T-1},a)-
			\Psi(\omega^{T-1}\otimes_{T-1}\widetilde\omega_T,a^{T-1},a) \big|	\,	\Pi_{T-1}(d\omega_{T},d\widetilde\omega_{T})\\
	&\leq 
	L_\Psi \int_{\Omloc \times \Omloc}
			\Vert\omega_T-\widetilde\omega_T\Vert^\alpha		\, \Pi_{T-1}(d\omega_{T},d\widetilde\omega_{T})\\
	&\leq L_\Psi \cdot \big(d_{W_1}\big(\PP^{\rm{TR}}_{T-1}(\omega^{T-1}),\widehat{\PP}_{T-1}(\omega^{T-1})\big)\big)^\alpha
\end{split}
\end{equation}
Moreover, note that by the definition in \eqref{eq:def:err-alpha}
\begin{equation}\label{def-quantity-UB-inducStart}
	\big(d_{W_1}\big(\PP^{\rm{TR}}_{T-1}(\omega^{T-1}),\widehat{\PP}_{T-1}(\omega^{T-1})\big)\big)^\alpha = \mu^{\rm{err},\alpha}_{T-1,T-1}(\omega^{T-1}).
\end{equation}
This and \eqref{eq:upper-bound-recursion-time1}  hence show that
\begin{equation*}
\Big|(\T^{\rm{TR},1}\Psi)(\omega^{T-1},a^{T-1})-(\widehat{\T}^1\Psi)(\omega^{T-1},a^{T-1})\Big|
\leq L_\Psi \cdot	\mu^{\rm{err},\alpha}_{T-1,T-1}(\omega^{T-1}),
\end{equation*}
which indeed coincides with \eqref{eq:upper-bound-recursion} for $t=1$. Now, for the induction step assume that \eqref{eq:upper-bound-recursion} holds for some $t-1\geq 1$ and we aim to show that then \eqref{eq:upper-bound-recursion} also holds for $t$.
To that end, observe that by definition of the operators in \eqref{eq:def:widehatT}--\eqref{eq:def:trueT-global} we have for any $(\omega^{T-t},a^{T-t})\in \Omega^{T-t}\times \mathcal{A}^{T-t}$ that
\begin{equation}
		\label{eq:upper-bound-recursion-timet-Step1}
\begin{split}
	&\Big|(\T^{\rm{TR},t}\Psi)(\omega^{T-t},a^{T-t})-(\widehat{\T}^t\Psi)(\omega^{T-t},a^{T-t})\Big|\\
	&=
	\Big|(\T^{\rm{TR}}_{T-t}\circ\T^{\rm{TR},t-1}\Psi)(\omega^{T-t},a^{T-t})
			-(\widehat{\T}_{T-t}\circ\widehat{\T}^{t-1}\Psi)(\omega^{T-t},a^{T-t})\Big|\\
	&\leq 
	\Big|(\T^{\rm{TR}}_{T-t}\circ\T^{\rm{TR},t-1}\Psi)(\omega^{T-t},a^{T-t})
	-(\T^{\rm{TR}}_{T-t}\circ\widehat{\T}^{t-1}\Psi)(\omega^{T-t},a^{T-t})\Big|\\
	& \quad + \Big|(\T^{\rm{TR}}_{T-t}\circ\widehat{\T}^{t-1}\Psi)(\omega^{T-t},a^{T-t})
	-(\widehat{\T}_{T-t}\circ\widehat{\T}^{t-1}\Psi)(\omega^{T-t},a^{T-t})\Big|.
\end{split}
\end{equation}
Now, note that by definition of the operator $\T^{\rm{TR}}_{T-t}$ in \eqref{eq:def:trueT}, the induction hypothesis, and defintion \eqref{eq:def:err-alpha}
 we see that
\begin{equation}
	\label{eq:upper-bound-recursion-timet-Step2a}
	\begin{split}
&\Big|(\T^{\rm{TR}}_{T-t}\circ\T^{\rm{TR},t-1}\Psi)(\omega^{T-t},a^{T-t})
-(\T^{\rm{TR}}_{T-t}\circ\widehat{\T}^{t-1}\Psi)(\omega^{T-t},a^{T-t})\Big|\\
&\leq	
\sup_{a \in \mathcal{A}_{T-t}(\omega^{T-t})} \E_{\PP^{\rm{TR}}_{T-t}(\omega^{T-t})}\Big[\big|\T^{\rm{TR},t-1}\Psi\big(\omega^{T-t} \otimes_{T-t} \cdot, (a^{T-t},a)\big) 
- \widehat{\T}^{t-1}\Psi\big(\omega^{T-t} \otimes_{T-t} \cdot, (a^{T-t},a)\big)\big|
\Big]\\
&\leq  L_{\Psi}
\sum_{s={T-t+1}}^{T-1} \bigg[
\Big(2^{T-(s+1)}  \prod_{u=s+1}^{T-1} \max\left\{L_{\mathcal{A},u}^{\alpha}+L_{\widehat{\PP},u}^{\alpha},1\right\}\Big)
\cdot \E_{\PP^{\rm{TR}}_{T-t}(\omega^{T-t})}\big[\mu^{\rm{err},\alpha}_{s,T-t+1}(\omega^{T-t}\otimes_{T-t} \cdot)\big] \bigg]\\
&= L_{\Psi}
\sum_{s={T-t+1}}^{T-1} \bigg[
\Big(2^{T-(s+1)}  \prod_{u=s+1}^{T-1} \max\left\{L_{\mathcal{A},u}^{\alpha}+L_{\widehat{\PP},u}^{\alpha},1\right\}\Big)
\cdot
\mu^{\rm{err},\alpha}_{s,T-t}(\omega^{T-t}) \bigg].
	\end{split}
\end{equation}
Moreover, observe that by definition of the operators in \eqref{eq:def:widehatT} and \eqref{eq:def:widehatT-global} together with Lemma~\ref{lem_appendix_backwards_iteration} and Remark~\ref{rem:backwards_iteration}, we have that $\widehat{\T}^{t-1}\Psi$ satisfies \eqref{eq_Lipschitz_2_thm_1}, i.e., is $\alpha$-H\"older continuous with corresponding constant $L_{\Psi,T-t+1}=2^{t-1} L_{\Psi} \prod_{u=T-t+1}^{T-1} \max\left\{L_{\mathcal{A},u}^{\alpha}+L_{\widehat{\PP},u}^{\alpha},1\right\}$ (using that $L_{\mathcal{P},u}=L_{\widehat{\PP},u}$, as here  $\mathcal{P}_u:=\widehat{\mathbb{P}}_u$).
In addition, let $\Pi_{T-t}(d\omega_{T-t+1},d\widetilde\omega_{T-t+1})\in \mathcal{M}_1(\Omloc\times \Omloc)$ be the optimal coupling of $\PP^{\rm{TR}}_{T-t}(\omega^{T-t})$ and $\widehat{\PP}_{T-t}(\omega^{T-t})$ with respect to $d_{W_1}(\cdot,\cdot)$. Then, using Jensen's inequality, we derive that
\begin{equation}
	\begin{split}
		\label{eq:upper-bound-recursion-timet-Step2b-1}
		&\Big|(\T^{\rm{TR}}_{T-t}\circ\widehat{\T}^{t-1}\Psi)(\omega^{T-t},a^{T-t})
		-(\widehat{\T}_{T-t}\circ\widehat{\T}^{t-1}\Psi)(\omega^{T-t},a^{T-t})\Big| \\
		& \leq 
		\sup_{a \in \mathcal{A}_{T-t}(\omega^{T-t})}
		\int_{\Omloc \times \Omloc} \Big|
		\widehat{\T}^{t-1}\Psi\big(\omega^{T-t}\otimes_{T-t}\omega_{T-t+1},(a^{T-t},a)\big)\\
		& \qquad \qquad \qquad \qquad \qquad \qquad-
		\widehat{\T}^{t-1}\Psi\big(\omega^{T-t}\otimes_{T-t}\widetilde{\omega}_{T-t+1},(a^{T-t},a)\big)
		\Big| \, \Pi_{T-t}(d\omega_{T-t+1},d\widetilde\omega_{T-t+1})\\
		&\leq
		 L_{\Psi,T-t+1}
		\int_{\Omloc \times \Omloc}
		\Vert\omega_{T-t+1}-\widetilde\omega_{T-t+1}\Vert^\alpha
		\, \Pi_{T-t}(d\omega_{T-t+1},d\widetilde\omega_{T-t+1})\\
		&\leq 
		L_{\Psi,T-t+1} \big(d_{W_1}\big(\PP^{\rm{TR}}_{T-t}(\omega^{T-t}),\widehat{\PP}_{T-t}(\omega^{T-t})\big)\big)^\alpha.
	\end{split}
\end{equation}
Moreover, note that by the definition in \eqref{eq:def:err-alpha}
\begin{equation}\label{def-quantity-UB-inducStep}
	\big(d_{W_1}\big(\PP^{\rm{TR}}_{T-t}(\omega^{T-t}),\widehat{\PP}_{T-t}(\omega^{T-t})\big)\big)^\alpha = \mu^{\rm{err},\alpha}_{T-t,T-t}(\omega^{T-t}).
\end{equation}
This, \eqref{eq:upper-bound-recursion-timet-Step2b-1}, and the definition of $L_{\Psi,T-t+1}$ hence show that
\begin{equation}
		\label{eq:upper-bound-recursion-timet-Step2b-2}
	\begin{split}
&\Big|(\T^{\rm{TR}}_{T-t}\circ\widehat{\T}^{t-1}\Psi)(\omega^{T-t},a^{T-t})
-(\widehat{\T}_{T-t}\circ\widehat{\T}^{t-1}\Psi)(\omega^{T-t},a^{T-t})\Big|\\
	&\leq 
	2^{t-1} L_{\Psi} \prod_{u=T-t+1}^{T-1} \max\left\{L_{\mathcal{A},u}^{\alpha}+L_{\widehat{\PP},u}^{\alpha},1\right\}
		\mu^{\rm{err},\alpha}_{T-t,T-t}(\omega^{T-t}).
		\end{split}
\end{equation}
Therefore, combining \eqref{eq:upper-bound-recursion-timet-Step1}, \eqref{eq:upper-bound-recursion-timet-Step2a}, and \eqref{eq:upper-bound-recursion-timet-Step2b-2} demonstrates that indeed for any $(\omega^{T-t},a^{T-t})\in \Omega^{T-t}\times \mathcal{A}^{T-t}$
\begin{equation*}
	\begin{split}
&\Big|(\T^{\rm{TR},t}\Psi)(\omega^{T-t},a^{T-t})-(\widehat{\T}^t\Psi)(\omega^{T-t},a^{T-t})\Big|\\
& \leq 
L_{\Psi} \sum_{s={T-t+1}}^{T-1} \bigg[
\Big(2^{T-(s+1)}  \prod_{u=s+1}^{T-1} \max\left\{L_{\mathcal{A},u}^{\alpha}+L_{\widehat{\PP},u}^{\alpha},1\right\}\Big)
\cdot
\mu^{\rm{err},\alpha}_{s,T-t}(\omega^{T-t}) \bigg]\\
& \quad +
	L_{\Psi} 2^{t-1}  \prod_{u=T-t+1}^{T-1} \max\left\{L_{\mathcal{A},u}^{\alpha}+L_{\widehat{\PP},u}^{\alpha},1\right\}
\mu^{\rm{err},\alpha}_{T-t,T-t}(\omega^{T-t})\\
&=
L_{\Psi} \sum_{s={T-t}}^{T-1} \bigg[
\Big(2^{T-(s+1)}  \prod_{u=s+1}^{T-1} \max\left\{L_{\mathcal{A},u}^{\alpha}+L_{\widehat{\PP},u}^{\alpha},1\right\}\Big)
\cdot
\mu^{\rm{err},\alpha}_{s,T-t}(\omega^{T-t}) \bigg].
		\end{split}
\end{equation*} 
Hence we have proven \eqref{eq:upper-bound-recursion}. 
We can now deduce (1) directly from \eqref{eq:backward-iteration-identity} together with \eqref{eq:upper-bound-recursion} for $t=T$.
%
%

To deduce (2), first note that  since by assumption
$\PP^{\rm{TR}}_t(\omega^t)\in \mathcal{B}_{\varepsilon_t(\omega^t)}^{(q)}\big(\widehat{\PP}_t(\omega^t)\big)$ holds for each $\omega^t \in \Omega^t$ and $t=1,\dots.T-1$, as well as that
$\PP^{\rm{TR}}_0\in \mathcal{B}_{\varepsilon_0}^{(q)}\big(\widehat{\PP}_0\big)$, we immediately get that $0\leq \mathcal{V}^{\rm{TR}}-\mathcal{V}$.

Now we define for each $t=T-1,\dots,1,0$ the 
operator
 $\T^{WC}_t:C^\alpha(\Omega^{t+1} \times \mathcal{A}^{t+1}) \to C^\alpha(\Omega^t \times \mathcal{A}^t)$
by setting for any
$v \in C^\alpha(\Omega^{t+1} \times \mathcal{A}^{t+1})$ 
and $(\omega^{t},a^{t})\in \Omega^t \times \mathcal{A}^t$
\begin{equation}
	\label{eq:def:worstcaseT}
(\T^{WC}_tv)(\omega^{t},a^{t}):= \sup_{a \in \mathcal{A}_t(\omega^t)} \E_{\widetilde{\PP}_t^*(\omega^t,(a^{t},a))}\big[v\big(\omega^t \otimes_t \cdot, (a^t,a)\big)\big], 
\end{equation}
where 
$\Omega^t \times \mathcal{A}^{t+1} \ni (\omega^t,(a^{t},a)) \mapsto  \widetilde{\PP}_t^*(\omega^t,(a^{t},a)) \in  \mathcal{P}_t(\omega^t)$  is the local worst-case measure at time~$t$ defined in \eqref{eq_thm_iip_def} from the dynamic programming principle in Theorem~\ref{thm_main_result}. Moreover, for each $t=1,\dots,T$ we define the operator $\T^{WC,t}:C^\alpha(\Omega^{T} \times \mathcal{A}^{T}) \to C^\alpha(\Omega^{T-t} \times \mathcal{A}^{T-t})$
by setting for any
$v \in C^\alpha(\Omega^{T} \times \mathcal{A}^{T})$ 
\begin{equation}
		\label{eq:def:worstcaseT-global}
\T^{WC,t}v:=	\T^{WC}_{T-t} \circ \dots \circ \T^{WC}_{T-1} v. 
\end{equation}
By definition of the operators \eqref{eq:def:worstcaseT} and \eqref{eq:def:worstcaseT-global} together with the dynamic programming principle in Theorem~\ref{thm_main_result} we see that $\mathcal{V}=\T^{WC,T}\Psi$. As a consequence, we can follow the exact same arguments as for the proof of (1) line by line, but with 
$\widehat{\PP}_t \leftarrow \widetilde{\PP}_t^*$,
 $\widehat{\T}_t \leftarrow \T^{WC}_t$,
$ \widehat{\T}^{t} \leftarrow \T^{WC,t}$,
$L_{\widehat{\mathbb{P}},t} \leftarrow L_{\widehat{\mathbb{P}},t} + L_{\varepsilon,t}$ 
(since by  \eqref{lem_parameter-ball-correspondence}, we have here $L_{\mathcal{P},t}= L_{\widehat{\mathbb{P}},t} + L_{\varepsilon,t}$),
 as well as noticing that due to the assumption that $\PP^{\rm{TR}}_t(\omega^t) \in\mathcal{B}_{\varepsilon_t}^{(q)} \big(\widehat{\PP}_t(\omega^t)\big)$
 we have in \eqref{def-quantity-UB-inducStart} and \eqref{def-quantity-UB-inducStep} 
 in this case by the definition in \eqref{eq:def:epsilon-alpha} for any 
$(\omega^{T-t},(a^{T-t},a))\in \Omega^{T-t}\times \cA^{T-t+1}$ that 
\begin{equation*}
\big(d_{W_1}\big(\PP^{\rm{TR}}_{T-t}(\omega^{T-t}),\widetilde{\PP}^*_{T-t}(\omega^{T-t},(a^{T-t},a))\big)\big)^\alpha
\leq
 2^\alpha\big(\varepsilon_{T-t}(\omega^{T-t})\big)^\alpha
  = 2^\alpha \mu^{\varepsilon,\alpha}_{T-t,T-t}(\omega^{T-t}).
\end{equation*}
This allows us to conclude (2).

Finally, to see that (3) holds, we first note that since for each $t=1,\dots, T-1$ and $\omega^t\in \Omega^t$, $\PP^{\rm{TR}}_t(\omega^t)=\PP_{\theta^{\rm{TR}}_t(\omega^t)}$ holds   for some
	$\theta^{\rm{TR}}_t: \Omega^t \rightarrow \Theta_t$
	satisfying for each $\omega^t\in \Omega^t$ that 
	$\| \theta^{\rm{TR}}_t(\omega^t) - \widehat{\theta}_t(\omega^t)\| \leq \varepsilon_t(\omega^t)$, and that $\PP^{\rm{TR}}_0=\PP_{\theta^{\rm{TR}}_0}$ holds for some $\theta^{\rm{TR}}_0 \in \Theta_0$ satisfying 
	$\| \theta^{\rm{TR}}_0 - \widehat{\theta}_0\| \leq \varepsilon_0$, 
	 we immediately get that $0\leq \mathcal{V}^{\rm{TR}}-\mathcal{V}$.

Now for the upper bound, we can re-use for each $t=T-1,\dots,1,0$ the  operator
$\T^{WC}_t:C^\alpha(\Omega^{t+1} \times \mathcal{A}^{t+1}) \to C^\alpha(\Omega^t \times \mathcal{A}^t)$ as well as for each $t=1,\dots,T$ the operator
$\T^{WC,t}:C^\alpha(\Omega^{T} \times \mathcal{A}^{T}) \to C^\alpha(\Omega^{T-t} \times \mathcal{A}^{T-t})$ introduced
in the proof of (2) above and then follow the exact same arguments as for the proof of (1) line by line, 
but with 
$\widehat{\PP}_t \leftarrow \widetilde{\PP}_t^*$,
$\widehat{\T}_t \leftarrow \T^{WC}_t$,
$ \widehat{\T}^{t} \leftarrow \T^{WC,t}$,
$L_{\widehat{\mathbb{P}},t} \leftarrow L_{\PP_{{\theta}},t} \cdot (L_{\widehat{\theta},t} + L_{\varepsilon,t})$ 
(since by  \eqref{eq:Asu_2.3(iii)_parameter}, we have here $L_{\mathcal{P},t}:=L_{\PP_{{\theta}},t} \cdot (L_{\widehat{\theta},t} + L_{\varepsilon,t})$\,),
as well as noticing that due to the assumption that $\PP^{\rm{TR}}_t(\omega^t)=\PP_{\theta^{\rm{TR}}_t(\omega^t)}$ holds   for some
$\theta^{\rm{TR}}_t: \Omega^t \rightarrow \Theta_t$
satisfying for each $\omega^t\in \Omega^t$ that 
$\| \theta^{\rm{TR}}_t(\omega^t) - \widehat{\theta}_t(\omega^t)\| \leq \varepsilon_t(\omega^t)$,
as well as that by the property of the worst-case measure $\widetilde{\PP}_t^*$ defined in \eqref{eq_thm_iip_def} satisfying $\widetilde{\PP}_t^*(\omega^t,a^{t+1}) \in \mathcal{P}_t(\omega^t)$ there exists a map $\theta^{*}_t:\Omega^t\times \cA^{t+1} \to \Theta_t$ such that  $\|\theta^*_t(\omega^t,a^{t+1})-\widehat{\theta}_t(\omega^t)\| \leq \varepsilon_t(\omega^t)$ and $\widetilde{\PP}_t^*(\omega^t,a^{t+1})=\PP_{\theta^*_t(\omega^t,a^{t+1})}$ 
hold for every $(\omega^t,a^{t+1})\in \Omega^t \times \cA^{t+1}$,
we have in
\eqref{def-quantity-UB-inducStart} and \eqref{def-quantity-UB-inducStep} 
in this case for any 
$(\omega^{T-t},(a^{T-t},a))\in \Omega^{T-t}\times \cA^{T-t+1}$ that  by \ref{eq_a2}
\begin{equation}
	\begin{split}
	\big(d_{W_1}\big(\PP^{\rm{TR}}_{T-t}(\omega^{T-t}),\widetilde{\PP}^*_{T-t}(\omega^{T-t},(a^{T-t},a))\big)\big)^\alpha
	&\leq
	\big(d_{W_1}\big(\PP^{\rm{TR}}_{\theta^{\rm{TR}}_{T-t}(\omega^{T-t})},\PP_{\theta^*_{T-t}(\omega^{T-t},(a^{T-t},a))}\big)\big)^\alpha\\
	&\leq 
	L_{\PP_{\theta},T-t}^\alpha \cdot \|\theta^{\rm{TR}}_{T-t}(\omega^{T-t})-\theta^*_{T-t}(\omega^{T-t},(a^{T-t},a)) \|^\alpha\\
&\leq	2^\alpha L_{\PP_{\theta},T-t}^\alpha\cdot \big(\varepsilon_{T-t}(\omega^{t-t})\big)^\alpha\\
	&= 2^\alpha L_{\PP_{\theta},T-t}^\alpha \cdot \mu^{\varepsilon,\alpha}_{T-t,T-t}(\omega^{T-t}).
	\end{split}
\end{equation}
This allows us to conclude (3).
\end{proof}
}
%
%

\subsection{{Proofs and additional results of Section~\ref{sec_applications}}}

\begin{proof}[{Proof of Proposition~\ref{prop_assumptions_exa1}}]
To verify Assumption~\ref{asu_P}, we aim to apply Theorem~\ref{thm_wasserstein_ambiguity} with $p=0,q=1$. This means we need to show that for all $\omega^t, \widetilde{\omega}^t \in \Omega^t$ there exists some $L_B>0$ such that we have $\operatorname{d}_{W_1}(\widehat{\PP}_t(\omega^t) , \widehat{\PP}_t(\widetilde{\omega}^t)  ) \leq L_B \cdot \left(\sum_{i=1}^t \|\omega_i- \widetilde{\omega}_i\|\right)$. First, for any function $f: \Omega^t \rightarrow \R$, we define the quantity 
$
\|f\|_{\operatorname{Lip}} :=\sup_{\omega^t \neq \widetilde{\omega}^t} \frac{|f(\omega^t)-f(\widetilde{\omega}^t)|}{\sum_{i=1}^t \|\omega_i-\widetilde{\omega_i}\| }.
$
Note that $f: \Omega^t \rightarrow \R$ is Lipschitz-continuous if and only if $\|f\|_{\operatorname{Lip}}  < \infty$.
 Next, note that by construction $\Omega^t \ni \omega^t \mapsto \pi_s(\omega^t)$ is Lipschitz-continuous for all $s=t,\cdots,N-1$ with  Lipschitz constant $L_{\pi_s}:=\|\pi_s\|_{\operatorname{Lip}}$ since the partial derivatives of $\pi_s$ exist and are bounded on $\Omega^t$.  We use this observation to define $L_{\pi}:=\max_{s=t,\dots,N-1} L_{\pi_s}$. Then, we apply the \emph{Kantorovich--Rubinstein duality} (see, e.g. \cite[Remark 6.5]{villani2009optimal}) to compute
\begin{align*}
\operatorname{d}_{W_1}(\widehat{\PP}_t(\omega^t) , \widehat{\PP}_t(\widetilde{\omega}^t)  ) &= \sup_{\substack{f: \Omega^t \rightarrow \R, \\ \|f \|_{\operatorname{Lip} \leq 1}}} \left\{\int_{\Omega^t} f(x)\widehat{\PP}_t(\omega^t; dx) -   \int_{\Omega^t} f(x)\widehat{\PP}_t(\widetilde{\omega}^t; dx) \right\}\\
&= \sup_{\substack{f: \Omega^t \rightarrow \R, \\ \|f \|_{\operatorname{Lip} \leq 1}}} \left\{ \sum_{s=t}^{N-1} f(\mathcal{R}_{s+1}) \pi_s(\omega^t) -    \sum_{s=t}^{N-1} f(\mathcal{R}_{s+1}) \pi_s(\widetilde{\omega}^t) \right\}\\
&= \sup_{\substack{f: \Omega^t \rightarrow \R, \\ \|f \|_{\operatorname{Lip} \leq 1}}} \bigg\{ \sum_{s=t}^{N-1} \left(f(\mathcal{R}_{s+1})-f(0)\right)\left( \pi_s(\omega^t) -\pi_s(\widetilde{\omega}^t)  \right) +   f(0) \sum_{s=t}^{N-1} \left( \pi_s(\omega^t) -\pi_s(\widetilde{\omega}^t) \right) \bigg\}\\
&= \sup_{\substack{f: \Omega^t \rightarrow \R, \\ \|f \|_{\operatorname{Lip} \leq 1}}} \bigg\{ \sum_{s=t}^{N-1} \left(f(\mathcal{R}_{s+1})-f(0)\right)\left( \pi_s(\omega^t) -\pi_s(\widetilde{\omega}^t)  \right)\bigg\} \\
&\leq  \sup_{\substack{f: \Omega^t \rightarrow \R, \\ \|f \|_{\operatorname{Lip} \leq 1}}} \bigg\{ \sum_{s=t}^{N-1} |\mathcal{R}_{s+1}| L_{\pi} \cdot \left(\sum_{i=1}^t \|\omega_i- \widetilde{\omega}_i\|\right)\bigg\} \\
&\leq (N-t) \cdot C \cdot  L_{\pi} \cdot \left(\sum_{i=1}^t \|\omega_i- \widetilde{\omega}_i\|\right).
\end{align*}

{ Analogue, using the dual representation for $\operatorname{d}_{W_1}(\cdot, \cdot)$, we get
\begin{align*}
\operatorname{d}_{W_1}(\widehat{\PP}^{\rm{ada.}}(\omega^t) ,\widehat{\PP}^{\rm{ada.}}(\widetilde{\omega}^t)  ) 
&= \sup_{\substack{f: \Omega^t \rightarrow \R, \\ \|f \|_{\operatorname{Lip} \leq 1}}} \left\{  \frac{1}{N+t}\sum_{i=1}^{t} \left(f(\omega_i)  -    f(\widetilde{\omega}_i)\right)\right\}\\
&\leq \frac{1}{N+t} \left(\sum_{i=1}^t \|\omega_i- \widetilde{\omega}_i\|\right).
\end{align*}

Hence, according to Theorem~\ref{thm_wasserstein_ambiguity},  Assumption~\ref{asu_P} is fulfilled for both, the ambiguity sets defined in  \eqref{eq_P_hedging} and the ambiguity sets defined in \eqref{eq_P_hedging_adaptive}.
}

Next, to verify Assumption~\ref{asu_psi}, note first that $\Omloc = {[-C,C]^d}$ is compact and $p=0$. Hence, Assumption~\ref{asu_psi}~(ii) follows directly as $(\omega,a) \mapsto \Psi(\omega,a)$ defined in \eqref{eq_Psi_hedging} is continuous. Therefore, we only need to verify Assumption~\ref{asu_psi}~(i). 
{To this end, we first note that 
\begin{equation}\label{eq_hedge_payoff_proof_1}
\Omega \times \mathcal{A}^T \ni \left((\omega_1,\dots,\omega_T ),~\left(d_0,\Delta_0,\dots,\Delta_{T-1}\right)\right) \mapsto  d_0+\sum_{i=1}^d\sum_{j=0}^{T-1}   \Delta_j^i \left\{S_0^i \left(\prod_{k=1}^{j}(\omega_k^i+1)\right)\cdot \omega_{j+1}^i\right\}
\end{equation}
is continuously differentiable, and $\Omega \times \mathcal{A}^T$ is compact, hence \eqref{eq_hedge_payoff_proof_1} is Lipschitz continuous.
Moreover, the map
\begin{equation}\label{eq_hedge_payoff_proof_2}
	\Omega \ni (\omega_1,\dots,\omega_T) \mapsto \Big(S_0(\omega_1+1), S_0{\textstyle\prod\limits_{k=1}^{2}(\omega_k+1), \dots, S_0\prod\limits_{k=1}^{T}} (\omega_k+1)\Big)\in \R^{d \cdot T}
\end{equation}
is continuously differentiable, hence also Lipschitz continuous as $\Omega$ is compact.
Therefore, since by assumption the payoff function of the derivative $\Phi$ is $\beta$-Hölder-continuous and $\Omloc$ here is compact, we also have that the map $\Upsilon:\Omega \times \mathcal{A}^T \to \R$  defined for any $(\omega^T,a^T):=\left((\omega_1,\dots,\omega_T ),~\left(d_0,\Delta_0,\dots,\Delta_{T-1}\right)\right) \in \Omega \times \mathcal{A}^T$ by
\begin{equation}\label{eq_hedge_payoff_proof_3}
	\begin{split}
	&\Upsilon\left((\omega_1,\dots,\omega_T ),~\left(d_0,\Delta_0,\dots,\Delta_{T-1}\right)\right)\\
	&:=  d_0+\sum_{i=1}^d\sum_{j=0}^{T-1}   \Delta_j^i \left\{S_0^i \left(\prod_{k=1}^{j}(\omega_k^i+1)\right)\cdot \omega_{j+1}^i\right\}
	-\Phi \Big(S_0(\omega_1+1), S_0{\textstyle\prod\limits_{k=1}^{2}(\omega_k+1), \dots, S_0\prod\limits_{k=1}^{T}} (\omega_k+1)\Big)
\end{split}
\end{equation}
is $\beta$-H\"older continuous. Moreover, recall that $U$ defined in \eqref{eq_u_prospect} is $\mathfrak{a}$-H\"older continuous.
This and the fact that $\Psi(\omega,a)=-U(\Upsilon(\omega,a))$ holds for any 
$(\omega,a) \in \Omega \times \mathcal{A}^T$, see \eqref{eq_Psi_hedging}, imply that $\Psi$ is  $\alpha$-H\"older continuous with $\alpha:=\mathfrak{a} \cdot \beta\in (0,1)$.

}
\end{proof}
{
\begin{lem}\label{lem:epsilon_multidim}
Assume the setting described in Section~\ref{subsec_hedging}, and let $d \geq 2$. Then, we have for all $t = 0,1,\dots,T-1$ and for all $\alpha \in (0,1)$ that
\begin{equation}
\PP^* \left(  W_1(\widehat{\PP}^{\rm{ada.}}_t,\PP^*)\geq \widehat{\varepsilon}_t \right) \leq \alpha
\end{equation}
with \begin{align*}
\widetilde{\varepsilon}_t:= \frac{64}{3\alpha} \bigg[\gamma^*_t+(N+t)^{-1/2} \bigg( &(C\tfrac{\sqrt{d}}{2}-\gamma^*_t)+\log\left(C \tfrac{\sqrt{d}}{2\gamma^*_t}\right) 2C \sqrt{d} \lceil d/2 \rceil \\
&+  \sum_{k=2}^{\lceil d/2 \rceil} {{\lceil d/2 \rceil} \choose k } (2C \sqrt{d})^k \bigg(\tfrac{\left(C\tfrac{\sqrt{d}}{2}\right)^{1-k}-{\gamma^*_t}^{1-k}}{1-k}\bigg)\bigg)\bigg]
\end{align*}
and $\gamma^*_t = \frac{2C \sqrt{d}}{(N+t)^{1/(2 \lceil d/2 \rceil)}-1}$, where $\PP^*$ denotes the true but unknown distribution of the $d$-dimensional return $\omega_{t+1} \in \Omloc$.
\end{lem}
\begin{proof}
Let $t \in \{0,1,\dots,T-1\}$. We apply \cite[Theorem 1.1]{boissard2014mean} which gives 
\begin{align*}
\E_{\PP^*}\left[W_1(\widehat{\PP}^{\rm{ada.}}_t,\PP^*)\right] \leq  \frac{64}{3} \left(\gamma+{(N+t)}^{-1/2} \int_a^{\Delta_{\Omloc}/4} N(\Omloc,\delta)^{1/2}\, \D \delta \right)
\end{align*}
for all $\gamma > 0 $, where $\Delta_{\Omloc} = 2C\sqrt{d}$ denotes the diameter of $\Omloc = [-C,C]^d$ and $N(\Omloc,\delta) \leq \left(\frac{2C\sqrt{d}}{\delta}+1\right)^d$ denotes the covering number of $\Omloc$, i.e., the minimal $n \in \N$ such that there exist $x_1,\dots,x_n \in \Omloc$ with $\Omloc \subset \cup_{i=1}^n \{x \in \R^d~|~ \|x-x_i|| < \delta\}$. Using the explicit expressions for covering number and diameter gives 
\begin{align*}
\E_{\PP^*}\left[W_1(\widehat{\PP}^{\rm{ada.}}_t,\PP^*))\right] &\leq  \frac{64}{3} \left(\gamma+{(N+t)}^{-1/2} \int_\gamma^{\frac{{C \sqrt{d}}}{2}} \left(\tfrac{2C\sqrt{d}}{\delta}+1\right)^{d/2} \D \delta \right) \\
 &\leq  \frac{64}{3} \left(\gamma+{(N+t)}^{-1/2} \int_\gamma^{\frac{{C \sqrt{d}}}{2}} \left(\tfrac{2C\sqrt{d}}{\delta}+1\right)^{\lceil d/2 \rceil } \D \delta \right) \\
  &=  \frac{64}{3} \left(\gamma+{(N+t)}^{-1/2} \sum_{k=0}^{\lceil d/2 \rceil } \int_\gamma^{\frac{{C \sqrt{d}}}{2}} {{\lceil d/2 \rceil}  \choose k } \cdot  \left(\tfrac{2C\sqrt{d}}{\delta}\right)^{k} \D \delta \right) \\
    &=  \frac{64}{3} \bigg[\gamma+{(N+t)}^{-1/2} \bigg(\Big(C\tfrac{\sqrt{d}}{2}-\gamma\Big)+\log\left(\tfrac{C\sqrt{d}}{2\gamma}\right)2C \sqrt{d} {\lceil d/2 \rceil} \\
    &\hspace{2cm}+\sum_{k=2}^{\lceil d/2 \rceil} {{\lceil d/2 \rceil}  \choose k } (2C \sqrt{d})^k \bigg(\frac{\left(C\tfrac{\sqrt{d}}{2}\right)^{1-k}-\gamma^{1-k}}{1-k}\bigg)  \bigg)\bigg] =: f(\gamma).
\end{align*}
To obtain the smallest possible upper bound, we minimize $f$ w.r.t.\,$\gamma$ which leads to the first order condition 
\[
f'(\gamma)=\frac{64}{3}\left(1-(N+t)^{-1/2}\left(\tfrac{2C\sqrt{d}}{\gamma}+1\right)^{\lceil d/2 \rceil} \right)= 0
\]
with solution $\gamma^*_t = \frac{2C \sqrt{d}}{(N+t)^{\frac{1}{2 \lceil d/2 \rceil}}-1}$ satisfying the second order condition $f''(\gamma^*_t)>0$. Setting $\widehat{\varepsilon_t}:= \frac{f(\gamma^*_t)}{\alpha}$ and applying the Markov inequality then concludes the proof by
\[
\PP^* \left(  W_1(\widehat{\PP}^{\rm{ada.}}_t,\PP^*)\geq \widehat{\varepsilon}_t \right) \leq \frac{\E_{\PP^*}\left[W_1(\widehat{\PP}^{\rm{ada.}}_t,\PP^*)\right]}{\frac{f(\gamma^*_t)}{\alpha}} \leq \frac{f(\gamma^*_t)}{ \frac{f(\gamma^*_t)}{\alpha}} = \alpha.
\]
\end{proof}
}
\section*{Acknowledgments}
\noindent
A. Neufeld gratefully acknowledges financial support by 
the MOE AcRF Tier 2 Grant \textit{MOE-T2EP20222-0013}
and
the Nanyang Assistant Professorship Grant (NAP Grant) \textit{Machine Learning based Algorithms in Finance and Insurance}, J. Sester is grateful for financial support
 by the NUS Start-Up Grant \emph{Tackling model uncertainty in Finance with machine learning}. 
\bibliographystyle{plain} 
\bibliography{literature}
\end{document}